\theoremstyle{plain}
\newtheorem{theorem}{Theorem}
\newtheorem{corollary}[theorem]{Corollary}
\newtheorem{lemma}[theorem]{Lemma}
\newtheorem{proposition}[theorem]{Proposition}
\theoremstyle{definition}
\newtheorem{example}[theorem]{Example}
\newtheorem{rem}[theorem]{Remark}
\def \Z{{\mathbb Z}}
\def \R{{\mathbb R}}
\def \s{\underset{R}{\sim}^*}
\title[the higher derivatives of the Barnes zeta function]{On values of the higher derivatives of the Barnes zeta function at non-positive integers}
\author{Shinpei Sakane and Miho Aoki}
\date{\today}
\address{Department of Information Systems Design and Data Science,
Major in Science and Engineering,
Graduate School of Natural Science and Technology,
Shimane University,
1060, Nishikawatsu, Matsue, Shimane, 690-8504, Japan }
\email{n21m105@matsu.shimane-u.ac.jp}
\address{Department of Mathematics,
Interdisciplinary Faculty of Science and Engineering,
Shimane University,
1060, Nishikawatsu, Matsue, Shimane, 690-8504, Japan
}
\email{aoki@riko.shimane-u.ac.jp }
\thanks{This work was supported by JSPS KAKENHI Grant Number JP21K03181}
\begin{document}

\maketitle

\begin{abstract}
Let $x$ be a complex number which has a positive real part, and $w_1,\ldots,w_N$ be positive rational numbers.
We show that $w^s \zeta_N (s, x \ |\ w_1,\ldots, w_N)$ can be expressed as a finite linear combination of the
Hurwitz zeta functions over $\mathbb Q(x)$, where $\zeta_N (s,x \ |\ w_1,\ldots, w_N)$ is the
Barnes zeta function and $w$ is a positive rational number explicitly determined by $w_1,\ldots, w_N$.
Furthermore, we give generalizations of Kummer's formula on the gamma function
and Koyama-Kurokawa's formulae on the multiple gamma functions, and an explicit formula
for the values at non-positive integers for higher order derivatives of the Barnes zeta function
in the case that $x$ is a positive rational number,
involving the generalized Stieltjes constants and the values at positive integers of the Riemann zeta function.
Our formulae also makes it possible to calculate an approximation in the case that $w_1, \ldots, w_N$ and $x$ are  positive real numbers.
\\ \\
{\bf Keywords} : Barnes zeta function, Hurwitz zeta function, Riemann zeta function, special values of zeta functions, multiple gamma function.
\\
{\bf 2010 Mathematics Subject Classification}: 11M41 (primary), 11M06, 11M35, 33B15 (secondary).
\end{abstract}

\section{Introduction}\label{sec:Intro}
The Barnes zeta function \cite{B1,B2} is defined by the multiple Dirichlet series:
\[
\zeta_N (s, x\  |\ w_1,\ldots,w_N) := \sum_{n_1=0}^{\infty} \cdots \sum_{n_N=0}^{\infty}
\frac{1}{(x+n_1w_1+\cdots +n_Nw_N)^s}, \hspace{0.073cm} {\rm Re}(s)>N,
\]
where $N$ is a positive integer and $x, w_1, \ldots, w_N$ are fixed complex numbers which have positive
real parts.
The series on the right converges absolutely for ${\rm Re}(s) >N$, and the function of $s$ has a meromorphic continuation to the
entire complex plane with simple poles at $s=1,2,\ldots,N$. For $N=1$, we have
$\zeta_1(s,x \ | \ w)=w^{-s} \zeta(s,x/w)$ where
\[
\zeta (s,x):= \sum_{n=0}^{\infty}(n+x)^{-s}, \quad {\rm Re}(s)>1
\]
is the Hurwitz zeta function.

The Barnes zeta function can be considered as a generalization of the multiple Hurwitz zeta function:
\[
\zeta_N (s,x) :=\zeta_N(s,x \ |\ \underbrace{1,\ldots,1}_{N})
\]
to general periods $w_1,\ldots,w_N$.
Koyama-Kurokawa \cite{KK} showed that $\zeta_N (s,x)$ with $N=2,3$ can be expressed by the Hurwitz zeta
functions :
\begin{align*}
\zeta_2(s,x) &= (1-x)\zeta (s,x)+\zeta (s-1,x), \\
\zeta_3(s,x) &= \frac{1}{2} ( (x^2-3x+2)\zeta (s,x) +(3-2x)\zeta(s-1,x)+\zeta(s-2,x)).
\end{align*}
We first give similar formulae for general $N\in \mathbb Z_{\geq 2}$  in  \S \ref{sec:BH}.

Let
\[
V(x):= \langle  \{ \zeta (s-k,y) \ |\ k\in \mathbb Z_{ \geq 0},  \ y\in \mathbb Q(x),\ {\rm Re}(y) >0 \} \rangle_{\mathbb Q (x)}
\]
be the vector space over the field $\mathbb Q(x)$ generated by the Hurwitz zeta functions.
We next show in \S \ref{sec:BH} that $w^s \zeta_N(s,x \ |\ w_1,\ldots, w_N ) \in V(x)$ for any positive rational numbers
\[
w_1=\frac{r_1}{q_1}, \ \ldots, \ w_N=\frac{r_N}{q_N}
\quad  (r_i,q_i \in \mathbb Z_{>0}, \ {\rm gcd}(r_i,q_i)=1 \ \text{for} \ i=1,\ldots ,N)
\] and $w:={\rm lcm}(r_1,\ldots,r_N)/ {\rm gcd}(q_1,\ldots,q_N)$.

In 1847, Kummer~\cite{K} showed the following idenity:
\begin{align}
  \log \Gamma(x) &= \frac{1}{\pi} \sum_{n=1}^{\infty} \frac{(\log n)\sin(2\pi nx)}{n} + \frac{\log(2\pi)
   + \gamma}{\pi} \sum_{n=1}^{\infty} \frac{\sin(2\pi nx)}{n}  \label{eq:Kummer} \\
  &\quad + \frac{1}{2}\sum_{n=1}^{\infty}\frac{\cos(2\pi nx)}{n} + \frac{1}{2} \log(2\pi) \notag
\end{align}
for $0 < x < 1$.
Koyama and Kurokawa \cite{KK} generalized Kummer's formula on  the gamma function to the multiple gamma function
$\Gamma_N(x):=$exp$(\zeta_N'(0,x))$
with $N=2,3$
by using some formulae for
$\zeta_N'(-k,x)$ for $k\in \mathbb Z_{\geq 1}$:
\begin{align}
  \log \Gamma_2(x) &= -\frac{1}{2\pi^2} \sum_{n=1}^{\infty} \frac{(\log n)\cos(2\pi nx)}{n^2}
   - \frac{\log(2\pi) + \gamma-1}{2\pi^2} \sum_{n=1}^{\infty} \frac{\cos(2\pi nx)}{n^2}  \label{eq:KK1} \\
  &\quad + \frac{1}{4\pi }\sum_{n=1}^{\infty}\frac{\sin(2\pi nx)}{n^2} + (1-x) \log \Gamma_1(x), \notag  \\
   \log \Gamma_3(x) &= -\frac{1}{4\pi^3} \sum_{n=1}^{\infty} \frac{(\log n)\sin(2\pi nx)}{n^3} - \frac{2\log(2\pi)
   + 2\gamma-3}{8\pi^3} \sum_{n=1}^{\infty} \frac{\sin(2\pi nx)}{n^3}  \label{eq:KK2} \\
  &\quad - \frac{1}{8\pi^2 }\sum_{n=1}^{\infty}\frac{\cos(2\pi nx)}{n^3} + \left(\frac{3}{2}-x \right) \log \Gamma_2(x) -\frac{(x-1)^2}{2}\log \Gamma_1(x) , \notag
\end{align}
for $0 < x \leq 1$ (these two equations  hold for $x=1$).
In \S \ref{sec:Kummer}, we  follow their method and give the fomulae for $\Gamma_N(x)$ with $N\in \mathbb Z_{\geq 2}$.
Furthermore, we explain that a similar  formula can be obtained for the multiple gamma function
$\Gamma_N(x|w_1,\ldots,w_N):=$exp$(\zeta_N'(0,x |w_1,\ldots,w_N))$ for positive rational numbers $w_1,\ldots,w_N$ and a positive real numer $x $.

 In \S \ref{sec:val}, we give an explicit formula for
the values at non-positive integers $s$  of higher order derivatives of $\zeta_N(s,x \ |\ w_1,\ldots,w_N)$ for positive
rational numbers $w_1,\ldots,w_N$ and $x$,
which  involves the values at  the positive integers of the Riemann zeta function
  and  the generalized Stieltjes constants.

\section{  Barnes zeta function \and Hurwitz zeta function}\label{sec:BH}

In this section,
we show   $w^s \zeta_N(s,x \ |\ w_1,\ldots, w_N ) \in V(x)$ for any positive rational numbers
\[
w_1=\frac{r_1}{q_1}, \ \ldots, \ w_N=\frac{r_N}{q_N}
\quad  (r_i,q_i \in \mathbb Z_{>0}, \ {\rm gcd}(r_i,q_i)=1 \ \text{for} \ i=1,\ldots ,N)
\] and $w:={\rm lcm}(r_1,\ldots,r_N)/ {\rm gcd}(q_1,\ldots,q_N)$.
\begin{lemma}\label{lem:BZ}
Let $w_1=r_1/q_1,\ldots, w_N =r_N/q_N  \  (r_i,q_i \in \mathbb Z_{ >0}$ for $i=1,2, \ldots, N )$  be  positive  rational numbers
$({\rm gcd}(r_i,q_i)=1$ is not necessary$)$. For any common multiple $q$ of $q_1,\ldots,q_N$ and any common multiple $\ell$ of $qw_1,
\ldots, qw_N \ (\in \mathbb Z)$, we have
\begin{align*}
  &\zeta_N(s,x \ |\ w_1,\ldots, w_N) \\
  &= \left(\frac{q}{\ell}\right)^s \sum_{k_1=0}^{\ell_1-1} \cdots \sum_{k_N=0}^{\ell_N-1}
  \zeta_N \left( s, \frac{q}{\ell} (x+k_1w_1+\cdots+ k_N w_N) \right),
\end{align*}
where $\ell_1:=\ell/(qw_1), \ldots, \ell_N:=\ell/(qw_N) \ (\in \mathbb Z)$.
\end{lemma}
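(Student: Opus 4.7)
The plan is to work in the half-plane $\mathrm{Re}(s)>N$ where the defining series converges absolutely, establish the identity at the level of series by a partitioning of the summation variables, and then extend to all $s\in\mathbb C$ by meromorphic continuation.

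First I would perform the change of variables $n_i = \ell_i m_i + k_i$ with $m_i\in\mathbb Z_{\ge 0}$ and $0\le k_i \le \ell_i-1$. The critical observation is the telescoping identity $\ell_i w_i = \ell/q$, which is independent of $i$; this is exactly why $\ell$ is chosen as a common multiple of the $qw_i$. Consequently,
\[
x + \sum_{i=1}^{N} n_i w_i \;=\; \Bigl(x + \sum_{i=1}^{N} k_i w_i\Bigr) + \frac{\ell}{q}\sum_{i=1}^{N} m_i
\;=\; \frac{\ell}{q}\Bigl(\tfrac{q}{\ell}\bigl(x + \textstyle\sum_{i} k_i w_i\bigr) + \sum_{i} m_i\Bigr),
\]
so raising to the power $-s$ pulls out the factor $(\ell/q)^{-s} = (q/\ell)^s$.

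Second I would interchange the finite sums over the $k_i$ with the infinite sums over the $m_i$ (legitimate by absolute convergence for $\mathrm{Re}(s)>N$) and recognize the remaining inner sum as the multiple Hurwitz zeta value
\[
\sum_{m_1,\ldots,m_N\ge 0} \Bigl(\tfrac{q}{\ell}(x+\textstyle\sum_i k_i w_i) + \sum_i m_i\Bigr)^{-s}
= \zeta_N\Bigl(s, \tfrac{q}{\ell}(x+\textstyle\sum_i k_i w_i)\Bigr),
\]
with the hypothesis $\mathrm{Re}(x)>0$ ensuring that the new first argument $\tfrac{q}{\ell}(x+\sum_i k_i w_i)$ has positive real part so the Hurwitz-type series is well-defined. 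Assembling the outer sum over $(k_1,\ldots,k_N)\in\prod_i\{0,\ldots,\ell_i-1\}$ gives the claimed identity for $\mathrm{Re}(s)>N$.

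Finally I would invoke that both sides of the asserted equality are meromorphic functions of $s$ on $\mathbb C$: the left side by the known meromorphic continuation of the Barnes zeta function, and the right side as a finite linear combination of multiple Hurwitz zeta functions. Since they agree on the nonempty open set $\mathrm{Re}(s)>N$, they agree everywhere by the identity theorem. The main (and essentially only) point requiring care is checking the indexing --- that $\ell_i=\ell/(qw_i)$ is indeed a positive integer (which follows from $\ell$ being a common multiple of $qw_1,\ldots,qw_N$) and that the bijection $n_i\leftrightarrow(m_i,k_i)$ exhausts $\mathbb Z_{\ge 0}$; no deeper obstacle arises.
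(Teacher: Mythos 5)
Your proposal is correct and follows essentially the same route as the paper: the paper partitions each summation index $n_i$ by its residue class $k_i$ modulo $\ell_i$, which is exactly your substitution $n_i=\ell_i m_i+k_i$, and both arguments hinge on the identity $\ell_i w_i=\ell/q$ being independent of $i$. The only cosmetic difference is that you spell out the final analytic-continuation step, which the paper leaves implicit.
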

\begin{proof}
For $s\in \mathbb C$ with ${\rm Re}(s) >N$, we have
\begin{align*}
& \zeta_N(s,x \ |\ w_1,\ldots, w_N ) \\
&=\sum_{n_1=0}^{\infty} \cdots \sum_{n_N=0}^{\infty} (x+n_1w_1 +\cdots +n_Nw_N)^{-s} \\
& =\left(\frac{q}{\ell}\right)^s  \sum_{n_1=0}^{\infty} \cdots \sum_{n_N=0}^{\infty} \left( \frac{q}{\ell}
(x+n_1w_1+\cdots +n_Nw_N) \right)^{-s} \\
&= \left(\frac{q}{\ell}\right)^s  \sum_{k_1=0}^{\ell_1-1} \cdots \sum_{k_N=0}^{\ell_N-1}  \sum_{n_1=0  \atop
n_1 \equiv k_1  \!\!\!\! \pmod{\ell_1}}^{\infty} \cdots \sum_{n_N=0 \atop n_N \equiv k_N \!\!\!\! \pmod{\ell_N}}^{\infty} \\
& \hspace{0.4cm}  \times \left\{ \frac{q}{\ell} \left( (x+k_1w_1+\cdots +k_Nw_N) +(n_1-k_1)w_1+\cdots  + (n_N-k_N)w_N\right) \right\}^{-s} \\
& =\left(\frac{q}{\ell}\right)^s  \sum_{k_1=0}^{\ell_1-1} \cdots \sum_{k_N=0}^{\ell_N-1}  \sum_{n_1=0
 }^{\infty} \cdots \sum_{n_N=0}^{\infty} \\
& \hspace{0.5cm} \times \left( \frac{q}{\ell} (x+k_1w_1+\cdots +k_Nw_N)
+n_1+\cdots+n_N \right)^{-s} \\
& =\left(\frac{q}{\ell}\right)^s  \sum_{k_1=0}^{\ell_1-1} \cdots \sum_{k_N=0}^{\ell_N-1}
\zeta_N \left( s,\frac{q}{\ell} (x+k_1w_1+\cdots+k_Nw_N) \right),
\end{align*}
and we get the assertion.
\end{proof}
\begin{proposition}\label{prop:BZ}
Let $w_1=r_1/q_1,\ldots, w_N =r_N/q_N  \  (r_i,q_i \in \mathbb Z_{ >0}$ for $i=1,2, \ldots, N )$  be  positive  rational numbers
with ${\rm gcd}(r_i,q_i)=1$ for $i=1,\ldots,N$  and put  \[
w:= {\rm lcm} (r_1,\ldots,r_N)/{\rm gcd}(q_1,\ldots,q_N) \ (\in \mathbb Q),
\]
$\ell_1:=w/w_1, \ \ldots,\ \ell_N := w/w_N$.
We have $\ell_1, \ldots , \ell_N \ \in \mathbb Z$ and
\begin{align*}
  &\zeta_N(s,x \ |\ w_1,\ldots, w_N) \\
  &= w^{-s} \sum_{k_1=0}^{\ell_1-1} \cdots \sum_{k_N=0}^{\ell_N-1}
  \zeta_N \left( s, w^{-1}(x+k_1w_1+\cdots+ k_N w_N) \right).
\end{align*}
\end{proposition}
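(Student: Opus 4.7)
The plan is to deduce Proposition~\ref{prop:BZ} as a direct specialization of Lemma~\ref{lem:BZ} with a carefully chosen pair $(q, \ell)$, after first verifying the arithmetic claim $\ell_i := w/w_i \in \mathbb{Z}$.

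For the integrality, I would write
\[
\frac{w}{w_i}
= \frac{\operatorname{lcm}(r_1,\ldots,r_N)}{\gcd(q_1,\ldots,q_N)} \cdot \frac{q_i}{r_i}
= \frac{\operatorname{lcm}(r_1,\ldots,r_N)}{r_i} \cdot \frac{q_i}{\gcd(q_1,\ldots,q_N)}.
\]
Since $r_i$ divides $\operatorname{lcm}(r_1,\ldots,r_N)$ and $\gcd(q_1,\ldots,q_N)$ divides $q_i$, both factors are positive integers, so $\ell_i \in \mathbb{Z}_{>0}$.

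Next, I would apply Lemma~\ref{lem:BZ} with the specific choice
\[
q := \operatorname{lcm}(q_1,\ldots,q_N), \qquad \ell := qw.
\]
Then $q$ is a common multiple of the $q_i$, so each $q w_i = q r_i/q_i$ is a positive integer. Moreover, because $\gcd(q_1,\ldots,q_N)$ divides $q$, the number $\ell = q \cdot \operatorname{lcm}(r_1,\ldots,r_N)/\gcd(q_1,\ldots,q_N)$ is a positive integer, and the identity $\ell/(q w_i) = w/w_i = \ell_i$ together with the first step shows that $\ell$ is a common multiple of $q w_1, \ldots, q w_N$. Hence the hypotheses of Lemma~\ref{lem:BZ} hold.

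Finally, I would observe that with these choices one has $q/\ell = 1/w$, so that $(q/\ell)^s = w^{-s}$, and the summation ranges in Lemma~\ref{lem:BZ} are exactly $0 \leq k_i \leq \ell_i - 1$. Substituting directly produces the claimed formula. The main subtlety, and essentially the only content beyond quoting the lemma, is the integrality verification in the first paragraph; once that is in place the rest is bookkeeping with $q$ and $\ell$.
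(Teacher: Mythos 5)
Your proof is correct, and it takes a genuinely different route from the paper's at the one point where there is real content. The paper sets $q:=\operatorname{lcm}(q_1,\ldots,q_N)$ and $\ell:=\operatorname{lcm}(qw_1,\ldots,qw_N)$, and then spends the entire proof establishing the identity $q/\ell=w^{-1}$ by comparing $\operatorname{ord}_p$ of both sides for every prime $p$, a case analysis that crucially invokes the hypothesis $\gcd(r_i,q_i)=1$. You instead exploit the fact that Lemma~\ref{lem:BZ} is stated for \emph{any} common multiple $\ell$ of the $qw_i$, not just the least one: you simply define $\ell:=qw$, check by the elementary factorization
\[
\frac{w}{w_i}=\frac{\operatorname{lcm}(r_1,\ldots,r_N)}{r_i}\cdot\frac{q_i}{\gcd(q_1,\ldots,q_N)}\in\mathbb Z_{>0}
\]
that $\ell$ is indeed a common multiple of the $qw_i$ (and that $\ell\in\mathbb Z_{>0}$), and then $q/\ell=w^{-1}$ holds by construction rather than by computation. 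This sidesteps the valuation argument entirely, and as a bonus your integrality check never uses $\gcd(r_i,q_i)=1$. What the paper's longer computation buys is the extra fact that $qw$ actually \emph{equals} $\operatorname{lcm}(qw_1,\ldots,qw_N)$, i.e.\ that $w$ is in a sense the optimal (smallest) scaling factor; but that refinement is not needed for the proposition as stated, so your shorter argument is a clean and complete proof.
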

\begin{proof}
We put $q:= {\rm lcm} (q_1,\ldots,q_N)$ and $\ell:= {\rm lcm} (qw_1,\ldots, qw_N)$. It is enough to show
$q/\ell = w^{-1}={\rm gcd}(q_1,\ldots,q_N)/{\rm lcm}(r_1,\ldots,r_N)$ from Lemma~\ref{lem:BZ}.
To show the equality, we check ${\rm ord}_p(\ell \  {\rm gcd}(q_1,\ldots,q_N))={\rm ord}_p(q\ {\rm lcm}(r_1,\ldots,r_N))$
 for any prime number $p$, where ${\rm ord}_p(a) \ (\in \mathbb Z)$ for $a\in \mathbb Q^{\times}$ is defined by
 \[
 a=p^{ {\rm ord}_p(a) } \frac{c}{b} ,\quad b,c\in \mathbb Z, \  p\nmid b, \ p\nmid c.
 \]

 Let $p$ be a prime number and $\nu\ (1\leq \nu\leq N)$ be an integer which satisfies
 $
 {\rm ord}_p(w_{\nu} )=\max{  \{ {\rm ord}_p(w_1),\ldots, {\rm ord}_p(w_N) \}  }$.
 Since $\ell ={\rm lcm}(qw_1,\ldots, qw_N)$, we have
 \begin{align}
 {\rm ord}_p(\ell)  & = \max{ \{ {\rm ord}_p(qw_1),\ldots, {\rm ord}_p(qw_N) \} } \notag \\
 & ={\rm ord}_p(q)+{\rm ord}_p(w_{\nu}). \label{eq:maxord}
 \end{align}

 (i) Consider the case $p|q_{\nu}$. From the assumption ${\rm gcd}(r_{\nu},q_{\nu})=1$,
 we have $p\nmid r_{\nu}$, and hence
 \[
 \max{ \{ {\rm ord}_p(w_1),\ldots, {\rm ord}_p(w_N) \} } ={\rm ord}_p(w_{\nu})=-{\rm ord}_p (q_{\nu}) <0.
 \]
 It concludes that ${\rm ord}_p(w_1)<0, \ldots, {\rm ord}_p (w_N)<0$, that is,
 $p \nmid r_1,\ldots , p\nmid r_N$ and $p |q_1,\ldots, p|q_N$.
 From these facts and (\ref{eq:maxord}), we have
 \begin{align*}
 {\rm ord}_p(w_{\nu}) & =\max{ \{ {\rm ord}_p(w_1),\ldots, {\rm ord}_p(w_N) \} } \\
 &=\max{ \{ - {\rm ord}_p(q_1),\ldots, -{\rm ord}_p(q_N) \} } \\
 &=-\min{ \{ {\rm ord}_p(q_1),\ldots, {\rm ord}_p(q_N) \} } \\
 &=- {\rm ord}_p( {\rm gcd}(q_1,\ldots, q_N) ),
 \end{align*}
 and
 \begin{align*}
 {\rm ord}_p (\ell \ {\rm gcd}(q_1,\ldots, q_N))  & ={\rm ord}_p(\ell) +{\rm ord}_p ( {\rm gcd}(q_1,\ldots,q_N)) \\
 & ={\rm ord}_p(q) +{\rm ord}_p(w_{\nu})+{\rm ord}_p({\rm gcd}(q_1,\ldots,q_N)) \\
 & ={\rm ord}_p(q) \\
 & ={\rm ord}_p(q) +{\rm ord}_p( {\rm lcm} (r_1,\ldots,r_N)) \\
 & ={\rm ord}_p (q\ {\rm lcm} (r_1,\ldots , r_N)).
 \end{align*}

 (ii) Consider the case $p\nmid q_{\nu}$. If $p |r_i$ for some $i$,  then $p\nmid q_i$ from ${\rm gcd} (r_i,q_i)=1$ and
 \begin{align*}
 {\rm ord}_p(r_i )  ={\rm ord}_p(w_i) & \leq \max{ \{ {\rm ord}_p(w_1),\ldots {\rm ord}_p(w_N) \} }\\
 & ={\rm ord}_p(w_{\nu}) ={\rm ord}_p (r_{\nu}).
 \end{align*}
 Therefore, we have
 \begin{align*}
   {\rm ord}_p ( {\rm lcm}(r_1,\ldots, r_N)) &= \max{ \{ \rm ord}_p(r_1) , \ldots, {\rm ord}_p(r_N) \} \\
   &={\rm ord}_p(r_{\nu}) ={\rm ord}_p (w_{\nu}).
 \end{align*}
 From these facts and (\ref{eq:maxord}), we have
 \begin{align*}
 {\rm ord}_p( \ell  \ {\rm gcd} (q_1,\ldots,q_N)) & = {\rm ord}_p (\ell) +{\rm ord}_p ({\rm gcd} (q_1,\ldots,q_N))\\
 & ={\rm ord}_p(\ell) \qquad ( \text{since} \ p\nmid q_{\nu}.) \\
 & ={\rm ord}_p(q)+{\rm ord}_p(w_{\nu}) \\
 & ={\rm ord}_p(q) +{\rm ord}_p ({\rm lcm} (r_1,\ldots,r_N)) \\
 & ={\rm ord}_p(q\ {\rm lcm}(r_1, \ldots,r_N)).
 \end{align*}
\end{proof}

For a positive integer $N$, let $C_N(t) \ (\in \mathbb Z[t] )$ be the polynomial
 defined by
 \[
 C_N(t):= \begin{cases}
 1 & (N=1),\\
 (t+N-1)(t+N-2) \cdots (t+1) & (N\geq 2),
 \end{cases}
 \]
 and put
 \[
 C_{N,x}(t):=C_N (t-x) \quad \in (\mathbb Z[x])[t]
 \]
 for $x\in \mathbb C$.
\begin{lemma}\label{lem:zetaC}
We have
\[
\zeta_N (s,x)=\frac{1}{(N-1)!} \sum_{k=0}^{N-1} \frac{ C_{N,x}^{(k)} (0)}{k!} \ \zeta(s-k ,x).
\]
\end{lemma}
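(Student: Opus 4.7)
The plan is to start from the Dirichlet series definition of $\zeta_N(s,x)$ in the region of absolute convergence $\mathrm{Re}(s)>N$ and reorganize the sum by collecting tuples $(n_1,\ldots,n_N)$ according to the value of $m := n_1 + \cdots + n_N$. A standard stars-and-bars count gives $\#\{(n_1,\ldots,n_N)\in\mathbb Z_{\geq 0}^N : n_1+\cdots+n_N = m\} = \binom{m+N-1}{N-1} = \frac{1}{(N-1)!}(m+1)(m+2)\cdots(m+N-1) = \frac{C_N(m)}{(N-1)!}$, with the convention that the empty product equals $1$ for $N=1$. Hence for $\mathrm{Re}(s)>N$,
\[
\zeta_N(s,x) = \frac{1}{(N-1)!} \sum_{m=0}^{\infty} \frac{C_N(m)}{(x+m)^s}.
\]

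Next I would rewrite $C_N(m)$ as a polynomial in the shifted variable $x+m$, which is exactly what $C_{N,x}$ is designed to do. Writing $m = (x+m) - x$ gives $C_N(m) = C_N((x+m)-x) = C_{N,x}(x+m)$. Since $C_{N,x}(t)$ is a polynomial in $t$ of degree $N-1$, Taylor's theorem at $t=0$ yields the exact expansion
\[
C_{N,x}(x+m) = \sum_{k=0}^{N-1} \frac{C_{N,x}^{(k)}(0)}{k!}(x+m)^k.
\]

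Substituting this into the rearranged series and interchanging the two finite-and-infinite sums (justified by the absolute convergence of $\sum_m (x+m)^{k-s}$ for each $k \leq N-1$ when $\mathrm{Re}(s)>N$) produces
\[
\zeta_N(s,x) = \frac{1}{(N-1)!} \sum_{k=0}^{N-1} \frac{C_{N,x}^{(k)}(0)}{k!} \sum_{m=0}^{\infty} (x+m)^{k-s} = \frac{1}{(N-1)!} \sum_{k=0}^{N-1} \frac{C_{N,x}^{(k)}(0)}{k!}\, \zeta(s-k,x),
\]
which is the claimed identity on the half-plane $\mathrm{Re}(s)>N$.

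Finally, both sides are meromorphic functions of $s$ on all of $\mathbb{C}$ (the left-hand side by the known meromorphic continuation of $\zeta_N$, and the right-hand side as a finite $\mathbb{Q}(x)$-linear combination of Hurwitz zeta functions), so the identity extends to all $s$ by analytic continuation. I do not anticipate any serious obstacle; the only mildly delicate point is keeping track of the case $N=1$, where $C_{1,x}\equiv 1$ so the right-hand side collapses to $\zeta(s,x)$, matching $\zeta_1(s,x)$, and checking the stars-and-bars count reduces to a triviality.
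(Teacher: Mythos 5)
Your proof is correct and is essentially the same as the paper's: both collect the terms by $m=n_1+\cdots+n_N$, use the stars-and-bars count $\binom{m+N-1}{N-1}=\frac{1}{(N-1)!}C_{N,x}(x+m)$, expand $C_{N,x}$ by Taylor's theorem at $0$, and interchange the sums in the region $\mathrm{Re}(s)>N$. Your explicit remarks about analytic continuation and the $N=1$ case are fine additions but do not change the argument.
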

\begin{proof}
Let  $s\in \mathbb C$ with ${\rm Re} (s) >N$.  Since
$ | \{ (n_1, \ldots,n_N)  \in \mathbb Z^N \ |\ n_1+\cdots +n_N=n,\ n_1,\ldots, n_N \geq 0 \} | =
\begin{pmatrix}
n+N-1 \\
 N-1\\
 \end{pmatrix}$,
we have
\begin{align*}
\zeta_N(s,x) & =\sum_{n_1=0}^{\infty} \cdots \sum_{n_N=0}^{\infty} (x+n_1+\cdots+n_N)^{-s} \\
& =\sum_{n=0}^{\infty} \begin{pmatrix}
n+N-1 \\
N-1\\
\end{pmatrix} (n+x)^{-s}  \\
& =\frac{1}{(N-1)!} \sum_{n=0}^{\infty} C_{N,x}(n+x) \times (n+x)^{-s} \\
& =\frac{1}{(N-1)!} \sum_{n=0}^{\infty} \sum_{k=0}^{N-1} \frac{C_{N,x}^{(k)} (0) }{k!} (n+x)^{-(s-k)} \\
& = \frac{1}{(N-1)!} \sum_{k=0}^{N-1} \frac{ C_{N,x}^{(k)} (0) }{k!} \ \zeta (s-k,x),
\end{align*}
and we get the assertion.
\end{proof}

From  Proposition~\ref{prop:BZ} and Lemma~\ref{lem:zetaC}, we get the following theorem and corollary.
\begin{theorem}\label{theo:BZ}
Let $x$ be a complex number which has a positive real part, $w_1=r_1/q_1,$
$\ldots, w_N =r_N/q_N  \  (r_i,q_i \in \mathbb Z_{ >0}$ for $i=1,2, \ldots, N )$    positive  rational numbers
with ${\rm gcd}(r_i,q_i)=1$ for $i=1,\ldots,N$ and put
\[
w:= {\rm lcm}(r_1,\ldots,r_N) /{\rm gcd} (q_1,\ldots,q_N) ,\
\ell_1:=w/w_1, \ldots,\ell_N:=w/w_N.
\]
We have
\begin{align*}
& \zeta_N(s,x \ |\ w_1,\ldots,w_N) \\
&=\frac{1}{ w^s (N-1)!} \sum_{k_1=0}^{\ell_1-1} \cdots \sum_{k_N=0}^{\ell_N-1}
\sum_{k=0}^{N-1} \frac{C^{(k)}_{N,y(k_1,\ldots,k_N)}(0)}{k!} \zeta (s-k, y(k_1,\ldots,k_N)).
\end{align*}
where $y(k_1,\ldots,k_N):= w^{-1} (x+k_1w_1+\cdots+k_Nw_N)$.
\end{theorem}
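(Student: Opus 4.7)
The theorem is essentially a direct composition of Proposition~\ref{prop:BZ} with Lemma~\ref{lem:zetaC}, so my plan is to apply these two results in turn and then pass to analytic continuation.

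First, I would invoke Proposition~\ref{prop:BZ} with the prescribed $w$ and $\ell_i = w/w_i$ to rewrite the Barnes zeta function as
\[
\zeta_N(s, x\,|\,w_1,\ldots,w_N) = w^{-s} \sum_{k_1=0}^{\ell_1-1} \cdots \sum_{k_N=0}^{\ell_N-1} \zeta_N\bigl(s, y(k_1,\ldots,k_N)\bigr),
\]
where $y(k_1,\ldots,k_N) = w^{-1}(x + k_1 w_1 + \cdots + k_N w_N)$. Before applying Lemma~\ref{lem:zetaC} to each summand, I would verify that the argument $y(k_1,\ldots,k_N)$ has positive real part: since ${\rm Re}(x) > 0$, each $k_i \geq 0$, and $w, w_i > 0$, this is immediate, so Lemma~\ref{lem:zetaC} is applicable term by term.

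Next, I would substitute the expansion provided by Lemma~\ref{lem:zetaC},
\[
\zeta_N\bigl(s, y(k_1,\ldots,k_N)\bigr) = \frac{1}{(N-1)!} \sum_{k=0}^{N-1} \frac{C_{N, y(k_1,\ldots,k_N)}^{(k)}(0)}{k!}\, \zeta\bigl(s-k, y(k_1,\ldots,k_N)\bigr),
\]
into each term of the finite multi-sum, and collect the factor $w^{-s}/(N-1)!$ out front. This immediately produces the formula in the statement.

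The only subtlety—and not really an obstacle—is the domain of validity: Proposition~\ref{prop:BZ} and Lemma~\ref{lem:zetaC} are initially proved for ${\rm Re}(s) > N$, so the derivation above gives the identity in that half-plane. Since both sides are meromorphic functions of $s$ on $\mathbb{C}$ (each with only simple poles coming from the Hurwitz / Barnes zeta factors), the identity then extends to all $s \in \mathbb{C}$ by analytic continuation. I do not anticipate any genuine difficulty; the work was done in the two prior results, and the theorem is the bookkeeping that assembles them into the final closed form.
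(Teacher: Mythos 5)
Your proposal is correct and follows exactly the route the paper takes: the paper states that Theorem~\ref{theo:BZ} follows directly from Proposition~\ref{prop:BZ} combined with Lemma~\ref{lem:zetaC}, which is precisely your composition of the two results (the paper does not even write out the bookkeeping). Your added remarks on the positivity of ${\rm Re}(y(k_1,\ldots,k_N))$ and on extending from ${\rm Re}(s)>N$ by analytic continuation are sound and, if anything, slightly more careful than the paper's implicit argument.
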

\begin{corollary}\label{cor:BZ2}
Let $ V(x) :=\langle  \{ \zeta (s-k,y) \ |\ k\in \mathbb Z_{ \geq 0},  \ y\in \mathbb Q(x),\ {\rm Re}(y) >0 \} \rangle_{\mathbb Q (x)}$.
For positive rational numbers $w_1,\ldots,w_N$, we have
$w^s \zeta_N (s, x\ |\ $ $w_1$, $\ldots, w_N) \ \in V(x)$.
\end{corollary}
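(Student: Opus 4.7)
The plan is to deduce the corollary as an essentially immediate consequence of Theorem~\ref{theo:BZ}. Multiplying both sides of the formula in Theorem~\ref{theo:BZ} by $w^s$, I would obtain
\[
w^s \zeta_N(s, x \mid w_1,\ldots,w_N) = \frac{1}{(N-1)!} \sum_{k_1=0}^{\ell_1-1} \cdots \sum_{k_N=0}^{\ell_N-1} \sum_{k=0}^{N-1} \frac{C_{N, y(k_1,\ldots,k_N)}^{(k)}(0)}{k!}\, \zeta\bigl(s-k,\, y(k_1,\ldots,k_N)\bigr),
\]
where $y(k_1,\ldots,k_N) = w^{-1}(x + k_1 w_1 + \cdots + k_N w_N)$. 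It then suffices to check that every summand on the right lies in $V(x)$, and that the sum is finite.

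First I would verify that the zeta arguments are admissible, i.e.\ that $y(k_1,\ldots,k_N) \in \mathbb{Q}(x)$ and $\mathrm{Re}\,y(k_1,\ldots,k_N) > 0$. Since $w, w_1,\ldots,w_N \in \mathbb{Q}_{>0}$ and $k_1,\ldots,k_N \in \mathbb{Z}_{\geq 0}$, the element $y(k_1,\ldots,k_N)$ is an affine function of $x$ with nonnegative rational coefficients, hence lies in $\mathbb{Q}(x)$. Because $\mathrm{Re}\,x > 0$ by hypothesis and the remaining contributions $w^{-1}k_i w_i$ are nonnegative real, the positivity of the real part is automatic. In particular each $\zeta(s-k, y(k_1,\ldots,k_N))$ is one of the generators of $V(x)$.

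Next I would verify that the coefficients lie in $\mathbb{Q}(x)$. By definition $C_{N,y}(t) = C_N(t - y) = (t - y + N - 1)(t - y + N - 2)\cdots(t - y + 1)$ for $N \geq 2$ (and equals $1$ for $N=1$), so $C_{N,y}^{(k)}(0)/k!$ is a polynomial in $y$ with rational coefficients. Substituting $y = y(k_1,\ldots,k_N) \in \mathbb{Q}(x)$ shows that each coefficient $C_{N,y(k_1,\ldots,k_N)}^{(k)}(0)/(k!\,(N-1)!)$ is an element of $\mathbb{Q}(x)$.

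Finally, since the triple sum ranges over $0 \leq k_i \leq \ell_i - 1$ (with $\ell_i \in \mathbb{Z}$ by Proposition~\ref{prop:BZ}) and $0 \leq k \leq N-1$, it is a finite $\mathbb{Q}(x)$-linear combination of elements of the generating set for $V(x)$, proving $w^s \zeta_N(s, x \mid w_1,\ldots,w_N) \in V(x)$. There is no real obstacle here; the only subtle point is the observation that $C_{N,y}^{(k)}(0)$ is polynomial in $y$, which ensures that specializing to $y \in \mathbb{Q}(x)$ keeps the coefficients rational in $x$.
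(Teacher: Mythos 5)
Your proposal is correct and matches the paper's treatment: the corollary is stated there as an immediate consequence of Theorem~\ref{theo:BZ}, obtained exactly as you describe by multiplying through by $w^s$ and noting that the arguments $y(k_1,\ldots,k_N)$ lie in $\mathbb Q(x)$ with positive real part and that the coefficients $C^{(k)}_{N,y(k_1,\ldots,k_N)}(0)/(k!\,(N-1)!)$ are rational functions of $x$. Your explicit verification of these routine points is sound.
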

\newpage
\begin{example}\label{ex:BZ}
{\small
(1)
\vspace{-1cm}
\begin{align*}
\zeta_1(s,x) &= \zeta(s,x), \\
\zeta_2(s,x) &= (1-x)\zeta (s,x)+\zeta (s-1,x), \\
\zeta_3(s,x) &= \frac{1}{2} ( (x^2-3x+2)\zeta (s,x) +(3-2x)\zeta(s-1,x)+\zeta(s-2,x)).
\end{align*}
(2)
\vspace{-1cm}
\begin{align*}
 &\zeta_1(s,x \ |\ 1) = \zeta(s,x), \\
 &\zeta_{2}(s, x \mid 1, 2)
= 2^{-s}\left(\left(1 - \frac{x}{2}\right)\zeta\left(s, \frac{x}{2}\right) + \zeta\left(s - 1, \frac{x}{2}\right) + \left(\frac{1}{2} - \frac{x}{2}\right)\zeta\left(s, \frac{x}{2} + \frac{1}{2}
\right) + \zeta\left(s - 1, \frac{x}{2} + \frac{1}{2}\right)\right)
\end{align*}
(3)
\vspace{-1cm}
\begin{align*}
 &\zeta_1(s,x \ |\ 1) = \zeta(s,x),  \hspace{5cm} \\
  &\zeta_{2}\left(s, x \middle|\, 1, \frac{1}{2}\right)
  = \left(1 - x\right)\zeta\left(s, x\right) + \zeta\left(s - 1, x\right) + \left(\frac{1}{2} - x\right)\zeta\left(s, x + \frac{1}{2}\right) + \zeta\left(s - 1, x + \frac{1}{2}\right), \\
 &\zeta_{3}\left(s, x \middle|\, 1, \frac{1}{2}, \frac{1}{3}\right) \\
 &=\frac{1}{2}\left(\left(x^{2} - 3 x + 2\right)\zeta\left(s, x\right) + \left(3 - 2 x\right)\zeta\left(s-1, x\right) + \zeta\left(s-2, x\right)\right. \\
 & \left. +\left(x^{2} - \frac{7 x}{3} + \frac{10}{9}\right)\zeta\left(s, x + \frac{1}{3}\right) + \left(\frac{7}{3} - 2 x\right)\zeta\left(s-1, x + \frac{1}{3}\right) + \zeta\left(s-2, x + \frac{1}{3}\right)\right. \\
 & \left. +\left(x^{2} - \frac{5 x}{3} + \frac{4}{9}\right)\zeta\left(s, x + \frac{2}{3}\right) + \left(\frac{5}{3} - 2 x\right)\zeta\left(s-1, x + \frac{2}{3}\right) + \zeta\left(s-2, x + \frac{2}{3}\right)\right. \\
 & \left. +\left(x^{2} - 2 x + \frac{3}{4}\right)\zeta\left(s, x + \frac{1}{2}\right) + \left(2 - 2 x\right)\zeta\left(s-1, x + \frac{1}{2}\right) + \zeta\left(s-2, x + \frac{1}{2}\right)\right. \\
 & \left. +\left(x^{2} - \frac{4 x}{3} + \frac{7}{36}\right)\zeta\left(s, x + \frac{5}{6}\right) + \left(\frac{4}{3} - 2 x\right)\zeta\left(s-1, x + \frac{5}{6}\right) + \zeta\left(s-2, x + \frac{5}{6}\right)\right. \\
 & \left. +\left(x^{2} - \frac{2 x}{3} - \frac{5}{36}\right)\zeta\left(s, x + \frac{7}{6}\right) + \left(\frac{2}{3} - 2 x\right)\zeta\left(s-1, x + \frac{7}{6}\right) + \zeta\left(s-2, x + \frac{7}{6}\right)\right).
\end{align*}
}
\end{example}
The surface on the top in Figure~\ref{fig:bzeta1-1/2} shows  a graph of $\zeta_2(s,x  \ |\ 1,1/2)$ with $-2 <s<1$ and $0<x<1$,
and the curve on the  bottom shows the section of the surface cut by the plane $x=1/3$.
From the figure, we can know that $\zeta_2 (s,1/3  \ | \ 1,1/2)$ has a zero around $s=0.25$. In fact,
the computation shows that the zero exists at $s \fallingdotseq 0.2558028917231215$.
\begin{figure}[H]
\begin{center}
\includegraphics[width=10cm]{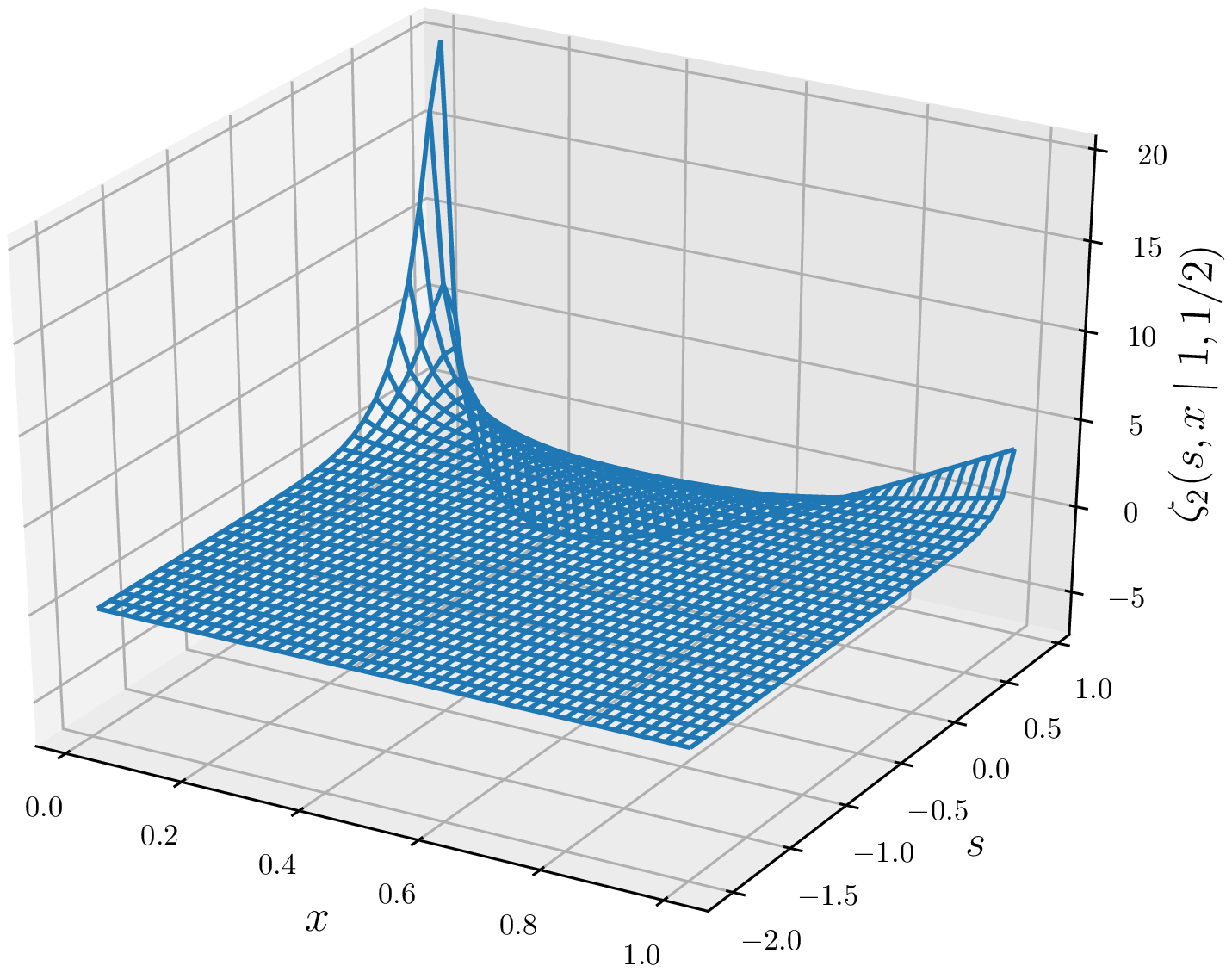}
\includegraphics[width=10cm]{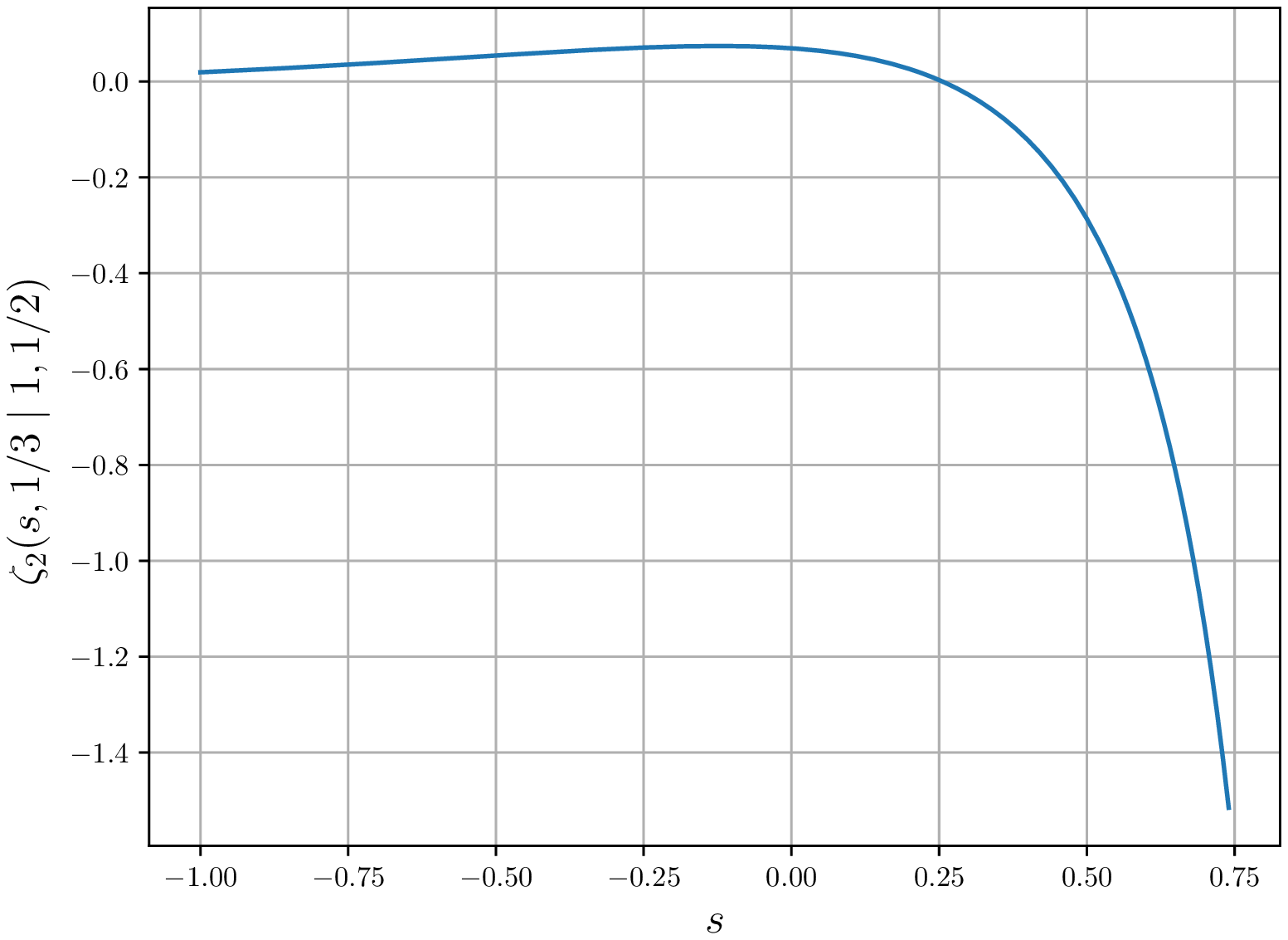}
\end{center}
\caption{$\zeta_2(s,x\ |\ 1,1/2)$ and $\zeta_2(s,1/3 \ |\ 1,1/2)$}
\label{fig:bzeta1-1/2}
\end{figure}
The surface on the top in Figure~\ref{fig:bzeta1-1/2-1/3} shows  a graph of $\zeta_3(s,x  \ |\ 1,1/2,1/3)$ with $-2 <s<1$ and $0<x<1$,
and the curve on the  bottom shows the section of the surface cut by the plane $x=1/3$.
From the figure, we can know that $\zeta_3 (s,1/3  \ | \ 1,1/2,1/3)$ has a zero around $s=0$. In fact,
we have $\zeta_3(0,1/3 \ |\ 1/2,1/3)=0$.
We can  know this fact by a result of the Barnes on the relation between the values of Barnes zeta function at non-positive integers
and the Bernoulli-Barnes polynomials $B_k (X \ |\ w_1,\ldots,w_N) \in \mathbb Q(w_1,\ldots,w_N)[X]$ (\cite[p.383]{B2}) defined by
\[
\frac{t^N e^{(w_1+\cdots+w_N-X)t}}{(e^{w_1t}-1) \cdots (e^{w_Nt}-1)} =\sum_{k=0}^{\infty} B_k(X \ |\ w_1,\ldots,w_N) \frac{t^k}{k!}.
\]
\begin{figure}[H]
\begin{center}
\includegraphics[width=10cm]{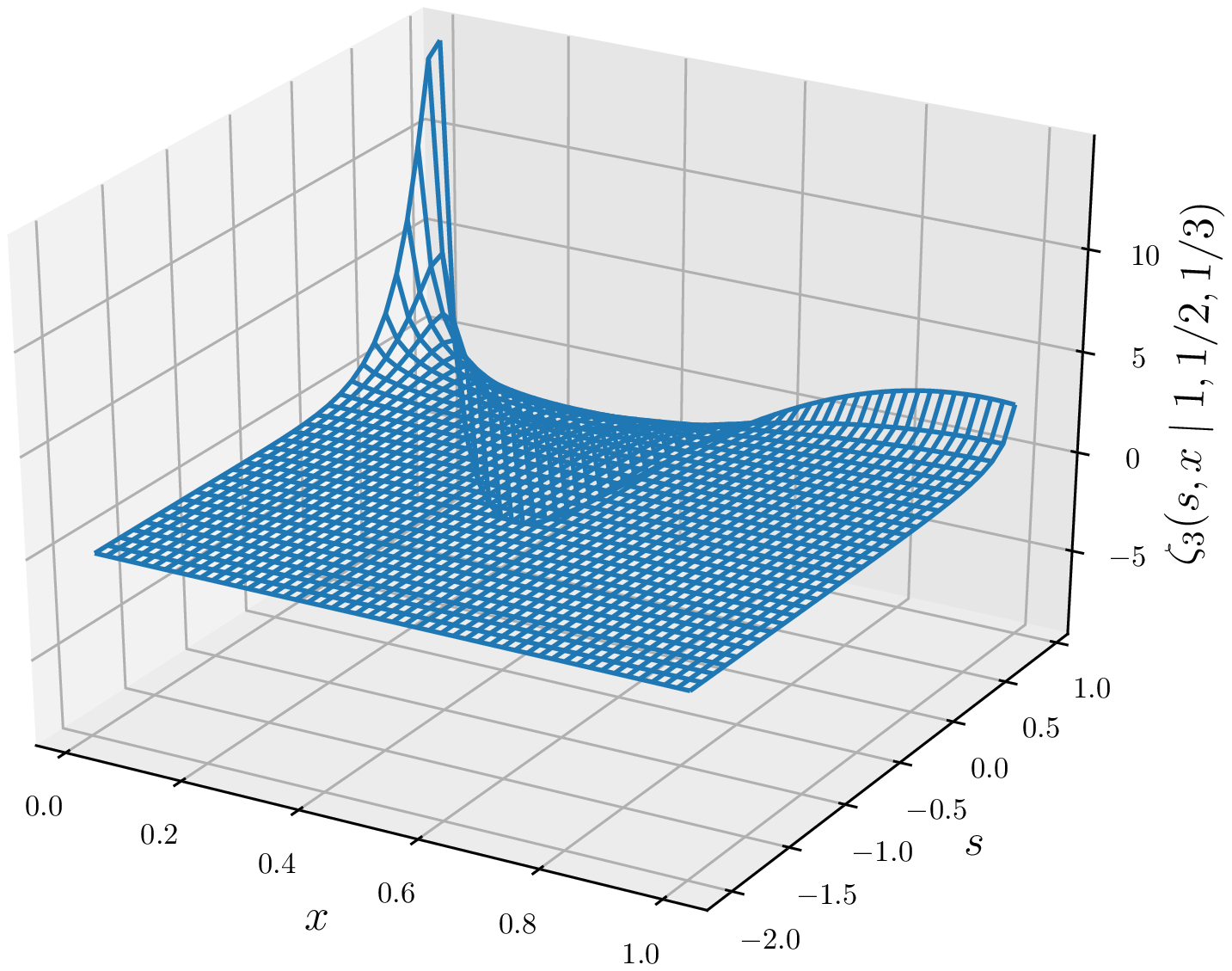}
\includegraphics[width=10cm]{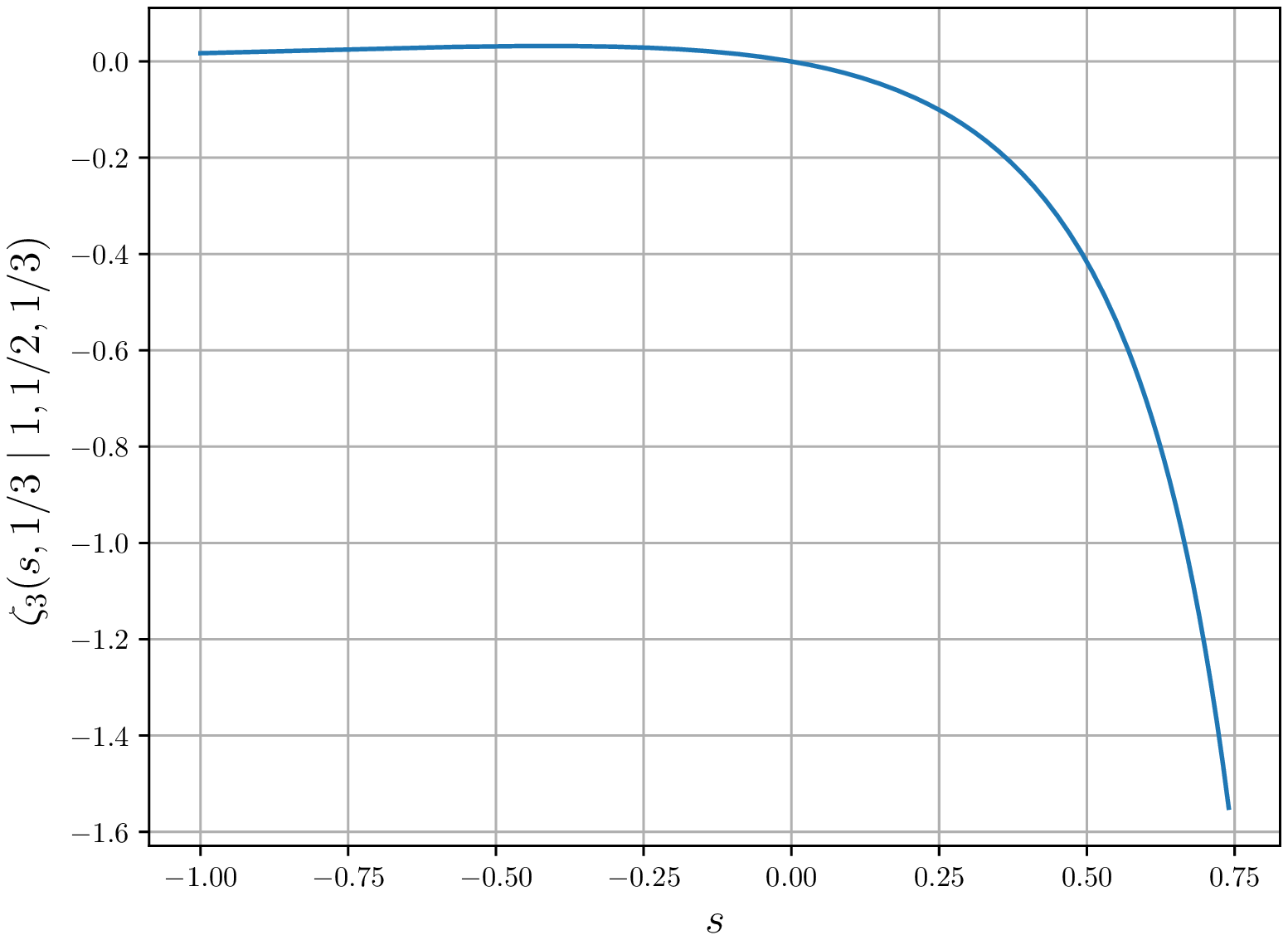}
\end{center}
\caption{$\zeta_3(s,x\ |\ 1,1/2,1/3)$ and $\zeta_3(s,1/3 \ |\ 1,1/2,1/3)$}
\label{fig:bzeta1-1/2-1/3}
\end{figure}
\begin{theorem}\label{theo:Barnes} $($Barnes \cite[p.405]{B2}$)$
For $\ell \in \mathbb Z_{\geq 0}$, we have
\[
\zeta_N(-\ell, x \ |\ w_1,\ldots, w_N)=\frac{(-1)^{\ell}\ell!}{(N+\ell)!} B_{N+\ell} (x \ |\ w_1,\ldots,w_N).
\]
\end{theorem}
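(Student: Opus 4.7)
The plan is to carry out the classical residue computation for the Barnes zeta function via a Hankel contour, which is the standard method used by Barnes himself. First I would establish the Mellin integral representation
\[
\Gamma(s)\,\zeta_N(s,x \ |\ w_1,\ldots,w_N)
= \int_0^{\infty} \frac{t^{s-1} e^{-xt}}{\prod_{i=1}^{N}(1-e^{-w_i t})}\,dt \qquad (\mathrm{Re}(s)>N),
\]
which follows at once by writing each summand of the defining series as a Gamma integral and summing the resulting geometric series in each $n_i$.

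Next I would deform the line of integration to a Hankel contour $H$ that comes in from $+\infty$ above the positive real axis, loops once around the origin counterclockwise, and returns to $+\infty$ below. With the standard branch of $(-t)^{s-1}$ (cut along the positive real axis), the contributions from the two horizontal portions combine, using $\Gamma(s)\Gamma(1-s)=\pi/\sin\pi s$, to produce
\[
\zeta_N(s,x \ |\ w_1,\ldots,w_N)
= -\frac{\Gamma(1-s)}{2\pi i}\int_H \frac{(-t)^{s-1}e^{-xt}}{\prod_{i=1}^N(1-e^{-w_it})}\,dt,
\]
which furnishes the meromorphic continuation of $\zeta_N$ to all of $\mathbb{C}$, with simple poles only at $s=1,\ldots,N$.

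Specializing to $s=-\ell$ for $\ell\in\mathbb{Z}_{\geq 0}$, the factor $(-t)^{s-1}=(-t)^{-\ell-1}$ becomes single-valued in $t$, so $H$ collapses to a small circle around the origin and the integral equals $2\pi i$ times the residue at $t=0$. Using the generating function identity
\[
\frac{t^N e^{-xt}}{\prod_{i=1}^N(1-e^{-w_it})}
= \sum_{k=0}^{\infty} B_k(x \ |\ w_1,\ldots,w_N)\,\frac{t^k}{k!}
\]
(which is equivalent to the definition in the excerpt, after writing $e^{w_it}-1 = e^{w_it}(1-e^{-w_it})$), the residue is obtained by reading off the coefficient of $t^{-1}$, which corresponds to $k=N+\ell$; it equals a sign times $B_{N+\ell}(x \ |\ w_1,\ldots,w_N)/(N+\ell)!$. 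Multiplying by $\Gamma(1+\ell)=\ell!$ and the leading sign of the Hankel formula then produces the asserted value $(-1)^\ell\,\ell!\,B_{N+\ell}(x \ |\ w_1,\ldots,w_N)/(N+\ell)!$.

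The main obstacle is purely the bookkeeping of signs: the branch convention for $(-t)^{s-1}$ on the two sides of the cut, the orientation of $H$, and the overall sign attached to the Hankel representation must conspire to give exactly $(-1)^\ell$ rather than $(-1)^{\ell+1}$. Apart from this care, the argument reduces to extracting a single coefficient from a formal power series.
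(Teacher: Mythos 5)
Your argument is correct: the Mellin representation, the Hankel-contour continuation $\zeta_N(s,x\,|\,w_1,\ldots,w_N)=-\tfrac{\Gamma(1-s)}{2\pi i}\int_H\tfrac{(-t)^{s-1}e^{-xt}}{\prod_i(1-e^{-w_it})}\,dt$, and the residue extraction at $s=-\ell$ (where $(-t)^{-\ell-1}=(-1)^{\ell+1}t^{-\ell-1}$ contributes the extra sign that combines with the leading minus to give $(-1)^{\ell}$) all check out, and the identity $e^{w_it}-1=e^{w_it}(1-e^{-w_it})$ correctly reconciles the two forms of the Bernoulli--Barnes generating function. The paper itself states this theorem without proof as a citation of Barnes, and your contour-integral derivation is precisely the classical route Barnes takes, so there is nothing further to reconcile.
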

By Theorem~\ref{theo:Barnes} and
\begin{align*}
B_3(X \ | \ w_1,w_2,w_3) &=  \frac{3}{4}+\frac{w_1+w_2}{4w_3}+\frac{w_2+w_3}{4w_1}+\frac{w_3+w_1}{4w_2} \\
& - \left\{ \frac{3}{2 w_1}+\frac{3}{2w_2}+\frac{3}{2w_3}+\frac{w_1}{2w_2w_3}+\frac{w_2}{2w_3w_1}+\frac{w_3}{2w_1w_2} \right\} X \\
& +\left\{ \frac{3}{2w_1w_2}+\frac{3}{2w_2w_3}+\frac{3}{2w_1w_3} \right\} X^2 -\frac{1}{w_1w_2w_3} X^3,
\end{align*}
we have
\[
\zeta_3 (0,1/3 \ | \ 1,1/2,1/3)=\frac{1}{3!} B_3(1/3 \ | \ 1,1/2,1/3)=0.
\]

\section{Generalization of Kummer's Formula}\label{sec:Kummer}

In this section, we give generalizations of Kummer's formula (\ref{eq:Kummer}).
We can consider Kummer's formula for the multiple gamma functions:
\begin{align*}
\Gamma_N (x) & :=\exp(\zeta_N'(0,x)), \\
\Gamma_{N}(x|w_{1}, \ldots, w_{N}) &:= \exp\left( \zeta^{\prime}_{N}(0, x| w_{1}, \ldots, w_{N}) \right)
\end{align*}
defined by Barnes~\cite{B2}.
By using $\log \Gamma_{1}(x) = \zeta^{\prime}(0, x) = \log (\Gamma(x)/\sqrt{2\pi})$ and Kummer's formula, we have
\begin{align*}
  \log \Gamma_{1}(x) &= \frac{1}{\pi} \sum_{n=1}^{\infty} \frac{(\log n)\sin(2\pi nx)}{n} + \frac{\log(2\pi) + \gamma}{\pi} \sum_{n=1}^{\infty} \frac{\sin(2\pi nx)}{n} \\
  &\quad + \frac{1}{2}\sum_{n=1}^{\infty}\frac{\cos(2\pi nx)}{n}.
\end{align*}
Koyama and Kurokawa~\cite[Theorem~1]{KK} generalized Kummer's formula for $\Gamma_{N}(x)$ with $N = 2, 3$.
In this section, we generalize Kummer's formula for $\Gamma_{N}(x)$ with $N \in \Z_{\ge 2}$ and explain that a similar formula can be obtained for
$\Gamma_N (x|w_1,\ldots,w_N)$ for
positive rational numbers $w_{1}, \ldots, w_{N}$
and a positive real number $x$.

First, we show Lemmas~\ref{lem:zetaNandN-1} and \ref{lem:zetaN-k}.

\begin{lemma}\label{lem:zetaNandN-1}
  For $N \in \Z_{\ge 2}$, we have
  \[(N-1) \zeta_{N}(s, x) = \zeta_{N-1}(s-1, x) + (N-1-x) \zeta_{N-1}(s, x).\]
\end{lemma}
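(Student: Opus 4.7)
The plan is to deduce the identity directly from the binomial-coefficient expansion of $\zeta_N(s,x)$ that was established in (the proof of) Lemma~\ref{lem:zetaC}, without invoking the full decomposition into Hurwitz zeta functions. Recall that for $\mathrm{Re}(s)>N$ we have
\[
\zeta_N(s,x) \;=\; \sum_{n=0}^{\infty} \binom{n+N-1}{N-1} (n+x)^{-s},
\qquad
\zeta_{N-1}(s,x) \;=\; \sum_{n=0}^{\infty} \binom{n+N-2}{N-2} (n+x)^{-s},
\]
since both sums count the number of non-negative integer compositions $n_1+\cdots+n_N=n$ (resp.\ $n_1+\cdots+n_{N-1}=n$).

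First I would establish the elementary identity
\[
(N-1) \binom{n+N-1}{N-1} \;=\; (n+N-1) \binom{n+N-2}{N-2},
\]
which is immediate from $(N-1) \cdot \frac{(n+N-1)!}{(N-1)!\,n!} = \frac{(n+N-1)!}{(N-2)!\,n!}$. Next, splitting $n+N-1 = (n+x) + (N-1-x)$, I would multiply this identity by $(n+x)^{-s}$ and sum over $n \geq 0$ to obtain
\[
(N-1)\zeta_N(s,x) \;=\; \sum_{n=0}^{\infty} \binom{n+N-2}{N-2}(n+x)^{-(s-1)} \;+\; (N-1-x) \sum_{n=0}^{\infty} \binom{n+N-2}{N-2}(n+x)^{-s},
\]
which is exactly $\zeta_{N-1}(s-1,x) + (N-1-x)\zeta_{N-1}(s,x)$.

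The series manipulations are valid for $\mathrm{Re}(s) > N$, which simultaneously guarantees absolute convergence of $\zeta_N(s,x)$ and of $\zeta_{N-1}(s-1,x)$; after that, the identity between meromorphic functions extends to all $s\in\mathbb C$ by analytic continuation. There is no real obstacle here---the combinatorial identity is completely routine---and the only thing worth being careful about is confirming the right half-plane of joint absolute convergence so that term-by-term rearrangement is legitimate.
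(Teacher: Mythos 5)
Your proof is correct, and it takes a genuinely different route from the paper's. The paper deduces the lemma from its Hurwitz-zeta expansion (Lemma~\ref{lem:zetaC}): it differentiates the polynomial factorization $C_{N,x}(t) = (t-x+N-1)C_{N-1,x}(t)$ to get $C_{N,x}^{(k)}(0) = kC_{N-1,x}^{(k-1)}(0) + (N-1-x)C_{N-1,x}^{(k)}(0)$, and then reorganizes the finite linear combinations of $\zeta(s-k,x)$ on both sides. You instead work directly at the level of the Dirichlet series, using $(N-1)\binom{n+N-1}{N-1} = (n+N-1)\binom{n+N-2}{N-2}$ together with the split $n+N-1 = (n+x)+(N-1-x)$; this is really the same factorization of $C_{N,x}$ evaluated at $t=n+x$, but you bypass the detour through Taylor coefficients at $0$ and the Hurwitz-zeta decomposition entirely. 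Your convergence bookkeeping is right: $\mathrm{Re}(s)>N$ covers $\zeta_{N-1}(s-1,x)$ as well, and analytic continuation finishes the job (the paper's version gets all $s$ for free since both sides are already finite combinations of continued Hurwitz zeta functions, but that is a cosmetic difference). Your argument is the more elementary and self-contained one for this particular lemma; the paper's has the advantage of reusing machinery it has already built and needs elsewhere.
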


\begin{proof}
  Using $C_{N, x}(t) = (t - x + N -1)C_{N-1, x}(t)$, we can prove the following formula by induction on $k$:
  \begin{equation}\label{eq:CNandN-1}
    C_{N, x}^{(k)}(t) = kC_{N-1, x}^{(k-1)}(t) + (t - x + N -1)C_{N-1, x}^{(k)}(t).
  \end{equation}

  From Lemma~\ref{lem:zetaC} and \eqref{eq:CNandN-1}, we have
  \begin{align*}
    &\zeta_{N-1}(s-1, x) + (N-1-x)\zeta_{N-1}(s, x) \\
    &= \frac{1}{(N-2)!} \sum_{k=0}^{N-2} \frac{C_{N-1, x}^{(k)}(0)}{k!} \zeta(s-k-1, x) + \frac{N-1-x}{(N-2)!} \sum_{k=0}^{N-2} \frac{C_{N-1, x}^{(k)}(0)}{k!} \zeta(s-k, x) \\
    &= \frac{1}{(N-2)!} \sum_{k=0}^{N-1} \frac{kC_{N-1, x}^{(k-1)}(0)}{k!} \zeta(s-k, x) + \frac{N-1-x}{(N-2)!} \sum_{k=0}^{N-1} \frac{C_{N-1, x}^{(k)}(0)}{k!} \zeta(s-k, x) \\
    &= \frac{1}{(N-2)!} \sum_{k=0}^{N-1} \frac{kC_{N-1, x}^{(k-1)}(0) + (N-1-x)C_{N-1, x}^{(k)}(0)}{k!} \zeta(s-k, x) \\
    &= \frac{1}{(N-2)!} \sum_{k=0}^{N-1} \frac{C_{N, x}^{(k)}(0)}{k!} \zeta(s-k, x) \\
    &= (N-1) \zeta_{N}(s, x).
  \end{align*}
\end{proof}

\begin{lemma}\label{lem:zetaN-k}
  For $k \in \Z_{\ge 1}$, we have
  \[\zeta_{N-k}(s-k, x) = \sum_{\ell = 0}^{k} (-1)^{\ell} \zeta_{N-\ell}(s, x) \prod_{i=1}^{k-\ell} (N-k+i-1) \sum_{N-k \le a_{1} \le \cdots \le a_{\ell} \le N-\ell} (a_{1} - x) \cdots (a_{\ell} - x),\]
  where
  \[\sum_{N-k \le a_{1} \le \cdots \le a_{\ell} \le N-\ell} (a_{1} - x) \cdots (a_{\ell} - x) = 1\]
  when $\ell = 0$, and
  \[\prod_{i=1}^{k-\ell} (N-k+i-1) = 1\]
  when $\ell = k$.
\end{lemma}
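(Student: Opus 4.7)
The approach is induction on $k \ge 1$. The base case $k = 1$ is a rearrangement of Lemma~\ref{lem:zetaNandN-1}: the right-hand side of the target formula reduces to $(N-1)\zeta_N(s,x) - (N-1-x)\zeta_{N-1}(s,x)$, where for $\ell = 0$ the multi-index sum equals $1$ and for $\ell = 1$ it collapses to the single term $a_1 = N-1$.

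For the inductive step, apply Lemma~\ref{lem:zetaNandN-1} with $N$ replaced by $N-k$ and $s$ by $s-k$ to obtain
\begin{equation*}
\zeta_{N-k-1}(s-k-1,x) = (N-k-1)\,\zeta_{N-k}(s-k,x) - (N-k-1-x)\,\zeta_{N-k-1}(s-k,x).
\end{equation*}
Expand $\zeta_{N-k}(s-k,x)$ using the inductive hypothesis with parameters $(N,k)$, and expand $\zeta_{N-k-1}(s-k,x) = \zeta_{(N-1)-k}(s-k,x)$ using the inductive hypothesis with parameters $(N-1,k)$. A direct check shows that $(N-k-1)\prod_{i=1}^{k-\ell}(N-k+i-1)$ and $\prod_{i=1}^{k-\ell+1}((N-1)-k+i-1)$ both equal $(N-k-1)(N-k)\cdots(N-\ell-1) = \prod_{i=1}^{(k+1)-\ell}((N-k-1)+i-1)$, which is precisely the product appearing in the target formula for $k+1$.

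Writing $S_\ell^{(k,N)}$ as shorthand for $\sum_{N-k \le a_1 \le \cdots \le a_\ell \le N-\ell}(a_1-x)\cdots(a_\ell-x)$, the coefficient comparison then reduces the whole inductive step to the combinatorial identity
\begin{equation*}
S_\ell^{(k+1,N)} = S_\ell^{(k,N)} + (N-k-1-x)\,S_{\ell-1}^{(k,N-1)}.
\end{equation*}
This follows by splitting the sum defining $S_\ell^{(k+1,N)}$ according to whether $a_1 \ge N-k$, which yields $S_\ell^{(k,N)}$, or $a_1 = N-k-1$, which pulls out the factor $(N-k-1-x)$ and leaves a sum over $N-k-1 \le a_2 \le \cdots \le a_\ell \le N-\ell$ that is exactly $S_{\ell-1}^{(k,N-1)}$. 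The extremal indices $\ell = 0$ and $\ell = k+1$, where only one of the two terms produced by Lemma~\ref{lem:zetaNandN-1} contributes, have to be checked separately; they match the conventions that the empty product equals $1$ and that $S_{k+1}^{(k+1,N)} = (N-k-1-x)^{k+1} = (N-k-1-x)\cdot S_k^{(k,N-1)}$.

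The main obstacle is nothing conceptual but rather the careful bookkeeping of the two index shifts $(N,k)\to(N,k)$ and $(N,k)\to(N-1,k)$ when applying the inductive hypothesis, making sure the product prefactors telescope into the single product $\prod_{i=1}^{(k+1)-\ell}((N-k-1)+i-1)$ and that the boundary values $\ell=0,k+1$ are absorbed correctly into the general formula.
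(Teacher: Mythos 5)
Your proposal is correct and follows essentially the same route as the paper: induction on $k$, applying Lemma~\ref{lem:zetaNandN-1} with $N\mapsto N-k$, $s\mapsto s-k$, expanding the two resulting terms via the inductive hypothesis at parameters $(N,k)$ and $(N-1,k)$, and merging them through exactly the splitting of the multi-index sum according to whether $a_1=N-k-1$ or $a_1\ge N-k$ (the paper encodes your identity $S_\ell^{(k+1,N)}=S_\ell^{(k,N)}+(N-k-1-x)S_{\ell-1}^{(k,N-1)}$ by writing the second contribution as a sum constrained to $a_1=N-k-1$). The product bookkeeping and the boundary cases $\ell=0,k+1$ are handled the same way in both arguments.
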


\begin{proof}
  We prove this lemma by induction on $k$.
  By Lemma~\ref{lem:zetaNandN-1}, it holds for $k = 1$.

  Next, we assume that the assertion holds for $k \in \Z_{\ge 1}$.
  Then, using Lemma~\ref{lem:zetaNandN-1}, we have
  \begin{align*}
    &\zeta_{N-(k+1)}(s-(k+1), x) = (N-k-1)\zeta_{N-k}(s-k, x) - (N-k-1-x)\zeta_{N-k-1}(s-k, x) \\
    &= \sum_{\ell = 0}^{k} (-1)^{\ell} \zeta_{N-\ell}(s, x) \prod_{i=0}^{k-\ell} (N-k+i-1) \sum_{N-k \le a_{1} \le \cdots \le a_{\ell} \le N-\ell} (a_{1} - x) \cdots (a_{\ell} - x) \\
    &\quad - \sum_{\ell = 0}^{k} (-1)^{\ell} \zeta_{N-\ell-1}(s, x) \prod_{i=1}^{k-\ell} (N-k+i-2) \\
    &\qquad \times \sum_{N-k-1 = a_{1} \le \cdots \le a_{\ell+1} \le N-\ell-1} (a_{1} - x) \cdots (a_{\ell+1} - x) \\
    &= \sum_{\ell = 0}^{k} (-1)^{\ell} \zeta_{N-\ell}(s, x) \prod_{i=1}^{k-\ell+1} (N-k+i-2) \sum_{N-k \le a_{1} \le \cdots \le a_{\ell} \le N-\ell} (a_{1} - x) \cdots (a_{\ell} - x) \\
    &\quad - \sum_{\ell = 1}^{k+1} (-1)^{\ell-1} \zeta_{N-\ell}(s, x) \prod_{i=1}^{k-\ell+1} (N-k+i-2) \sum_{N-k-1 = a_{1} \le \cdots \le a_{\ell} \le N-\ell} (a_{1} - x) \cdots (a_{\ell} - x) \\
    &= \prod_{i=1}^{k+1} (N-k+i-2) \zeta_{N}(s, x) \\
    &\quad + \sum_{\ell = 1}^{k} (-1)^{\ell} \zeta_{N-\ell}(s, x) \prod_{i=1}^{k-\ell+1} (N-k+i-2) \sum_{N-k-1 \le a_{1} \le \cdots \le a_{\ell} \le N-\ell} (a_{1} - x) \cdots (a_{\ell} - x) \\
    &\quad + (-1)^{k+1} (N-k-1-x)^{k+1} \zeta_{N-k-1}(s, x) \\
    &= \sum_{\ell = 0}^{k+1} (-1)^{\ell} \zeta_{N-\ell}(s, x) \prod_{i=1}^{k-\ell+1} (N-k+i-2) \sum_{N-k-1 \le a_{1} \le \cdots \le a_{\ell} \le N-\ell} (a_{1} - x) \cdots (a_{\ell} - x).
  \end{align*}
  Thus, the assertion also holds for $k+1$.
\end{proof}

Using Lemma~\ref{lem:zetaN-k} for $k = N-1$, we get the following corollary.

\begin{corollary}\label{cor:zetaN-1Gamma}
  For $N \in \Z_{\ge 1}$, we have
  \[\zeta^{\prime}(-(N-1), x) = \sum_{\ell = 0}^{N-1} (-1)^{\ell} \zeta^{\prime}_{N-\ell}(0, x) \prod_{i=1}^{N-\ell-1} i \sum_{1 \le a_{1} \le \cdots \le a_{\ell} \le N-\ell} (a_{1} - x) \cdots (a_{\ell} - x).\]
\end{corollary}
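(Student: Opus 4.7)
The plan is to obtain the corollary as a direct specialization of Lemma~\ref{lem:zetaN-k} by setting $k=N-1$, then differentiating in $s$ and evaluating at $s=0$. Substituting $k=N-1$ in the lemma produces $\zeta_{1}(s-(N-1),x)$ on the left, which equals $\zeta(s-(N-1),x)$ since $\zeta_{1}(s,x)=\zeta(s,x)$. On the right, the product $\prod_{i=1}^{k-\ell}(N-k+i-1)$ collapses to $\prod_{i=1}^{N-\ell-1} i$, and the summation condition $N-k\le a_{1}\le\cdots\le a_{\ell}\le N-\ell$ becomes $1\le a_{1}\le\cdots\le a_{\ell}\le N-\ell$, which already matches the polynomial-in-$x$ coefficients appearing in the target formula.

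Next I would differentiate both sides of the resulting identity with respect to $s$. This is legitimate because each $\zeta_{N-\ell}(s,x)$ is meromorphic in $s$ with simple poles only at $s=1,\ldots,N-\ell$, so every term is holomorphic in a neighborhood of $s=0$ and termwise differentiation is justified. Evaluating at $s=0$, the left-hand side becomes $\zeta'(-(N-1),x)$ (by the chain rule, since the shift $s\mapsto s-(N-1)$ preserves derivatives in $s$), while the right-hand side becomes $\sum_{\ell=0}^{N-1}(-1)^{\ell}\zeta'_{N-\ell}(0,x)\prod_{i=1}^{N-\ell-1} i \cdot \sum_{1\le a_{1}\le\cdots\le a_{\ell}\le N-\ell}(a_{1}-x)\cdots(a_{\ell}-x)$, because the polynomial factors in $x$ are $s$-independent. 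This is exactly the asserted identity.

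The main obstacle is purely index bookkeeping, together with handling the degenerate ends of the summation. For $\ell=0$ the empty product in $(a_1-x)\cdots(a_\ell-x)$ equals $1$ by convention, giving the leading term $\zeta'_{N}(0,x)\cdot(N-1)!$. For $\ell=N-1$ the empty product $\prod_{i=1}^{0} i = 1$ and the constraint $1\le a_{1}\le\cdots\le a_{N-1}\le 1$ forces $a_{1}=\cdots=a_{N-1}=1$, producing the single term $(-1)^{N-1}(1-x)^{N-1}\zeta'(0,x)$; these edge-case conventions are exactly those stipulated in Lemma~\ref{lem:zetaN-k}, so no additional verification is needed. Thus the corollary follows at once with no new estimates or analytic input.
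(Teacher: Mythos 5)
Your proposal is correct and follows exactly the paper's route: the paper derives the corollary precisely by specializing Lemma~\ref{lem:zetaN-k} to $k=N-1$ (so that $\zeta_{N-k}=\zeta_1=\zeta$ and $N-k+i-1=i$) and differentiating at $s=0$, where all terms are holomorphic. Your handling of the empty-product conventions at $\ell=0$ and $\ell=N-1$ matches the conventions stipulated in the lemma, so nothing further is needed.
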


From Corollary~\ref{cor:zetaN-1Gamma} and \cite[Theorem~3]{KK}, we have the following theorem.

\begin{theorem}\label{th:logG}
  For $x \in \R$ with $0 < x \le 1$ and $N \in \Z_{\ge 2}$, we have
  \begin{align*}
    &\sum_{\ell = 0}^{N-1} (-1)^{\ell} \log \Gamma_{N-\ell}(x) \prod_{i=1}^{N-\ell-1} i \sum_{1 \le a_{1} \le \cdots \le a_{\ell} \le N-\ell} (a_{1} - x) \cdots (a_{\ell} - x) \\
    &=
    \begin{dcases}
      \begin{aligned}
        &\frac{2(-1)^{\frac{N}{2}}(N-1)!}{(2\pi)^{N}} \left\{ \sum_{n=1}^{\infty} \frac{(\log n) \cos(2\pi nx)}{n^{N}} \right. \\
        &\left. + \left( \log(2\pi) + \gamma - \left(1 + \frac{1}{2} + \cdots + \frac{1}{N-1} \right) \right) \sum_{n=1}^{\infty} \frac{\cos(2\pi nx)}{n^{N}} \right. \\
        &\left. \hspace{6.6cm} - \frac{\pi}{2} \sum_{n=1}^{\infty} \frac{\sin(2\pi nx)}{n^{N}} \right\}
      \end{aligned} & (N \in 2\Z), \\
      \begin{aligned}
        &\frac{2(-1)^{\frac{N-1}{2}}(N-1)!}{(2\pi)^{N}} \left\{ \sum_{n=1}^{\infty} \frac{(\log n) \sin(2\pi nx)}{n^{N}} \right. \\
        &\left. + \left( \log(2\pi) + \gamma - \left(1 + \frac{1}{2} + \cdots + \frac{1}{N-1} \right) \right) \sum_{n=1}^{\infty} \frac{\sin(2\pi nx)}{n^{N}} \right. \\
        &\left. \hspace{6.6cm} + \frac{\pi}{2} \sum_{n=1}^{\infty} \frac{\cos(2\pi nx)}{n^{N}} \right\}
      \end{aligned} & (N \not \in 2\Z). \\
    \end{dcases}
  \end{align*}
\end{theorem}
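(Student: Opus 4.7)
The plan is to recognize that the left-hand side of the theorem coincides with $\zeta'(-(N-1), x)$ by Corollary~\ref{cor:zetaN-1Gamma}, and then to invoke a known Fourier expansion of that single Hurwitz-zeta derivative. Since $\log \Gamma_{N-\ell}(x) = \zeta'_{N-\ell}(0, x)$ by the definition of the multiple gamma function, Corollary~\ref{cor:zetaN-1Gamma} gives at once
\[
\sum_{\ell = 0}^{N-1} (-1)^{\ell} \log \Gamma_{N-\ell}(x) \prod_{i=1}^{N-\ell-1} i \sum_{1 \le a_{1} \le \cdots \le a_{\ell} \le N-\ell} (a_{1}-x) \cdots (a_{\ell}-x) = \zeta'(-(N-1), x),
\]
so the entire theorem reduces to an explicit formula for $\zeta'(-(N-1), x)$ on the range $0 < x \le 1$.

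For the right-hand side I would apply [KK, Theorem~3]. Its proof starts from Hurwitz's functional equation
\[
\zeta(s, x) = \frac{2\Gamma(1-s)}{(2\pi)^{1-s}}\left(\sin\frac{\pi s}{2}\sum_{n=1}^{\infty}\frac{\cos(2\pi n x)}{n^{1-s}} + \cos\frac{\pi s}{2}\sum_{n=1}^{\infty}\frac{\sin(2\pi n x)}{n^{1-s}}\right),
\]
valid for $0 < x \le 1$, and differentiates in $s$ before evaluating at $s = -(N-1)$. At this point $1-s = N$, so the tail series become $\sum n^{-N}$. Exactly one of $\sin(\pi s/2)$ and $\cos(\pi s/2)$ vanishes at $s = -(N-1)$, which forces the split into the two parity cases. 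The differentiation of $n^{s-1}$ supplies the $\log n$ series; the logarithmic derivative of $\Gamma(1-s)(2\pi)^{s-1}$ at $s = -(N-1)$ contributes $\log(2\pi) + \gamma - (1 + 1/2 + \cdots + 1/(N-1))$ through $\psi(N) = -\gamma + H_{N-1}$; and the derivative of the vanishing trigonometric factor produces the companion series with prefactor $\pi/2$ of opposite trigonometric type.

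Collecting the surviving terms and pulling out the common factor $2\Gamma(N)/(2\pi)^{N} = 2(N-1)!/(2\pi)^{N}$, together with the overall sign $(-1)^{N/2}$ or $(-1)^{(N-1)/2}$ coming from the nonzero value of $\sin(\pi s/2)$ or $\cos(\pi s/2)$ at $s = -(N-1)$, yields the two bracketed expressions claimed in the theorem. The argument is thus very short: substitute Corollary~\ref{cor:zetaN-1Gamma} on one side and [KK, Theorem~3] on the other. The only step that requires real care is the sign bookkeeping for $\sin(\pi s/2)$, $\cos(\pi s/2)$ and their derivatives at negative integers $s = -(N-1)$; once this is tabulated, the two cases match by direct inspection, so there is no conceptual obstacle.
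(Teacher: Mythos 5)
Your proposal is correct and follows exactly the paper's route: the paper proves Theorem~\ref{th:logG} precisely by identifying the left-hand side with $\zeta'(-(N-1),x)$ via Corollary~\ref{cor:zetaN-1Gamma} (using $\log\Gamma_{N-\ell}(x)=\zeta'_{N-\ell}(0,x)$) and then quoting \cite[Theorem~3]{KK} for the Fourier expansion of that quantity. Your additional sketch of how the Koyama--Kurokawa formula arises from Hurwitz's functional equation, including the sign and $\psi(N)$ bookkeeping, is consistent and only elaborates on the cited result.
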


\begin{rem}
  \cite[Theorems~1 and 3]{KK} hold for $x = 1$.
  This is confirmed by \eqref{eq:Leib} and
  \[\Gamma^{\prime}(1+k) = k!\left(- \gamma + \sum_{\ell = 1}^{k} \frac{1}{\ell} \right)\]
  for $k \in \Z_{\ge 1}$.
  In the case of $N=2,3$ in Theorem~\ref{th:logG}, we get the formulae (\ref{eq:KK1}) and (\ref{eq:KK2}).
\end{rem}

Next, we explain that a similar formula can be obtained for
$\Gamma_N(x|w_1,\ldots,w_N)$  for
positive rational numbers $w_{1}, \ldots, w_{N}$ and $x \in \R$ with $x > 0$.
By Theorem~\ref{theo:BZ}, we have
\begin{align*}
  & \log \Gamma_{N}(x; w_{1}, \ldots, w_{N}) = \zeta_N^{\prime}(0,x \ |\ w_1,\ldots,w_N) \\
  &= \frac{1}{(N-1)!} \sum_{k_1=0}^{\ell_1-1} \cdots \sum_{k_N=0}^{\ell_N-1}
  \sum_{k=0}^{N-1} \frac{C^{(k)}_{N,y(k_1,\ldots,k_N)}(0)}{k!} \\
  &\qquad \times \left\{ \zeta^{\prime} (-k, y(k_1,\ldots,k_N)) - (\log w) \zeta (-k, y(k_1, \ldots ,k_N) \right\},
\end{align*}
where $w \in \Z_{\ge 1}$ and $y(k_{1}, \ldots, k_{N}) \in \R_{> 0}$ given in Theorem~\ref{theo:BZ}.
Furthermore, for $y \in \R_{> 0}$, we have
\[\zeta(s, y) = \zeta(s, x) - \sum_{m=0}^{Y-1} (m + x)^{-s},\]
where $Y \in \Z_{\ge 0}$ and $x \in \R$ with $0 < x \le 1$ satisfy $y = Y + x$.
Therefore, using Corollary~\ref{cor:zetaN-1Gamma}, we can represent $\zeta^{\prime}(-k, y(k_{1}, \ldots, k_{N}))$ by $\log \Gamma_{N-\ell}(x)$ for $\ell = 0, 1, \ldots, k$.

\section{ Values at non-positive integers  of higher order derivatives   }\label{sec:val}

In this section,
we consider
the values at non-positive integers $s$  of higher order derivatives of $\zeta_N(s,x \ |\ w_1,\ldots,w_N)$ for positive
rational numbers $w_1,\ldots,w_N$ and $x$. From Theorem~\ref{theo:BZ}, we have
\begin{align}
 &\zeta_N^{(n)} (s,x \ |\ w_1,\ldots,w_N) =\frac{1}{(N-1)!} \sum_{j=0}^n \begin{pmatrix}
n\\
j\\
\end{pmatrix} (-\log{w})^{n-j}  w^{-s}  \notag \\
& \hspace{2cm} \times \sum_{k_1=0}^{\ell_1-1} \cdots \sum_{k_N=0}^{\ell_N-1} \sum_{k=0}^{N-1}  \frac{C^{(k)}_{N,y(k_1,\ldots,k_N)}(0)}{k!} \zeta^{(j)} (s-k, y(k_1,\ldots,k_N)),
\label{eq:higher-zeta0}
\end{align}
for $n\in \mathbb Z_{\geq 0}$. Therefore, the calculation of the values at   non-positive integers $s$ of the higher order derivatives of $\zeta_N(s,x \ |\ w_1,\ldots, w_N)$ for
positive rational numbers $w_1,\ldots, w_N$ and $x$ is reduced to that of the values of $\zeta^{(n)}(s,y)$  at non-positive integers $s $ for
$y\in \mathbb Q_{>0}$.

Let $u$ and $v$ be integers with $1\leq u\leq v$. Consider the functional equation \cite[Theorem~12.8]{A1}:
\begin{align}
\zeta \left(1-s,\frac{u}{v} \right) & =  \frac{2\Gamma (s) } {(2\pi v)^s} H\left( s,\frac{u}{v} \right), \label{eq:Feq}
\end{align}
where
\[
H \left( s,\frac{u}{v} \right):= \sum_{k=1}^v \left\{ \cos{ \left( \frac{\pi s}{2} -\frac{2\pi ku}{v} \right) } \zeta \left(
s,\frac{k}{v} \right) \right\}.
\]

We have
\begin{equation}\label{eq:Tayzeta}
\zeta (s,x)=\frac{1}{s-1}+\sum_{n=0}^{\infty} \frac{(-1)^n}{n!} \gamma_n(x) (s-1)^n,
\end{equation}
where $\gamma_n(x)$ is the generalized Stieltjes constant.
For the Riemann zeta function $\zeta (s)=\zeta (s,1)$,
\[
 \gamma_n=\gamma_n(1)=\lim_{m\to \infty} \left( \sum_{k=1}^m \frac{ \log^n{k}}{k}-\frac{\log^{n+1}{m}}{n+1} \right),
\]
and
\[
\gamma:=\gamma_0= \lim_{m\to \infty} \left( \sum_{k=1}^m \frac{ 1}{k}-\log{m} \right) \fallingdotseq 0.57721
\]
is the Euler's constant.
Berndt \cite{B} proved the following theorem.
\begin{theorem}[\protect{\cite[Theorem 1]{B}}]\label{theo:Berndt}
For $x\in \mathbb R$ with $0<x\leq 1$ and $n\in \mathbb Z$ with $n\geq 0$, we have
\[
\gamma_n(x)=\lim_{m\to \infty} \left(\sum_{k=0}^m \frac{\log^n(k+x)}{k+x}-\frac{\log^{n+1}(m+x)}{n+1} \right).
\]
\end{theorem}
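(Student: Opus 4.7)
The plan is to derive the formula by combining the Laurent expansion~(\ref{eq:Tayzeta}) of $\zeta(s,x)$ at $s=1$ with a truncated analytic-continuation identity, following the classical strategy used for Stieltjes' original constants (the case $x=1$). The first step is to produce a partial-sum identity of the shape
\[
\zeta(s,x) \;=\; \sum_{k=0}^{m}(k+x)^{-s} + \frac{(m+x)^{1-s}}{s-1} + R_m(s,x),
\]
valid for all $s\ne 1$ in a fixed complex neighborhood of $s=1$, where $R_m(s,x)$ is holomorphic at $s=1$ (the residues of $\zeta(s,x)$ and of $(m+x)^{1-s}/(s-1)$ cancel) and tends to $0$ uniformly on every compact subset as $m\to\infty$. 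I would obtain this by applying Euler--Maclaurin summation to the tail $\sum_{k=m+1}^{\infty}(k+x)^{-s}$, which equals $\zeta(s,x)-\sum_{k=0}^{m}(k+x)^{-s}$ for ${\rm Re}(s)>1$, and then extending by analytic continuation; carrying the Euler--Maclaurin expansion one Bernoulli term past the leading one yields a remainder of order $(m+x)^{-{\rm Re}(s)-1}$, uniform in $s$ on any closed disc around $s=1$.

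Subtracting the pole puts the identity in a form that is holomorphic at $s=1$:
\[
\zeta(s,x) - \frac{1}{s-1} \;=\; \sum_{k=0}^{m}(k+x)^{-s} + \frac{(m+x)^{1-s}-1}{s-1} + R_m(s,x).
\]
By~(\ref{eq:Tayzeta}) the LHS has Taylor coefficient $(-1)^n\gamma_n(x)/n!$ at $(s-1)^n$. On the right, expanding $(m+x)^{1-s}=\sum_{j\ge 0}(-1)^j(s-1)^j\log^j(m+x)/j!$ shows that the $(s-1)^n$-coefficient of $((m+x)^{1-s}-1)/(s-1)$ equals $(-1)^{n+1}\log^{n+1}(m+x)/(n+1)!$, while term-by-term differentiation gives $(d/ds)^n(k+x)^{-s}|_{s=1}=(-1)^n\log^n(k+x)/(k+x)$.

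Matching $(s-1)^n$-coefficients (equivalently, applying $(d/ds)^n$ at $s=1$ and dividing by $n!$) then produces
\[
\frac{(-1)^n\gamma_n(x)}{n!} = \frac{(-1)^n}{n!}\sum_{k=0}^{m}\frac{\log^n(k+x)}{k+x} + \frac{(-1)^{n+1}\log^{n+1}(m+x)}{(n+1)!} + \frac{R_m^{(n)}(1,x)}{n!},
\]
after which Berndt's formula follows by multiplying through by $(-1)^n n!$ and letting $m\to\infty$, using that $R_m^{(n)}(1,x)\to 0$ by Cauchy's estimates applied to the uniform bound on $R_m$.

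The main technical hurdle is securing the uniform decay of $R_m(s,x)$ on a full complex disc around $s=1$ (not merely along the real axis), since only uniform holomorphic convergence legitimizes pulling $(d/ds)^n|_{s=1}$ through the $m$-limit. Everything else is bookkeeping with Taylor coefficients of explicit functions; in particular, once a sufficient Euler--Maclaurin remainder estimate is in hand, no additional analytic input is required.
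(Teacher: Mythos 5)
Your proposal is correct, and the argument is essentially complete: the identity $\zeta(s,x)=\sum_{k=0}^{m}(k+x)^{-s}+\frac{(m+x)^{1-s}}{s-1}-s\int_{m}^{\infty}\{u\}(u+x)^{-s-1}\,du$ (first-order Euler--Maclaurin) already furnishes a remainder $R_m(s,x)$ that is holomorphic for ${\rm Re}(s)>0$ and bounded by $O\bigl((m+x)^{-1/2}\bigr)$ uniformly on the disc $|s-1|\le 1/2$, which is all that is needed to push the $n$-th derivative at $s=1$ through the limit via Cauchy's estimates; the coefficient bookkeeping you carry out is accurate. Note that the paper does not prove this statement at all --- it is quoted verbatim from Berndt \cite{B} --- and your route is the classical one (and essentially Berndt's own), so there is nothing in the paper to diverge from.
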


\begin{lemma}\label{lem:Entire}
$H\left(s,\frac{u}{v} \right)$ is an entire function.
\end{lemma}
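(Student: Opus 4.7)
The plan is to isolate the only potential singularity and show that it is removable. Each Hurwitz zeta function $\zeta(s, k/v)$ is meromorphic on $\mathbb{C}$ with a single simple pole at $s = 1$ of residue $1$, while the factor $\cos\!\left(\frac{\pi s}{2} - \frac{2\pi k u}{v}\right)$ is entire. Consequently $H(s, u/v)$ is holomorphic on $\mathbb{C} \setminus \{1\}$, and it suffices to prove that its residue at $s = 1$ vanishes.

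Computing termwise, the residue of the $k$-th summand at $s = 1$ equals
\[
\cos\!\left(\frac{\pi}{2} - \frac{2\pi k u}{v}\right) = \sin\!\left(\frac{2\pi k u}{v}\right),
\]
so I need to verify the trigonometric identity
\[
\sum_{k=1}^{v} \sin\!\left(\frac{2\pi k u}{v}\right) = 0.
\]
The fastest route is to pair the index $k$ with $v - k$: since $u \in \mathbb{Z}$,
\[
\sin\!\left(\frac{2\pi(v-k)u}{v}\right) = \sin\!\left(2\pi u - \frac{2\pi k u}{v}\right) = -\sin\!\left(\frac{2\pi k u}{v}\right),
\]
so the pair cancels. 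The leftover terms (the term $k = v$, and if $v$ is even also $k = v/2$) contribute $\sin(2\pi u) = 0$ and $\sin(\pi u) = 0$ respectively. Hence the full sum vanishes.

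Since the simple pole at $s = 1$ has zero residue and is the unique singularity candidate, it is removable, and $H(s, u/v)$ extends to an entire function, completing the proof. The only mild subtlety is the bookkeeping of the pairing argument when $v$ is even, but this is handled by noting that both middle and endpoint terms give $\sin$ of an integer multiple of $\pi$.
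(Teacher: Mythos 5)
Your proof is correct and follows essentially the same route as the paper: both identify $s=1$ as the only possible singularity and show the residue there is $\sum_{k=1}^{v}\sin\!\left(\frac{2\pi ku}{v}\right)=0$, so the pole is removable. The only differences are cosmetic: the paper carries out the full Taylor--Laurent expansion at $s=1$ (because it needs the resulting series \eqref{eq:H} later), while you compute the residue directly and, unlike the paper, explicitly justify the vanishing of the sine sum via the pairing $k\leftrightarrow v-k$.
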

\begin{proof}
It is enough to show that $H\left( s,\frac{u}{v} \right)$ is holomorphic at $s=1$. Since
\[
\cos{\left( \frac{\pi s}{2} -\frac{2\pi ku}{v} \right)}^{(n)}=-\left(\frac{\pi}{2} \right)^n \sin{ \left( \frac{\pi (s+n-1)}{2} -
\frac{2\pi ku}{v} \right)},
\]
we have
\begin{equation}\label{eq:cos=1}
\cos{\left( \frac{\pi s}{2} -\frac{2\pi ku}{v} \right)}^{(n)} \vline_{\begin{array}{l} \\ s=1 \end{array} }=-\left(\frac{\pi}{2} \right)^n \sin{ \left( \frac{\pi n}{2} -
\frac{2\pi ku}{v} \right)}.
\end{equation}
Therefore, we obtain the Taylor expansion at $s=1$:
\begin{equation}\label{eq:Tayc}
\cos{\left( \frac{\pi s}{2} -\frac{2\pi ku}{v} \right)} =-\sum_{n=0}^{\infty} \frac{1}{n!} \left( \frac{\pi}{2} \right)^n \sin{ \left( \frac{\pi n}{2}-\frac{2\pi ku}{v} \right)} (s-1)^n.
\end{equation}

From (\ref{eq:Tayc}) and (\ref{eq:Tayzeta}),
the coefficient of $(s-1)^{-1}$ in the expansion of $H\left(s,\frac{u}{v} \right)$ is $\displaystyle{ \sum_{k=1}^v {\rm sin} \left( \frac{2\pi ku}{v} \right)=0}$, and hence
$H\left(s,\frac{u}{v} \right)$ is holomorphic at $s=1$ and
\begin{align}
H\left(s,\frac{u}{v} \right) & = -\sum_{n=1}^{\infty} \sum_{k=1}^v \frac{1}{n!} \left( \frac{\pi}{2} \right)^n {\rm sin} \left( \frac{n\pi}{2} -\frac{
2\pi ku}{v} \right) (s-1)^{n-1} \notag \\
&+\sum_{k=1}^v \left\{ \cos{ \left( \frac{\pi s}{2} -\frac{2\pi ku}{v} \right) } \sum_{n=0}^{\infty} \frac{(-1)^n} {n!} \gamma_n \left(
\frac{k}{v} \right) (s-1)^n \right\}. \label{eq:H}
\end{align}
\end{proof}

By Leibniz's rule for (\ref{eq:Feq}), we have
\begin{eqnarray}\label{eq:Leib}
  (-1)^n \zeta^{(n)} \left(1-s,\frac{u}{v}  \right) = 2\sum_{a+b+c=n \atop
  a,b,c \geq 0} \left\{  \begin{pmatrix}
  n \\
  a,b,c\\
  \end{pmatrix}  \Gamma^{(a)}(s) \left( (2\pi v)^{-s} \right)^{(b)} \right. \nonumber \\
  \hspace{3cm} \times H^{(c)} \left(s,\frac{u}{v} \right) \biggr\},
\end{eqnarray}
where $ \begin{pmatrix}
n \\
a,b,c\\
\end{pmatrix} :=n!/(a!b!c!)$.
Consider the values of $\Gamma^{(a)}(s),\ ( (2\pi v)^{-s})^{(b)}$ and $H^{(c)} \left( s,\frac{u}{v} \right)$ in (\ref{eq:Leib})
at positive integers $s$. Let $m$ be a positive integer. First, we have
\begin{equation}\label{eq:s=m}
( (2\pi v)^{-s})^{(b)} \vline_{s=m} =(-\log{(2\pi v)})^b (2\pi v)^{-m}.
\end{equation}

Next, we consider the values of $H^{(c)} \left( m,\frac{u}{v} \right)$ for positive integers $m$. For $m\geq 2$, since
$\zeta  \left(s,\frac{k}{v} \right)$ is holomorphic at $s=m$, we have
\begin{align}
& H^{(c)} \left(m,\frac{u}{v} \right) \notag \\
& =\sum_{k=1}^v \sum_{i=0}^c \left\{ \begin{pmatrix}
c\\
i\\
\end{pmatrix} \cos{ \left(\frac{\pi s}{2}-\frac{2\pi ku}{v} \right)^{(c-i)}} \vline_{ \begin{array}{c}
\\
s=m \\
\end{array} } \times \zeta^{(i)} \left( m,\frac{k}{v} \right) \right\} \notag \\
&=-\sum_{k=1}^v \sum_{i=0}^c \left\{ \begin{pmatrix}
c\\
i
\end{pmatrix}  \times \left(\frac{\pi}{2} \right)^{c-i} \sin{ \left( \frac{\pi (m+c-i-1)}{2} -\frac{2\pi ku}{v} \right) }
 \times \zeta^{(i)} \left(m,\frac{k}{v} \right) \right\}, \notag \\
&  \zeta^{(i)} \left( m,\frac{k}{v} \right) = \sum_{\ell=0}^{\infty} \left(-\log{ \left( \ell+\frac{k}{v} \right)} \right)^i \left(\ell+\frac{k}{v} \right)^{-m}.
\notag
\end{align}

On the other hand, for  $m=1$, since
\begin{align*}
&\left\{ \cos{\left(\frac{\pi s}{2} -\frac{2\pi ku}{v} \right)} \sum_{n=0}^{\infty} \frac{(-1)^n}{n!} \gamma_n \left( \frac{k}{v} \right) (s-1)^n \right\}^{
(c)} \\
& = \sum_{i=0}^c \begin{pmatrix}
c\\
i\\
\end{pmatrix} \cos{ \left(\frac{\pi s}{2} -\frac{2\pi ku}{v} \right)^{(c-i)} }\left(
\sum_{n=0}^{\infty} \frac{(-1)^n}{n!} \gamma_n \left( \frac{k}{v} \right) (s-1)^n \right)^{(i)},
\end{align*}
and by (\ref{eq:cos=1}), we have
\begin{align}
h_{c,\frac{u}{v}} & := \sum_{k=1}^v \left\{ \cos{  \left( \frac{\pi s}{2}-\frac{2\pi ku}{v} \right)  } \sum_{n=0}^{\infty}
\frac{(-1)^n}{n!} \gamma_n \left( \frac{k}{v} \right) (s-1)^n \right\}^{(c)} \vline_{\begin{array}{c} \\s=1 \end{array}} \notag \\
& =\sum_{k=1}^v \sum_{i=0}^c (-1)^{i+1} \begin{pmatrix}
c\\
i\\
\end{pmatrix} \left( \frac{\pi}{2} \right)^{c-i} \sin{ \left( \frac{\pi (c-i)}{2}- \frac{2\pi ku}{v} \right)} \gamma_i \left(
\frac{k}{v} \right). \label{eq:h}
\end{align}
From (\ref{eq:H}),(\ref{eq:h}) and
\[
\sum_{k=1}^v \sin{ \frac{2\pi ku}{v} }=0,\quad \sum_{k=1}^v \cos{ \frac{2\pi ku}{v} } =\begin{cases}
0  & (u\ne v)  \\
v & (u=v)
\end{cases},
\]
we get
\begin{align}
H^{(c)} \left( 1,\frac{u}{v} \right) & =\varepsilon_{c,\frac{u}{v} } +h_{c,\frac{u}{v} }, \notag \\
\varepsilon_{c,\frac{u}{v}} & :=-\sum_{k=1}^v \frac{1}{c+1} \left( \frac{\pi}{2} \right)^{c+1} \sin{ \left( \frac{(c+1)\pi}{2}-
\frac{2\pi ku}{v} \right) } \notag \\
& = \begin{cases}
\dfrac{(-1)^{\frac{c}{2}+1}}{c+1} \left( \dfrac{\pi}{2} \right)^{c+1} v& (c\in 2\mathbb Z,\ u=v),\\
& \\
0& \text{(otherwise)}.
\end{cases} \notag
\end{align}
We established the following lemma.
\begin{lemma}\label{lem:Hc}
For $c\in \mathbb Z_{\geq 0},\ m\in \mathbb Z_{>0}$ and $y=u/v \in \mathbb Q$ with $1\leq u\leq v$, we  have
\[
H^{(c)}(m,y)= \left\{ \begin{array}{ll}
\displaystyle{
-\sum_{k=1}^v \sum_{i=0}^c \left\{ \begin{pmatrix}
c\\
i\\
\end{pmatrix} \times \left( \frac{\pi}{2} \right)^{c-i} \right. } & \\
\hspace{0.15cm}
\displaystyle{
\left. \times \sin{ \left( \frac{\pi (m+c-i-1)}{2} -2\pi ky \right) } \times \zeta^{(i)}\left( m,\frac{k}{v} \right) \right\} } & (m \geq 2), \\
\displaystyle{
\varepsilon_{c,y} +\sum_{k=1}^v \sum_{i=0}^c (-1)^{i+1} \begin{pmatrix}
c\\
i\\
\end{pmatrix} \left( \frac{\pi}{2} \right)^{c-i} } & \\
\hspace{2.8cm}
\displaystyle{
\times \sin{ \left( \frac{\pi (c-i)}{2} -2\pi ky \right)} \gamma_i \left( \frac{k}{v} \right) } & (m=1),
\end{array} \right.
\]
where
\begin{align*}
\zeta^{(i)} \left( m,\frac{k}{v} \right) & =\sum_{\ell=0}^{\infty} \left( -\log{\left( \ell+\frac{k}{v} \right)} \right)^i
\left( \ell+\frac{k}{v} \right)^{-m} \quad (m\geq 2), \\
& \\
\varepsilon_{c,y} &=\begin{cases}
\displaystyle{ \frac{(-1)^{\frac{c}{2}+1} }{c+1} \left( \frac{\pi}{2} \right)^{c+1} v} & (c\in 2\mathbb Z,\ y=1), \\
& \\
0 & (\text{otherwise}).
\end{cases}
\end{align*}
\end{lemma}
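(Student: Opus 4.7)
The plan is to split according to the two cases stated. For the regular case $m \geq 2$, since each $\zeta(s,k/v)$ is holomorphic at $s=m$, the function $H(s,u/v)$ is simply a finite sum of products of smooth functions, and I can apply the general Leibniz rule
\[
H^{(c)}\!\left(m,\frac{u}{v}\right)=\sum_{k=1}^{v}\sum_{i=0}^{c}\binom{c}{i}\cos\!\left(\frac{\pi s}{2}-\frac{2\pi k u}{v}\right)^{\!(c-i)}\!\Bigg|_{s=m}\!\zeta^{(i)}\!\left(m,\frac{k}{v}\right).
\]
The derivatives of the cosine factor at $s=m$ follow immediately from formula (4.5) in the proof of Lemma~\ref{lem:Entire}, which produces the sine with argument $\pi(m+c-i-1)/2-2\pi ku/v$. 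The derivatives of the Hurwitz zeta at the integer $m\geq 2$ are represented by the absolutely convergent Dirichlet series $\sum_{\ell\geq 0}(-\log(\ell+k/v))^{i}(\ell+k/v)^{-m}$ obtained by differentiating term by term. Collecting signs yields the first case of the lemma.

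For the singular case $m=1$, the factor $\zeta(s,k/v)$ has a simple pole, so I work instead with the expansion~(4.9) of $H(s,u/v)$ around $s=1$ that was established in the proof of Lemma~\ref{lem:Entire}. That expansion is already split as the sum of two series: a pure power series in $s-1$ coming from the principal parts of the Hurwitz zetas, and a second series involving the Stieltjes constants $\gamma_{n}(k/v)$. Differentiating the first series $c$ times and setting $s=1$ only the $n=c+1$ term survives, contributing
\[
-\sum_{k=1}^{v}\frac{1}{c+1}\left(\frac{\pi}{2}\right)^{c+1}\sin\!\left(\frac{(c+1)\pi}{2}-\frac{2\pi k u}{v}\right),
\]
which after expanding the sine and invoking the Gauss-sum identities $\sum_{k=1}^{v}\sin(2\pi ku/v)=0$ and $\sum_{k=1}^{v}\cos(2\pi ku/v)=v\,\delta_{u,v}$ collapses precisely to $\varepsilon_{c,u/v}$. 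Differentiating the second series $c$ times via Leibniz on the product $\cos(\pi s/2-2\pi k u/v)\cdot\sum_{n\geq 0}\frac{(-1)^{n}}{n!}\gamma_{n}(k/v)(s-1)^{n}$ and evaluating at $s=1$: only the $n=i$ term survives in the inner $i$-th derivative, giving $(-1)^{i}\gamma_{i}(k/v)$, while the $(c-i)$-th derivative of the cosine factor again comes from (4.5). Combining the two factors produces the overall sign $(-1)^{i+1}$ and reproduces exactly the expression $h_{c,u/v}$ defined in~(4.10).

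The main obstacle is the bookkeeping in the $m=1$ case: one must verify that the pole contributions of the individual Hurwitz zetas indeed reassemble into the finite term $\varepsilon_{c,u/v}$, and carefully track the signs arising from the $-$ in formula~(4.5) and from the alternating factor $(-1)^{n}$ in the Stieltjes expansion. The $m\geq 2$ case is comparatively mechanical, once one accepts that term-by-term differentiation of the defining Dirichlet series is justified by uniform convergence for $s$ in a neighborhood of any integer $m\geq 2$.
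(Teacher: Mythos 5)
Your proposal is correct and follows essentially the same route as the paper: for $m\ge 2$ a direct Leibniz differentiation of $H(s,u/v)$ using the derivative formula for the cosine factor and term-by-term differentiation of the Hurwitz zeta series, and for $m=1$ a $c$-fold differentiation of the regularized expansion of $H$ at $s=1$ from the proof of Lemma~\ref{lem:Entire}, with the principal-part series collapsing to $\varepsilon_{c,u/v}$ via the trigonometric sum identities and the Stieltjes-constant series giving $h_{c,u/v}$. Apart from slightly misnumbered equation references, the sign bookkeeping and the surviving-term analysis match the paper's derivation exactly.
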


Finally we consider the value  $\Gamma^{(a)}(m)$ for  positive integers $m$. Coffey \cite{Co} pointed out that  the
value is given by using the complete Bell polynomials $\mathbb B_n$. For a positive integer $n$, we define
the complete Bell polynomial $\mathbb B_n=\mathbb B_n(X_1,\ldots,X_n)$ by
\[
\exp{ \left( \sum_{j=1}^{\infty} X_j \frac{t^j}{j!} \right) } =\sum_{n=0}^{\infty} \mathbb B_n( X_1,\ldots, X_n )\frac{t^n}{n!},
\]
and let $\mathbb B_0:=1$ (see \cite[Chap. III, \S 3.3]{Com}).
It is known that these polynomials are integral and have the following explicit formulae.
\begin{lemma}$($\cite[Chap. III, \S 3.3, Theorem~A]{Com}$)$ \label{lem:Com}
Let $n\geq 1$. We have
\[
\mathbb B_n (X_1, \ldots, X_n)\in \mathbb Z[X_1,\ldots,X_n]
\]
 and
\[
\mathbb B_n (X_1,\ldots, X_n)=\sum_{\pi (n)} \frac{n!}{k_1!k_s!\cdots k_n!} \left( \frac{X_1}{1!} \right)^{k_1} \left( \frac{X_1}{2!} \right)^{k_2}
\cdots \left( \frac{X_n}{n!} \right)^{k_n}
\]
where
\[
\pi (n) := \{ (k_1,k_2, \ldots,k_n )\in \mathbb Z^n \ | \ k_1, k_2,\ldots,k_n \geq 0,\ k_1+2k_2+\cdots +nk_n=n \}.
\]
\end{lemma}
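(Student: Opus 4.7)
The plan is to prove the explicit formula directly from the defining generating function, and then deduce integrality from a combinatorial interpretation of the coefficients. First I would expand $\exp\!\left(\sum_{j=1}^{\infty} X_j\, t^j/j!\right)$ by the Taylor series of $\exp$ and apply the multinomial theorem to each power $\left(\sum_{j\geq 1} X_j\, t^j/j!\right)^m$. Collecting terms and reindexing by the tuple $(k_1, k_2, \ldots)$ of exponents rather than by $m$ (with $m = k_1+k_2+\cdots$) should give the identity
\[
\exp\!\left(\sum_{j=1}^{\infty} X_j \frac{t^j}{j!}\right)
= \sum_{(k_1,k_2,\ldots)} \frac{1}{k_1! k_2! \cdots}
\left(\frac{X_1}{1!}\right)^{k_1}\left(\frac{X_2}{2!}\right)^{k_2}\cdots\, t^{k_1+2k_2+\cdots},
\]
where the sum runs over tuples of nonnegative integers with finite support.

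Next I would extract the coefficient of $t^n$ on both sides. Only tuples with $k_1+2k_2+\cdots+nk_n = n$ contribute, and all later $k_j$ with $j>n$ must be zero, so the sum reduces to one over $\pi(n)$. Multiplying by $n!$ to convert the coefficient of $t^n$ into the coefficient of $t^n/n!$, I obtain the claimed formula for $\mathbb B_n(X_1,\ldots,X_n)$.

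Finally, for integrality, I would give the standard combinatorial reading of the coefficient
\[
\frac{n!}{k_1! k_2! \cdots k_n!\,(1!)^{k_1}(2!)^{k_2}\cdots (n!)^{k_n}}.
\]
This is precisely the number of set partitions of $\{1,2,\ldots,n\}$ consisting of exactly $k_j$ blocks of size $j$ for each $j$: one first counts ordered sequences of labeled blocks as $n!/\prod_j (j!)^{k_j}$, then divides by $\prod_j k_j!$ to identify permutations of blocks of equal size. Being the cardinality of a finite set, this number is a nonnegative integer for every $(k_1,\ldots,k_n)\in\pi(n)$, so $\mathbb B_n\in\mathbb Z[X_1,\ldots,X_n]$.

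No step looks genuinely difficult; the only point requiring a little care is the bookkeeping when reindexing the double sum (over $m$ and over tuples summing to $m$) as a single sum over all tuples, together with the observation that for a fixed $n$ only finitely many tuples contribute to $t^n$, which is what allows the formal manipulation to be interpreted as an equality of polynomials rather than of formal power series in infinitely many variables.
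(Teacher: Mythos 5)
Your proof is correct. The paper gives no proof of this lemma at all --- it is quoted directly from Comtet (Chap.~III, \S 3.3, Theorem~A) --- so there is nothing to compare against except the classical argument, which is exactly what you give: expand the exponential, apply the multinomial theorem, reindex by the exponent tuple so the $1/m!$ cancels against the multinomial numerator, and extract the coefficient of $t^n/n!$; integrality then follows because $n!\big/\bigl(k_1!\cdots k_n!\,(1!)^{k_1}\cdots(n!)^{k_n}\bigr)$ counts set partitions of an $n$-set with $k_j$ blocks of size $j$. Your careful point about only finitely many tuples contributing to each power of $t$ is the right thing to check, and your derivation also implicitly corrects the two typographical slips in the statement as printed ($k_s!$ should read $k_2!$, and $(X_1/2!)^{k_2}$ should read $(X_2/2!)^{k_2}$).
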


We use $\mathbb B_n$ to give an explicit formula for the higher order derivatives of holomorphic functions.
\begin{lemma}$($\cite[p3, Lemma~1]{Co}$)$\label{lem:Cof}
Let $f(s)$ and $g(s)$ be holomorphic functions on an open subset ${\mathcal D}$ of $\mathbb C$ satisfying
$f'(s)=f(s)g(s)$. We have
\[
f^{(n)} (s) \ \left( := \left( \frac{d}{ds} \right)^n f(s) \right) =f(s) \ \mathbb B_n (g(s),g'(s),\ldots , g^{(n-1)}(s)).
\]
\end{lemma}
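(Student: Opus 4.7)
The plan is to proceed by induction on $n$, using two ingredients: a recurrence for the complete Bell polynomials obtained from their generating function, and Leibniz's rule applied to the relation $f'=fg$.

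First I would establish the basic recurrence
\[
\mathbb B_{n+1}(X_1,\ldots,X_{n+1}) = \sum_{k=0}^{n} \binom{n}{k} X_{k+1}\, \mathbb B_{n-k}(X_1,\ldots,X_{n-k}),
\]
with the convention $\mathbb B_0=1$. To derive it, set $\Phi(t)=\exp\!\left(\sum_{j=1}^{\infty} X_j t^j/j!\right)$, the generating function from the definition of $\mathbb B_n$. Differentiating in $t$ yields $\Phi'(t)=\Phi(t)\cdot \sum_{j=1}^{\infty} X_j t^{j-1}/(j-1)!$, and comparing coefficients of $t^n/n!$ on both sides via the Cauchy product produces the recurrence.

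Next, the induction on $n$. The base case $n=0$ holds because $\mathbb B_0=1$, and $n=1$ is immediate from the hypothesis $f'=fg=f\,\mathbb B_1(g)$. Assuming the formula for all orders at most $n$, I would differentiate $f'=fg$ by Leibniz's rule to obtain
\[
f^{(n+1)}(s) = (fg)^{(n)}(s) = \sum_{k=0}^{n} \binom{n}{k} f^{(n-k)}(s)\, g^{(k)}(s).
\]
Applying the inductive hypothesis to each $f^{(n-k)}(s)$ and factoring $f(s)$, the right-hand side becomes
\[
f(s) \sum_{k=0}^{n} \binom{n}{k} g^{(k)}(s)\, \mathbb B_{n-k}(g(s),g'(s),\ldots,g^{(n-k-1)}(s)),
\]
which by the Bell recurrence above (with $X_{k+1}=g^{(k)}(s)$) equals $f(s)\,\mathbb B_{n+1}(g(s),\ldots,g^{(n)}(s))$, closing the induction.

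There is no real obstacle; the only care required is aligning the indexing in Leibniz's rule with the shape of the Bell recurrence, i.e.\ identifying the arguments $X_j=g^{(j-1)}(s)$ so that each differentiated factor $g^{(k)}(s)$ plays the role of $X_{k+1}$. Holomorphy is used only to guarantee that $f^{(n)}(s)$ exists on $\mathcal D$; no nonvanishing hypothesis on $f$ is required, since the induction never divides by $f$.
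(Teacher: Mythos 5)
Your proof is correct: the Bell-polynomial recurrence $\mathbb B_{n+1}=\sum_{k=0}^{n}\binom{n}{k}X_{k+1}\mathbb B_{n-k}$ follows from differentiating the generating function as you describe, and combining it with Leibniz's rule applied to $f'=fg$ closes the induction with the identification $X_{j}=g^{(j-1)}(s)$. The paper itself gives no proof, deferring to Coffey's Lemma~1, but your argument is the standard one behind that reference, so there is nothing to add.
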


By applying Lemma~\ref{lem:Com} for the gamma function $\Gamma (s)$ and the psi function $\psi (s):=
\Gamma'(s)/\Gamma (s) $, we have
\begin{align*}
\Gamma^{(n)} (s) & =\Gamma(s) \ \mathbb B_n (\psi (s),\psi'(s),\ldots,\psi^{(n-1)} (s) ) \notag \\
& =\Gamma (s) \sum_{\pi (n) } \frac{n!}{k_1! k_2! \cdots k_n!} \left( \frac{\psi(s)}{1!} \right)^{k_1}
\left( \frac{\psi'(s)}{2!} \right)^{k_2} \cdots \left( \frac{\psi^{(n-1)} (s)}{n!} \right)^{k_n}.
\end{align*}
Since
\begin{align*}
\psi (1) & =\frac{\Gamma'(1)}{\Gamma (1)} =\int_0^{\infty} e^{-t} \log{t} \ dt =-\gamma, \\
\psi (s+1) & =\frac{\Gamma'(s+1)}{\Gamma (s+1)} =\frac{\Gamma (s)+s\Gamma'(s)}{s\Gamma (s) }=\frac{1}{s}+\psi (s),
\end{align*}
we have
\begin{equation*}\label{eq:psi-val}
\psi (m+1) =-\gamma +\sum_{k=1}^m \frac{1}{k} \qquad (m \in \mathbb Z_{\geq 1} ).
\end{equation*}

On the other hand, the value $\psi^{(n)} (m)$ for $n\geq 1$ is obtained as follows.
From the Weierstrass factorization theorem of the gamma function:
\[
\frac{1}{\Gamma (s)} =se^{\gamma s} \prod_{n=1}^{\infty} \left( 1+\frac{s}{n} \right)  e^{-\frac{s}{n} },
\]
we have
\[
\psi (s)= -\gamma +\sum_{n=1}^{\infty} \left( \frac{1}{n} -\frac{1}{s+n-1} \right),
\]
and hence
\[
\psi^{(n)} (s)=\sum_{k=1}^{\infty} \frac{(-1)^{n+1} n!} {(k+s-1)^{n+1}} \qquad (n \in \mathbb Z_{\geq 1}).
\]

Therefore, we obtain
\begin{equation*}\label{eq:dpsi}
\psi^{(n)} (m)=(-1)^{n+1} n! \left( \zeta (n+1)-\sum_{k=1}^{m-1} \frac{1}{k^{n+1}} \right)
\qquad (n,m \in \mathbb Z_{\geq 1}).
\end{equation*}
We established the following lemma.
\begin{lemma}\label{lem:higherG}
For $n\in \mathbb Z_{\geq 0}$ and $m\in \mathbb Z_{>0}$,  we  have
\begin{align*}
  &\Gamma^{(n)}(m) \\
   &= (m-1)! \sum_{\pi (n)} \frac{n!}{k_1! k_2!\cdots k_n!} \left( \frac{\psi (m)}{1!} \right)^{k_1} \left(
  \frac{\psi' (m)}{2!} \right)^{k_2} \cdots \left( \frac{\psi^{(n-1)}(m)}{n!} \right)^{k_n},
\end{align*}
where
\[\pi (n) = \{ (k_1,k_2, \ldots ,k_n ) \in \mathbb Z^n \ |\ k_1,k_2,\ldots, k_n \geq 0, \quad k_1+2k_2+\cdots+nk_n=n \},\]
\[
\psi^{(\ell)}(m) = \begin{cases}
\displaystyle{ -\gamma +\sum_{n=1}^{m-1} \frac{1}{k} }
 & (\ell=0),\\
 & \\
\displaystyle{ (-1)^{\ell+1} \ell ! \left( \zeta (\ell+1) -\sum_{k=1}^{m-1} \frac{1}{k^{\ell+1}} \right)} & (\ell \geq 1).
\end{cases}
\]
\end{lemma}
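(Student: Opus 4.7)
The plan is to deduce this from Lemma~\ref{lem:Cof} applied to $f(s)=\Gamma(s)$ and $g(s)=\psi(s)=\Gamma'(s)/\Gamma(s)$, which satisfies $f'(s)=f(s)g(s)$ by definition. This immediately gives
\[
\Gamma^{(n)}(s)=\Gamma(s)\,\mathbb{B}_n(\psi(s),\psi'(s),\ldots,\psi^{(n-1)}(s)).
\]
Setting $s=m\in\mathbb{Z}_{>0}$ converts $\Gamma(s)$ into $(m-1)!$, and expanding $\mathbb{B}_n$ via the explicit formula in Lemma~\ref{lem:Com} produces exactly the multinomial sum appearing in the statement. What remains is to evaluate $\psi^{(\ell)}(m)$ for $\ell\geq 0$.

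For $\ell=0$, I would start from $\psi(1)=\Gamma'(1)/\Gamma(1)=\int_0^\infty e^{-t}\log t\,dt=-\gamma$ and iterate the recursion $\psi(s+1)=1/s+\psi(s)$, which comes from logarithmic differentiation of $\Gamma(s+1)=s\Gamma(s)$; this gives $\psi(m)=-\gamma+\sum_{k=1}^{m-1}1/k$. For $\ell\geq 1$, I would start from the Weierstrass product
\[
\frac{1}{\Gamma(s)}=se^{\gamma s}\prod_{n=1}^{\infty}\left(1+\frac{s}{n}\right)e^{-s/n},
\]
take the logarithmic derivative to obtain $\psi(s)=-\gamma+\sum_{n=1}^{\infty}\bigl(1/n-1/(s+n-1)\bigr)$, and differentiate $\ell$ times term by term to get $\psi^{(\ell)}(s)=(-1)^{\ell+1}\ell!\sum_{k=1}^{\infty}(k+s-1)^{-\ell-1}$. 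Evaluating at $s=m$ and separating the first $m-1$ terms of the sum from the tail, which equals $\zeta(\ell+1)$ after reindexing, yields the stated closed form.

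The only non-cosmetic step is the termwise differentiation of the $\psi$-series, which is standard since the series converges locally uniformly on compact subsets of $\mathbb{C}\setminus\mathbb{Z}_{\leq 0}$, so all derivatives may be taken under the summation sign. Apart from this, the proof is a direct concatenation of Lemmas~\ref{lem:Cof} and \ref{lem:Com} with these classical identities for $\Gamma$ and $\psi$, so I do not anticipate any real obstacle.
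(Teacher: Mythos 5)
Your proposal is correct and follows essentially the same route as the paper: apply Lemma~\ref{lem:Cof} to $f=\Gamma$, $g=\psi$, expand the complete Bell polynomial via Lemma~\ref{lem:Com}, evaluate $\psi(m)$ from $\psi(1)=-\gamma$ and the recursion $\psi(s+1)=1/s+\psi(s)$, and obtain $\psi^{(\ell)}(m)$ by termwise differentiation of the series coming from the Weierstrass product. No gaps.
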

\begin{corollary}\label{cor:higherG1}
For $n\in \mathbb Z_{\geq 0}$, we  have
\[
\Gamma^{(n)}(1) = \sum_{\pi (n)} \frac{ (-1)^n n!}{k_1! k_2!\cdots k_n!} \left( \frac{\gamma}{1} \right)^{k_1} \left(
\frac{\zeta (2) }{2} \right)^{k_2}  \left(
\frac{\zeta (3) }{3} \right)^{k_3} \cdots \left( \frac{\zeta(n)}{n} \right)^{k_n}.
\]
\end{corollary}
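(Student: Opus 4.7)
The plan is to derive Corollary~\ref{cor:higherG1} as a direct specialization of Lemma~\ref{lem:higherG} at $m = 1$, reorganizing signs using the partition constraint.

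First I would substitute $m=1$ into the formula of Lemma~\ref{lem:higherG}. The prefactor $(m-1)! = 0! = 1$ disappears. For the individual psi-values, the $\ell = 0$ case of the lemma's formula gives $\psi(1) = -\gamma$ (the sum $\sum_{k=1}^{m-1}$ is empty), and for $\ell \ge 1$ the $\sum_{k=1}^{m-1} 1/k^{\ell+1}$ term vanishes, so
\[
\psi^{(\ell)}(1) = (-1)^{\ell+1} \ell!\, \zeta(\ell+1).
\]
Consequently, for each $j \ge 2$,
\[
\frac{\psi^{(j-1)}(1)}{j!} = \frac{(-1)^{j}(j-1)!\,\zeta(j)}{j!} = \frac{(-1)^{j}\zeta(j)}{j},
\]
while $\psi(1)/1! = -\gamma$.

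Next I would bookkeep the signs. Raising the above to powers $k_1, k_2, \ldots, k_n$ yields
\[
\Bigl(\tfrac{\psi(1)}{1!}\Bigr)^{k_1} \Bigl(\tfrac{\psi'(1)}{2!}\Bigr)^{k_2} \cdots \Bigl(\tfrac{\psi^{(n-1)}(1)}{n!}\Bigr)^{k_n}
= (-1)^{k_1} \gamma^{k_1} \prod_{j=2}^{n} (-1)^{j k_j} \Bigl(\tfrac{\zeta(j)}{j}\Bigr)^{k_j}.
\]
The key observation is that the defining constraint of $\pi(n)$, namely $k_1 + 2k_2 + \cdots + n k_n = n$, forces $\sum_{j=2}^{n} j k_j = n - k_1$. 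Therefore
\[
(-1)^{k_1} \prod_{j=2}^{n} (-1)^{j k_j} = (-1)^{k_1} (-1)^{n - k_1} = (-1)^{n},
\]
which is independent of the partition and can be factored out of the sum over $\pi(n)$.

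Combining these two observations and pulling the global sign $(-1)^n$ in front yields exactly the claimed expression. There is no real obstacle here — the argument is a routine substitution plus one line of sign arithmetic that is made clean by the partition constraint; the only point to watch is that the empty sums in the $\psi^{(\ell)}(1)$ formula are interpreted as zero so that $\psi^{(\ell)}(1)$ reduces to the clean form $(-1)^{\ell+1}\ell!\,\zeta(\ell+1)$ for $\ell \ge 1$.
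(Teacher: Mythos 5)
Your proof is correct and follows exactly the route the paper intends: Corollary~\ref{cor:higherG1} is the $m=1$ specialization of Lemma~\ref{lem:higherG}, with $\psi(1)=-\gamma$, $\psi^{(\ell)}(1)=(-1)^{\ell+1}\ell!\,\zeta(\ell+1)$, and the global sign $(-1)^n$ extracted via the constraint $k_1+2k_2+\cdots+nk_n=n$. The sign bookkeeping checks out against the tabulated values of $\Gamma^{(n)}(1)$, so nothing is missing.
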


Let $y$ be a positive rational number, and put $y=Y+u/v$ where $Y\in \mathbb Z_{\geq 0}$ and $u,v\in \mathbb Z,\ 1\leq  u \leq v$.
We have
\[
\zeta (s,y)=\zeta \left( s, Y+\frac{u}{v} \right) =\zeta \left(s,\frac{u}{v} \right)-\sum_{m=0}^{Y-1} \left( m+\frac{u}{v} \right)^{-s}.
\]
By differentiating, we have
\[
\zeta^{(j)} (s,y) = \zeta^{(j)} \left(s,\frac{u}{v} \right)-\sum_{m=0}^{Y-1} \left\{ -\log{\left(m+\frac{u}{v} \right)} \right\}^j \left(
m+\frac{u}{v} \right)^{-s}.
\]
From (\ref{eq:Leib}), we have
\begin{align}
\zeta^{(j)} (1-s,y)= &(-1)^j 2 \sum_{a+b+c=j \atop a,b,c >0} \left\{ \begin{pmatrix}
j \\
a,b,c \\
\end{pmatrix} \Gamma^{(a)} (s) \left( (2\pi v)^{-s} \right)^{(b)} H^{(c)} \left(s,\frac{u}{v} \right) \right\} \notag  \\
&  -\sum_{m=0}^{Y-1} \left\{ -\log{\left( m+\frac{u}{v} \right)} \right\}^j \left( m+\frac{u}{v} \right)^{s-1}. \label{eq:zeta^j}
\end{align}
Let $w_1=r_1/q_1,\ \ldots,\ w_N=r_N/q_N \ (r_i,q_i \in \mathbb Z_{>0} \ \text{for} \ i=1,2,\ldots,N)$ be positive rational numbers
with gcd$(r_i,q_i)=1$ for $i=1,2,\ldots,N$ and put
\[
w:= {\rm lcm}(r_1,\ldots,r_N) /{\rm gcd} (q_1,\ldots,q_N) ,\
\ell_1:=w/w_1, \ldots,\ell_N:=w/w_N.
\]
Furthermore, for a positive rational number $x$ and non-negative integers $k_1,\ldots$, $k_N$, we put
\begin{align*}
y(k_1,\ldots,k_N) & := w^{-1} (x+k_1w_1+\cdots+k_Nw_N) \\
& =Y(k_1,\ldots,k_N)+\frac{u(k_1,\ldots,k_N)}{v(k_1,\ldots,k_N)},
\end{align*}
where $Y(k_1,\ldots,k_N) \in \mathbb Z_{\geq 0}$ and $u(k_1,\ldots,k_N),\ v(k_1,\ldots,k_N) \in \mathbb Z$ with $1 \leq u(k_1,$ $\ldots, k_N) \leq v(k_1,\ldots ,k_N)$.
From (\ref{eq:higher-zeta0}) and (\ref{eq:zeta^j}), we have
\begin{align}
&\zeta_N^{(n)} (-s,x \ |\ w_1,\ldots, w_N) \\
&= \frac{1}{(N-1)!}
\sum_{j=0}^n \begin{pmatrix}
n\\
j\\
\end{pmatrix}  (-\log{w})^{n-j} w^s \sum_{k_1=0}^{\ell_1-1} \cdots \sum_{k_N=0}^{\ell_N-1} \sum_{k=0}^{N-1} \frac{C_{N,y(k_1,\ldots,k_N)}^{(k)} (0) }{k!} \notag\\
& \times
\left[ (-1)^j 2  \sum_{a+b+c=j \atop a,b,c>0} \left\{ \begin{pmatrix}
j\\
a,b,c\\
\end{pmatrix} \Gamma^{(a)} (s+k+1) \right. \right.  \notag \\
& \left.  \left. \times \left( (2\pi v(k_1,\ldots, k_N) )^{-(s+k+1)} \right)^{(b)} H^{(c)} \left( s+k+1, \frac{u(k_1,\ldots,k_N)}{v(k_1,\ldots,k_N)} \right) \right\} \right. \notag \\
& \left. -\sum_{m=0}^{Y(k_1,\ldots,k_N)-1} \left\{ -\log{\left( m+\frac{u(k_1,\ldots,k_N)}{v(k_1,\ldots,k_N)} \right) }\right\}^j \left( m+
\frac{u(k_1,\ldots,k_N)}{v(k_1,\ldots,k_N) } \right)^{s+k} \right] \label{eq:zeta_N^n}.
\end{align}

From  (\ref{eq:s=m}),  (\ref{eq:zeta_N^n}), and  Lemmas~\ref{lem:Hc}, \ref{lem:higherG}, we established the following theorem.
\begin{theorem}\label{theo:main}
Let $w_1=r_1/q_1,\ldots, w_N =r_N/q_N  \  (r_i,q_i \in \mathbb Z_{ >0}$ for $i=1,2, \ldots$, $N)$  be  positive  rational numbers
with ${\rm gcd}(r_i,q_i)=1$ for $i=1,\ldots,N$ and put
\[
w:= {\rm lcm}(r_1,\ldots,r_N) /{\rm gcd} (q_1,\ldots,q_N) ,\
\ell_1:=w/w_1, \ldots,\ell_N:=w/w_N.
\]
For any positive rational number $x$ and $\ell, n\in \mathbb Z_{\geq 0}$,
we have
\begin{align*}
& \zeta^{(n)}_N(-\ell ,x \ |\ w_1,\ldots,w_N) \\
& =\frac{1}{  (N-1)!}\sum_{j=0}^n \begin{pmatrix}
n\\
j\\
\end{pmatrix} (-\log{w})^{n-j} w^{\ell}   \sum_{k_1=0}^{\ell_1-1} \cdots \sum_{k_N=0}^{\ell_N-1}
\sum_{k=0}^{N-1} \frac{C^{(k)}_{N,y(k_1,\ldots,k_N)}(0)}{k!} \\
& \times  \left[ (-1)^j 2\sum_{a+b+c=j \atop  a,b,c \geq 0} \left\{ \begin{pmatrix}
j\\
a,b,c \\
\end{pmatrix}
 \Gamma^{(a)} (\ell+k+1) (-\log{(2\pi v(k_1,\ldots,k_N))})^b \right. \right. \\
 & \times
 \left. (2\pi v(k_1,\ldots,k_N))^{-(\ell+k+1)}  H^{(c)} \left(\ell+k+1, \frac{u(k_1,\ldots,k_N)}{v(k_1,\ldots,k_N)} \right) \right\} \\
 & \left. -  \sum_{m=0}^{Y(k_1,\ldots,k_N)-1} \left\{ -\log{ \left( m+\frac{u(k_1,\ldots,k_N)}{v(k_1,\ldots,k_N)} \right) } \right\}^j \left(
 m+\frac{u(k_1,\ldots,k_N)}{v(k_1,\ldots,k_N)} \right)^{\ell+k} \right],
\end{align*}
where
\begin{align*}
 y(k_1,\ldots,k_N) & := w^{-1} (x+k_1w_1+\cdots+k_Nw_N) \\
&  = Y (k_1,\ldots ,k_N)+u(k_1,\ldots,k_N)/v(k_1,\ldots,k_N),
\end{align*}
\begin{align*}
& Y(k_1,\ldots,k_N) \in \mathbb Z_{\geq 0}, \  u(k_1,\ldots,k_N), \ v(k_1,\ldots, k_N) \in \mathbb Z_{>0}, \\
& 1\leq u(k_1,\ldots, k_N) \leq v(k_1,\ldots, k_N),
\end{align*}
and $ H^{(c)}\left( \ell+k+1, u(k_1,\ldots, k_N)/v(k_1,\ldots, k_N) \right)$,
$\Gamma^{(a)}(\ell+k+1)$ are given by Lemmas \ref{lem:Hc}, \ref{lem:higherG}, respectively.
\end{theorem}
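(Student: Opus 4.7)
The plan is to assemble Theorem~\ref{theo:main} purely by combining pieces that are already in place, so the proof is essentially a careful bookkeeping exercise rather than a new analytic argument. I would begin from the identity~(\ref{eq:higher-zeta0}), which is obtained from Theorem~\ref{theo:BZ} by applying the Leibniz rule for the $n$-th derivative to $w^{-s} \zeta(s-k, y)$ and collecting the factors $(-\log w)^{n-j}$. This reduces the whole problem to evaluating $\zeta^{(j)}(s-k, y)$ at $s = -\ell$ with $y = y(k_1,\ldots,k_N) \in \mathbb{Q}_{>0}$, i.e.\ to computing $\zeta^{(j)}(-\ell - k, y)$ for every pair $(j,k)$ with $0 \le j \le n$ and $0 \le k \le N-1$.

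Next I would handle the $y$-dependence by writing $y = Y + u/v$ with $Y \in \mathbb{Z}_{\ge 0}$ and $1 \le u \le v$, so that for $\mathrm{Re}(s) > 1$
\[
\zeta(s, y) = \zeta\!\left(s, \tfrac{u}{v}\right) - \sum_{m=0}^{Y-1} \left(m + \tfrac{u}{v}\right)^{-s},
\]
and this identity persists under meromorphic continuation. Differentiating $j$ times in $s$ produces the formula that was recorded just above~(\ref{eq:zeta^j}), so I only need to deal with $\zeta^{(j)}\!\left(1-s, \tfrac{u}{v}\right)$ evaluated at $s = \ell + k + 1$, which is a positive integer $\ge 1$. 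Applying the generalized Leibniz rule~(\ref{eq:Leib}) to the Hurwitz functional equation~(\ref{eq:Feq}), the derivative $\zeta^{(j)}(1-s, u/v)$ expands as a triple sum over $(a,b,c)$ with $a+b+c = j$ involving $\Gamma^{(a)}(s)$, $((2\pi v)^{-s})^{(b)}$, and $H^{(c)}(s, u/v)$.

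Now I just substitute $s = \ell + k + 1$ into each of the three factors. The middle factor is immediate by~(\ref{eq:s=m}), and since $\ell + k + 1 \ge 1$ Lemma~\ref{lem:Hc} covers both the generic case $\ell + k + 1 \ge 2$ and the boundary case $\ell + k + 1 = 1$, the latter being the only place where the generalized Stieltjes constants $\gamma_i(k/v)$ and the anomalous term $\varepsilon_{c,u/v}$ appear (thanks to Lemma~\ref{lem:Entire}, which guarantees $H$ is entire). Lemma~\ref{lem:higherG} plugs in the value of $\Gamma^{(a)}(\ell + k + 1)$ via complete Bell polynomials in $\psi^{(\ell)}$. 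Substituting the resulting expression for $\zeta^{(j)}(-\ell - k, y(k_1,\ldots,k_N))$ back into~(\ref{eq:higher-zeta0}) gives exactly the formula claimed in Theorem~\ref{theo:main} (this is the content of~(\ref{eq:zeta_N^n})).

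The only real obstacle is notational: the nested sums over $k_1,\ldots,k_N$, $j$, $k$, $(a,b,c)$, and $m$ together with the case distinction $\ell + k + 1 = 1$ versus $\ell + k + 1 \ge 2$ must be written down without sign or index errors, and one has to verify that the decomposition $y(k_1,\ldots,k_N) = Y + u/v$ together with the removal of the finite sum $\sum_{m=0}^{Y-1}$ correctly matches the final term of the stated formula. There is no new analytic content beyond what is already established in Proposition~\ref{prop:BZ}, Theorem~\ref{theo:BZ}, and Lemmas~\ref{lem:Hc} and~\ref{lem:higherG}; the proof is the assembly of these ingredients.
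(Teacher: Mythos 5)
Your proposal is correct and follows the paper's own route exactly: reduce via \eqref{eq:higher-zeta0} to the values $\zeta^{(j)}(-\ell-k,y)$, split off the integer part $Y$ of $y$, apply the Leibniz expansion \eqref{eq:Leib} of the functional equation \eqref{eq:Feq} at $s=\ell+k+1$, and evaluate the three factors by \eqref{eq:s=m} and Lemmas~\ref{lem:Hc} and~\ref{lem:higherG}. This is precisely the assembly the paper carries out in \eqref{eq:zeta^j} and \eqref{eq:zeta_N^n}, so nothing further is needed.
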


Especially, considering $\ell=0$, we get the following result.
\begin{corollary}\label{cor:main}
Let $w_1=r_1/q_1,\ldots, w_N =r_N/q_N  \  (r_i,q_i \in \mathbb Z_{ >0}$ for $i=1,2, \ldots$, $N)$  be  positive  rational numbers
with ${\rm gcd}(r_i,q_i)=1$ for $i=1,\ldots,N$ and put
\[
w:= {\rm lcm}(r_1,\ldots,r_N) /{\rm gcd} (q_1,\ldots,q_N) ,\
\ell_1:=w/w_1, \ldots,\ell_N:=w/w_N.
\]
For any positive rational number $x$  and $n\in \mathbb Z_{\geq 0}$,
we have
\begin{align*}
& \zeta^{(n)}_N(0,x \ |\ w_1,\ldots,w_N) \\
& =\frac{1}{  (N-1)!}\sum_{j=0}^n \begin{pmatrix}
n\\
j\\
\end{pmatrix} (-\log{w})^{n-j}  \sum_{k_1=0}^{\ell_1-1} \cdots \sum_{k_N=0}^{\ell_N-1}
\sum_{k=0}^{N-1} \frac{C^{(k)}_{N,y(k_1,\ldots,k_N)}(0)}{k!} \\
& \times \left[ (-1)^j 2  \sum_{a+b+c=j \atop  a,b,c \geq 0} \left\{ \begin{pmatrix}
j\\
a,b,c \\
\end{pmatrix}
 \Gamma^{(a)} (k+1) (-\log{(2\pi v(k_1,\ldots,k_N))})^b \right. \right. \\
 & \left. \times  (2\pi v(k_1,\ldots,k_N))^{-(k+1)}  H^{(c)} \left( k+1, \frac{u(k_1,\ldots, k_N)}{v(k_1,\ldots, k_N)} \right) \right\}\\
 &  \left. -  \sum_{m=0}^{Y(k_1,\ldots,k_N)-1} \left\{ -\log{ \left( m+\frac{u(k_1,\ldots,k_N)}{v(k_1,\ldots,k_N)} \right) } \right\}^j \left(
 m+\frac{u(k_1,\ldots,k_N)}{v(k_1,\ldots,k_N)} \right)^{k} \right],
\end{align*}
where
\begin{align*}
 y(k_1,\ldots,k_N) & := w^{-1} (x+k_1w_1+\cdots+k_Nw_N) \\
&  = Y (k_1,\ldots ,k_N)+u(k_1,\ldots,k_N)/v(k_1,\ldots,k_N),
\end{align*}
\begin{align*}
& Y(k_1,\ldots,k_N) \in \mathbb Z_{\geq 0}, \  u(k_1,\ldots,k_N), \ v(k_1,\ldots, k_N) \in \mathbb Z_{>0}, \\
& 1\leq u(k_1,\ldots, k_N) \leq v(k_1,\ldots, k_N),
\end{align*}
and $ H^{(c)}\left( k+1, u(k_1,\ldots, k_N)/v(k_1,\ldots, k_N) \right)$,
$\Gamma^{(a)}(k+1)$ are given by Lemmas \ref{lem:Hc}, \ref{lem:higherG}, respectively.
\end{corollary}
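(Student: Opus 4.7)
The plan is to derive Corollary~\ref{cor:main} directly from Theorem~\ref{theo:main} by specialising to $\ell = 0$. Since the theorem already provides an explicit formula for $\zeta_N^{(n)}(-\ell, x \mid w_1,\ldots,w_N)$ for every $\ell \in \Z_{\geq 0}$ and every $n \in \Z_{\geq 0}$, the corollary is the instance $\ell = 0$, and my first step will be to substitute this value throughout. Under this substitution $w^{\ell}$ collapses to $1$, the arguments $\ell + k + 1$ appearing in $\Gamma^{(a)}(\ell+k+1)$, in the factor $(2\pi v(k_1,\ldots,k_N))^{-(\ell+k+1)}$, and in $H^{(c)}(\ell+k+1, \cdot)$ all become $k + 1$, and the exponent $\ell + k$ on $(m + u(k_1,\ldots,k_N)/v(k_1,\ldots,k_N))$ inside the finite correction sum becomes $k$. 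The remaining combinatorial factors $\binom{n}{j}$, $(-\log w)^{n-j}$, $C_{N,y(k_1,\ldots,k_N)}^{(k)}(0)/k!$, and $\binom{j}{a,b,c}$ are independent of $\ell$ and so are unchanged.

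I expect no genuine obstacle. All the substantive analytic work has been completed before the statement: Proposition~\ref{prop:BZ} together with Lemma~\ref{lem:zetaC} reduces $\zeta_N$ to a finite $\Q(x)$-linear combination of Hurwitz zeta functions; Leibniz's rule applied to the functional equation~\eqref{eq:Feq} produces~\eqref{eq:Leib}, which expresses $\zeta^{(j)}(1-s, u/v)$ in terms of derivatives of $\Gamma(s)$, of $(2\pi v)^{-s}$, and of $H(s, u/v)$; the translation identity $\zeta^{(j)}(s, y) = \zeta^{(j)}(s, u/v) - \sum_{m=0}^{Y-1}(-\log(m + u/v))^{j} (m + u/v)^{-s}$ absorbs the integral part $Y$ of $y(k_1,\ldots,k_N)$; and finally Lemmas~\ref{lem:Hc} and~\ref{lem:higherG} give the explicit closed forms of $H^{(c)}(m, u/v)$ and $\Gamma^{(a)}(m)$ at positive integers $m$ in terms of the generalised Stieltjes constants and values of the Riemann zeta function. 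All of these have already been assembled in formula~\eqref{eq:zeta_N^n} to yield Theorem~\ref{theo:main}.

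Hence my only task in the proof is to carry out the substitution $\ell = 0$ cleanly and verify that the notation and indexing match the statement of Corollary~\ref{cor:main}; in particular the decomposition $y(k_1,\ldots,k_N) = Y(k_1,\ldots,k_N) + u(k_1,\ldots,k_N)/v(k_1,\ldots,k_N)$ with $1 \leq u(k_1,\ldots,k_N) \leq v(k_1,\ldots,k_N)$ is preserved, as is the multinomial structure of the $(a,b,c)$ sum and the convention that the inner sum $\sum_{m=0}^{Y(k_1,\ldots,k_N)-1}$ is empty when $Y(k_1,\ldots,k_N)=0$. The formula displayed in the corollary is then obtained verbatim, and I would close the proof by remarking explicitly that the assertion is the $\ell = 0$ case of Theorem~\ref{theo:main}.
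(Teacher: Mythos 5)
Your proposal is correct and matches the paper exactly: the paper introduces Corollary~\ref{cor:main} with the single remark ``Especially, considering $\ell=0$, we get the following result,'' i.e.\ it is obtained by the same specialisation $\ell=0$ of Theorem~\ref{theo:main} that you carry out. Nothing further is needed.
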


The figures below show graphs of $\zeta_2(0,x \ |\ w_1,w_2 )$
with $x=1/10,1/20,1/30$ and $0<w_1 <1,\ 0<w_2<1$ drawn using
Corollary~\ref{cor:main}.

\begin{figure}[H]
\begin{center}  \vspace{-0.5cm}
\includegraphics[width=9.5cm]{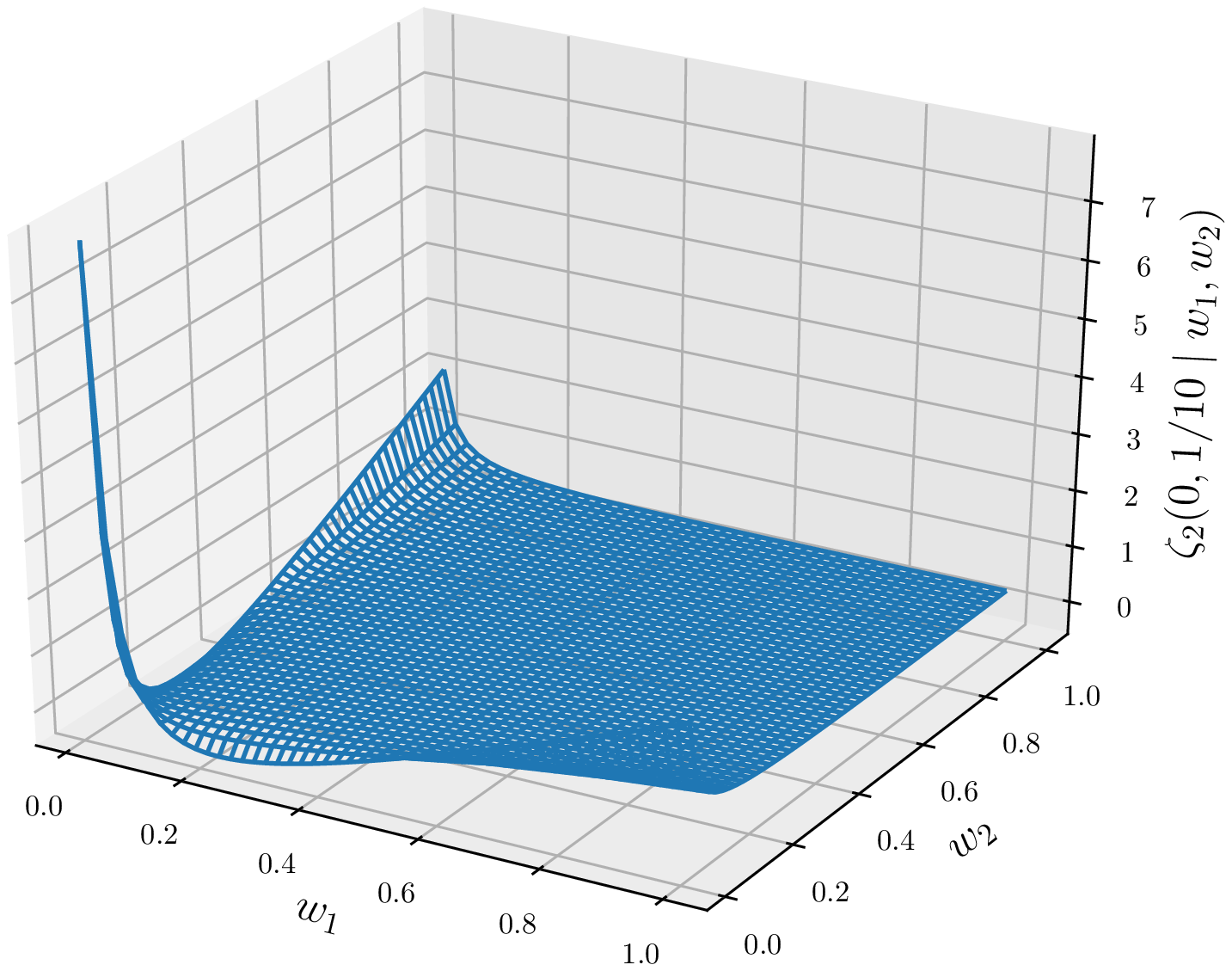}
\end{center}
\caption{$\zeta_2(0,1/10\ |\ w_1,w_2)$}
\label{fig:bzeta}
\end{figure}

\begin{figure}[H]
\begin{center}
\includegraphics[width=9.5cm]{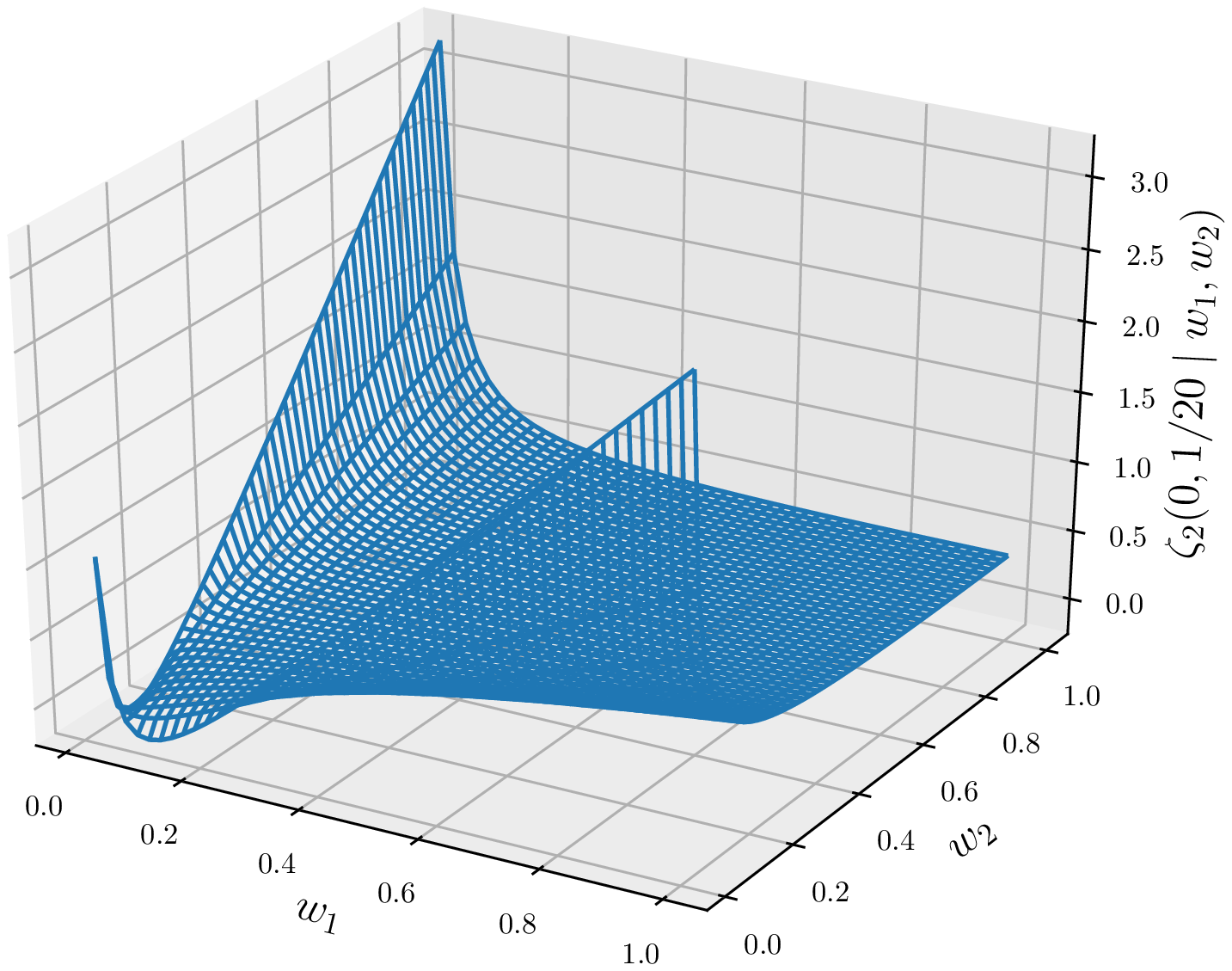}
\end{center}
\caption{$\zeta_2(0,1/20\ |\ w_1,w_2)$}
\label{fig:bzeta_2}
\end{figure}

\begin{figure}[H]
\begin{center}
\includegraphics[width=9.5cm]{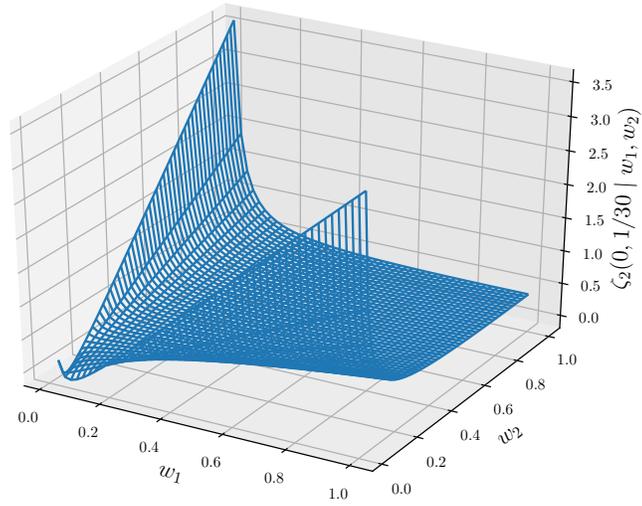}
\end{center}
\caption{$\zeta_2(0,1/30\ |\ w_1,w_2)$}
\label{fig:bzeta_3}
\end{figure}

The figures below show graphs of $\zeta_2'(0,x \ |\ w_1,w_2 )$
with $x=1/10,1/20,1/30$ and $0<w_1 <1,\ 0<w_2<1$ drawn using
Corollary~\ref{cor:main}.

\begin{figure}[H]
\begin{center}
\includegraphics[width=9.5cm]{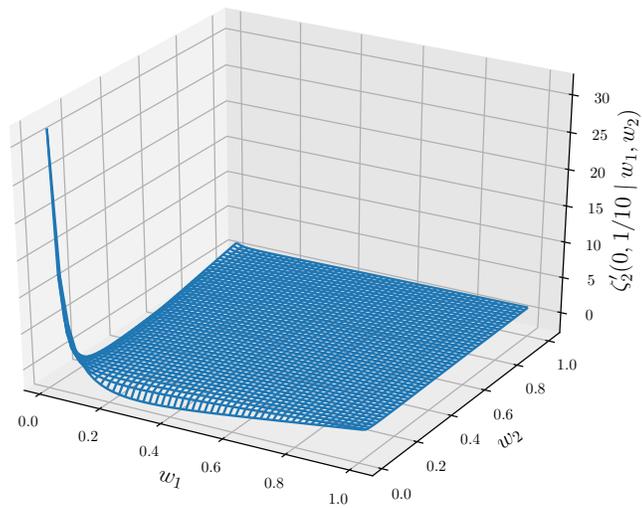}
\end{center}
\caption{$\zeta_2'(0,1/10\ |\ w_1,w_2)$}
\label{fig:bzeta'}
\end{figure}

\begin{figure}[H]
\begin{center}
\includegraphics[width=9.5cm]{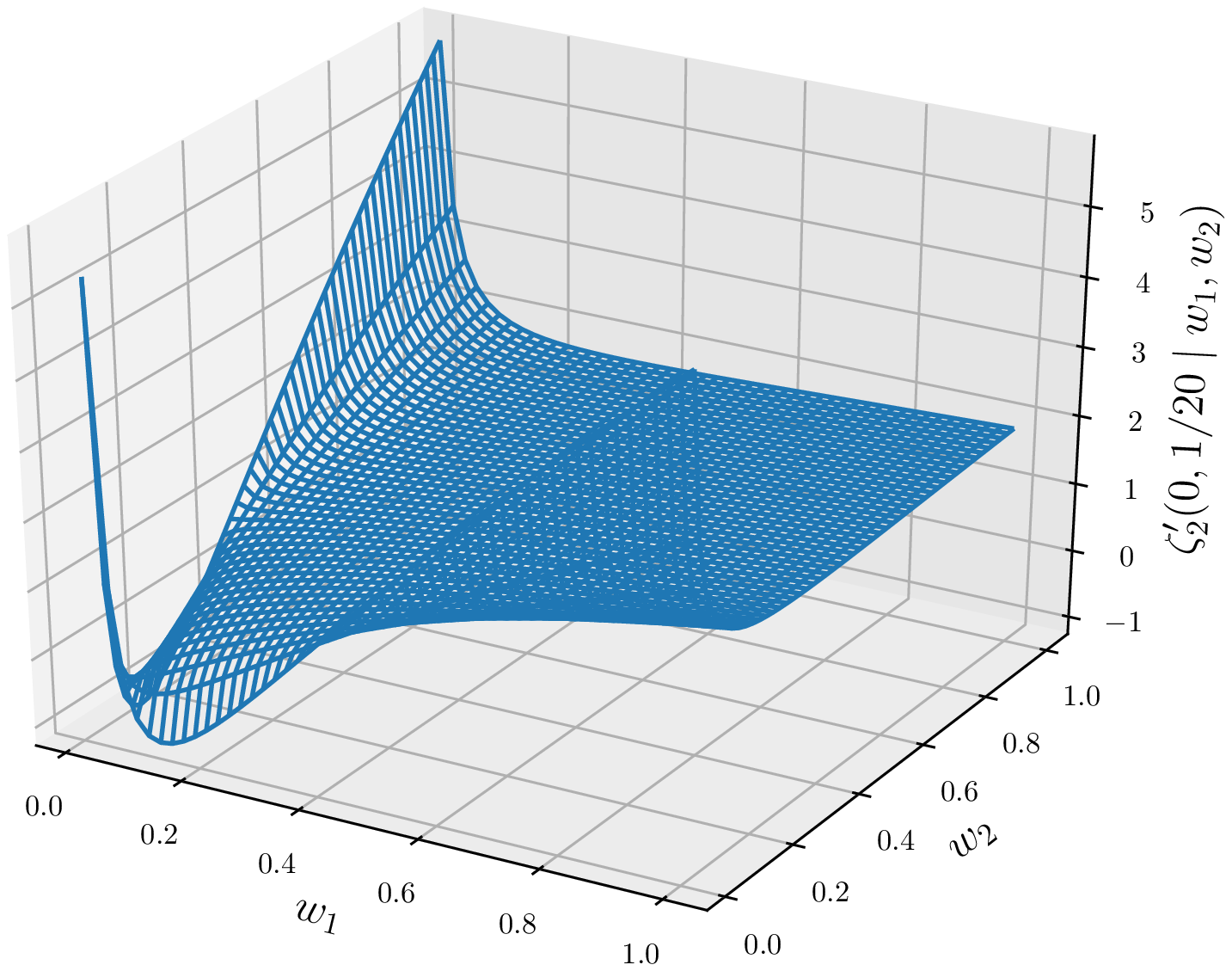}
\end{center}
\caption{$\zeta_2'(0,1/20\ |\ w_1,w_2)$}
\label{fig:bzeta_2'}
\end{figure}

\begin{figure}[H]
\begin{center}
\includegraphics[width=9.5cm]{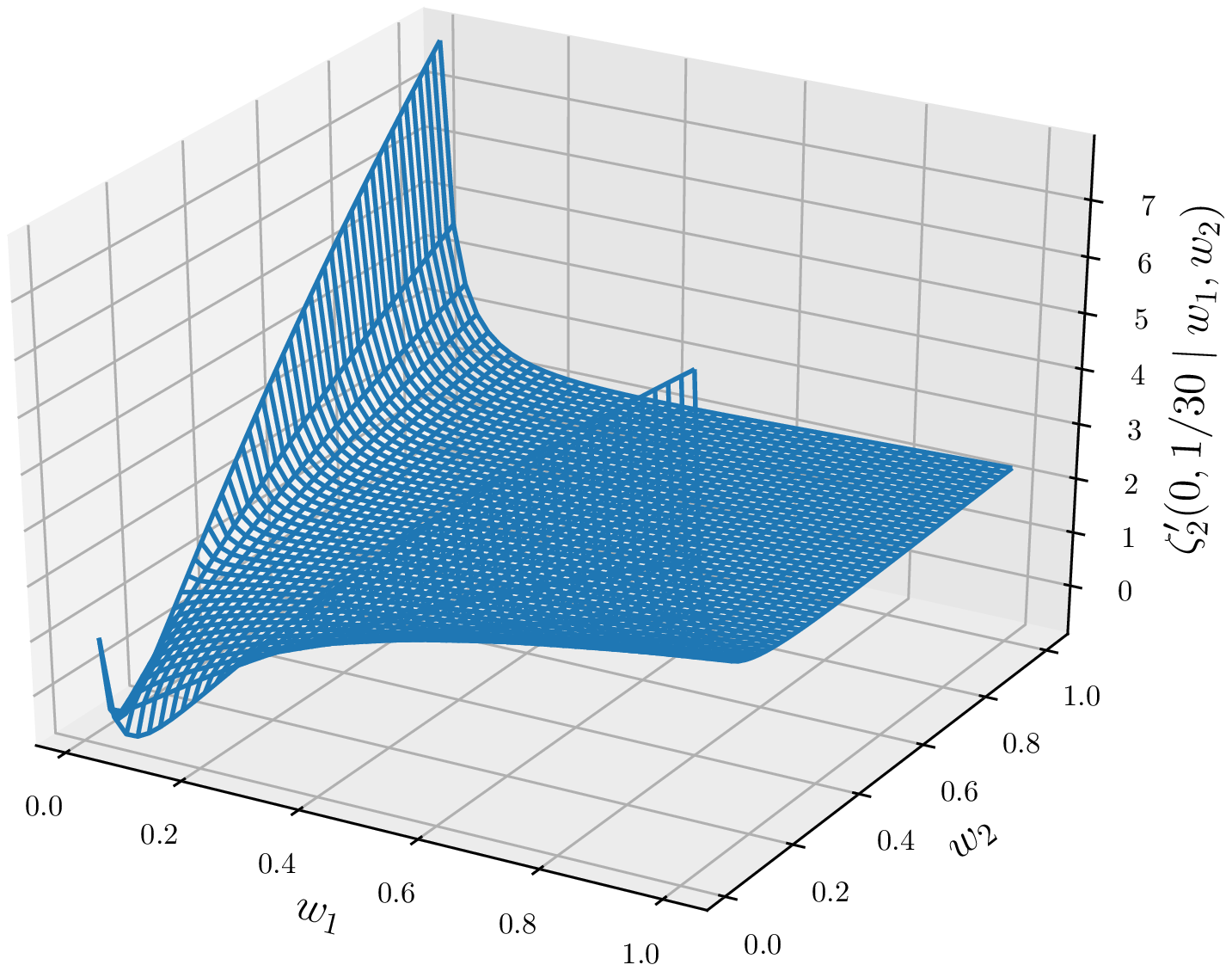}
\end{center}
\caption{$\zeta_2'(0,1/30\ |\ w_1,w_2)$}
\label{fig:bzeta_3'}
\end{figure}

By applying Corollary~\ref{cor:main} for $w_1=\cdots =w_N=1$, we get the values of $\zeta_N^{(n)} (0,x)=
\zeta_N^{(n)} (0,x \ | \ 1,\ldots,1)$.
\begin{corollary}\label{cor:zN=0}
For any positive rational number $x$, we put $x=X+u/v$ with $X\in \mathbb Z_{\geq 0},\ u,v\in \mathbb Z,\ 1\leq u\leq v$.
For  $n\in \mathbb Z_{\geq 0}$, we have
\begin{align*}
 &\zeta^{(n)}_N(0,x) =\frac{1}{(N-1)!}
\sum_{k=0}^{N-1} \frac{C^{(k)}_{N,x}(0)}{k!}  \\
&\times \left[ (-1)^n 2
 \sum_{a+b+c=n \atop  a,b,c \geq 0} \left\{ \begin{pmatrix}
n\\
a,b,c \\
\end{pmatrix}
 \Gamma^{(a)} (k+1) (-\log{(2\pi v)})^b  \right. \right. \\
 &  \left.  \left. \times(2\pi v)^{-(k+1)}
  H^{(c)} \left(k+1,\frac{u}{v} \right) \right\}
   -\sum_{m=0}^{X-1} \left\{ -\log{ \left(m+\frac{u}{v} \right)} \right\}^n \left(m+\frac{u}{v} \right)^k \right],
\end{align*}
where $ H^{(c)}(k+1, u/v)$
 and $\Gamma^{(a)}(k+1)$ are given by Lemmas \ref{lem:Hc}  and \ref{lem:higherG}, respectively.
\end{corollary}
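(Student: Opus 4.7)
The plan is to deduce Corollary~\ref{cor:zN=0} as a direct specialization of Corollary~\ref{cor:main}, taking $w_1=\cdots=w_N=1$. Since $\zeta_N(s,x)=\zeta_N(s,x\mid 1,\ldots,1)$ by definition, this is the legal substitution to make.

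First, I would compute the auxiliary quantities introduced in Corollary~\ref{cor:main} under this specialization. With $r_i=q_i=1$ for all $i$, we get $w={\rm lcm}(1,\ldots,1)/{\rm gcd}(1,\ldots,1)=1$, hence $\ell_i=w/w_i=1$ for every $i$, and $\log w=0$. Thus the outer sum over $j$ collapses: $(-\log w)^{n-j}=0$ unless $n-j=0$, so only the term $j=n$ survives (using the convention $0^{0}=1$). Similarly, each multi-sum $\sum_{k_i=0}^{\ell_i-1}$ reduces to the single term $k_1=\cdots=k_N=0$, so that $y(0,\ldots,0)=w^{-1}(x+0+\cdots+0)=x=X+u/v$, giving $Y(0,\ldots,0)=X$, $u(0,\ldots,0)=u$, $v(0,\ldots,0)=v$.

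Next, I would substitute these values into the formula of Corollary~\ref{cor:main}. The factor $w^{\ell}$ (present in Theorem~\ref{theo:main} but already set to $1$ in Corollary~\ref{cor:main} with $\ell=0$) becomes trivial, the binomial coefficient $\binom{n}{j}$ evaluates to $1$ at $j=n$, and the polynomial $C^{(k)}_{N,y(k_1,\ldots,k_N)}(0)$ becomes $C^{(k)}_{N,x}(0)$. What remains is exactly the expression displayed in Corollary~\ref{cor:zN=0}: the term $(-1)^n 2\sum_{a+b+c=n}\{\cdots\}$ coming from the functional equation part, and the finite correction $-\sum_{m=0}^{X-1}\{-\log(m+u/v)\}^n(m+u/v)^k$ coming from the translation between $\zeta(s,x)$ and $\zeta(s,u/v)$.

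There is no substantive obstacle here; the work is entirely one of careful symbolic bookkeeping, in particular checking that the conventions $0^0=1$ and the collapse of the $j$-sum are handled consistently, and that the definition of $y(k_1,\ldots,k_N)$ in Corollary~\ref{cor:main} reduces correctly to $x$ in the present case. Since $H^{(c)}$ and $\Gamma^{(a)}$ at positive integer arguments are already given explicitly in Lemmas~\ref{lem:Hc} and \ref{lem:higherG}, the resulting formula gives the values of $\zeta_N^{(n)}(0,x)$ in the stated closed form, and the corollary follows.
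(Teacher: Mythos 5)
Your proposal is correct and follows exactly the paper's route: the paper obtains Corollary~\ref{cor:zN=0} by the same specialization $w_1=\cdots=w_N=1$ of Corollary~\ref{cor:main}, with $w=1$, $\ell_i=1$, and the collapse of the $j$-sum to $j=n$ that you describe. Nothing further is needed.
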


Since $\zeta (s,x)=\zeta_1(s,x \ |\ 1)=\zeta_1(s,x)$ and $\zeta(s)=\zeta(s,1)$, we obtain the values at $s=0$ of higher order derivatives of the
Hurwitz zeta function and the Riemann zeta function.

\begin{corollary}\label{cor:Hur0}
For any positive rational number, we put  $x=X+u/v$ with $X\in \mathbb Z_{\geq 0},\ u,v\in \mathbb Z, \ 1\leq u\leq v$.
For $n\in \mathbb Z_{\geq 0}$, we have
\begin{align*}
 \zeta^{(n)}(0,x)  = &\frac{(-1)^n}{ \pi v }  \sum_{a+b+c=n \atop  a,b,c \geq 0} \left\{ \begin{pmatrix}
n\\
a,b,c \\
\end{pmatrix}
 \Gamma^{(a)} (1) (-\log{(2\pi v)})^b
  H^{(c)} \left(1,\frac{u}{v} \right) \right\} \\
  & - \sum_{m=0}^{X-1} \left\{ -\log{ \left( m+\frac{u}{v} \right)} \right\}^n,
\end{align*}
where
\[
\Gamma^{(a)}(1) = \sum_{\pi (a)} \frac{(-1)^a a!}{k_1! k_2!\cdots k_a!} \left( \frac{\gamma}{1} \right)^{k_1} \left(
\frac{\zeta (2) }{2} \right)^{k_2}  \left(
\frac{\zeta (3) }{3} \right)^{k_3} \cdots \left( \frac{\zeta (a) }{a} \right)^{k_a},
\]
and
\begin{align*}
  &H^{(c)}\left(1, \frac{u}{v} \right) \\
  &= \varepsilon_{c,\frac{u}{v}} +\sum_{k=1}^v \sum_{i=0}^c (-1)^{i+1} \begin{pmatrix}
  c\\
  i\\
  \end{pmatrix} \left( \frac{\pi}{2} \right)^{c-i} \sin{ \left( \frac{\pi (c-i)}{2} -\frac{2\pi ku}{v} \right)} \gamma_i \left( \frac{k}{v} \right),
\end{align*}
\begin{align*}
\varepsilon_{c,\frac{u}{v}} &=\begin{cases}
\displaystyle{ \frac{(-1)^{\frac{c}{2}+1} }{c+1} \left( \frac{\pi}{2} \right)^{c+1} v} & (c\in 2\mathbb Z,\ u=v), \\
& \\
0 & (\text{otherwise}).
\end{cases}
\end{align*}
\end{corollary}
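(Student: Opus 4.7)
The proof is essentially a direct specialization of Corollary~\ref{cor:zN=0} to $N = 1$, combined with the explicit formulas from Corollary~\ref{cor:higherG1} and Lemma~\ref{lem:Hc}. The plan is to use the identity $\zeta(s,x) = \zeta_1(s,x \mid 1) = \zeta_1(s,x)$, and then substitute $N = 1$ throughout the formula of Corollary~\ref{cor:zN=0}.

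First I would record the key simplification for $N = 1$: by definition, $C_1(t) = 1$, so $C_{1,x}(t) = 1$ and therefore $C^{(k)}_{1,x}(0) = 1$ when $k = 0$ and vanishes for $k \ge 1$. Consequently the inner sum over $k$ collapses to just its $k = 0$ term, and the prefactor $1/(N-1)! = 1$. The bracketed expression in Corollary~\ref{cor:zN=0} then becomes, with $k = 0$,
\begin{align*}
 & (-1)^n \cdot 2 \sum_{a+b+c = n \atop a,b,c \ge 0} \binom{n}{a,b,c} \Gamma^{(a)}(1) (-\log(2\pi v))^b (2\pi v)^{-1} H^{(c)}\!\left(1, \tfrac{u}{v}\right) \\
 & \qquad - \sum_{m = 0}^{X-1} \left\{-\log\!\left(m + \tfrac{u}{v}\right)\right\}^n \left(m + \tfrac{u}{v}\right)^{0}.
\end{align*}
Pulling the factor $2/(2\pi v) = 1/(\pi v)$ out of the sum, and noting that $(m + u/v)^0 = 1$, yields exactly the claimed expression for $\zeta^{(n)}(0,x)$.

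Next, to make the formula fully explicit I would cite the two auxiliary lemmas. The value of $\Gamma^{(a)}(1)$ is obtained directly from Corollary~\ref{cor:higherG1}, which gives the complete-Bell-polynomial expansion in terms of $\gamma$ and $\zeta(k)$ for $k = 2, \ldots, a$; this is what replaces the generic $\Gamma^{(a)}(\ell+k+1)$ of Lemma~\ref{lem:higherG} when the argument is $1$. The value of $H^{(c)}(1, u/v)$ is given by the $m = 1$ branch of Lemma~\ref{lem:Hc}, which introduces the generalized Stieltjes constants $\gamma_i(k/v)$ together with the correction term $\varepsilon_{c, u/v}$ that appears only in the case $c \in 2\mathbb{Z}$ and $u = v$. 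Substituting these into the displayed expression completes the statement.

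There is no serious obstacle beyond careful bookkeeping: the only thing to check is that the $k = 0, \ N = 1$ contribution from Corollary~\ref{cor:zN=0} reduces exactly to the prefactor $(-1)^n/(\pi v)$ and that the finite shift-sum $\sum_{m=0}^{X-1}$ survives unchanged because the exponent $\ell + k = 0$. Both are immediate, so the corollary follows as an unforced specialization of Corollary~\ref{cor:zN=0}.
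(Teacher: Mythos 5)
Your proposal is correct and follows exactly the paper's route: the corollary is obtained by specializing Corollary~\ref{cor:zN=0} to $N=1$ (so the $k$-sum reduces to its $k=0$ term with $C_{1,x}(t)=1$), pulling out $2(2\pi v)^{-1}=1/(\pi v)$, and then inserting the explicit values of $\Gamma^{(a)}(1)$ and $H^{(c)}(1,u/v)$ from Corollary~\ref{cor:higherG1} and the $m=1$ case of Lemma~\ref{lem:Hc}. The bookkeeping you describe is precisely what the paper leaves implicit.
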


\begin{example}\label{ex:zeta=0}
Let $u,v \in \mathbb Z$ with $1\leq u\leq v$.
\begin{align*}
  \zeta \left( 0,\frac{u}{v} \right) &= \begin{cases}
  -\dfrac{1}{2}  & (u=v) \\
  \displaystyle{  \frac{1}{\pi v} \sum_{k=1}^v \gamma_0 \left(\frac{k}{v} \right) \sin{ \left( \frac{2\pi ku}{v} \right) }  } & (u\ne v)
  \end{cases} \\
  & \\
  \zeta' \left( 0,\frac{u}{v} \right) &= \left\{
  \begin{array}{ll}
    \displaystyle{ -\dfrac{1}{2} (\gamma +\log{(2\pi v)})+\frac{1}{2v} \sum_{k=1}^v \gamma_0 \left( \frac{k}{v} \right) } & (u=v) \\
    \displaystyle{  \frac{1}{\pi v} \left\{ (\gamma+\log{(2\pi v)}) \sum_{k=1}^v \gamma_0 \left(\frac{k}{v} \right) \sin{ \left( \frac{2\pi ku}{v} \right) }   \right. } & \\
    \displaystyle{ \left. +\sum_{k=1}^v \frac{\pi}{2} \gamma_0 \left( \frac{k}{v} \right) \cos{ \left( \frac{2\pi ku}{v} \right)} +\gamma_1 \left( \frac{k}{v} \right)
    \sin{ \left(\frac{2\pi ku}{v} \right) } \right\}  } & (u\ne v)
  \end{array}
  \right.
\end{align*}
\end{example}

The equations $\zeta (0,x)=\frac{1}{2}-x$ and $\zeta' (0,x) \left( =\frac{ \partial}{ \partial s} \zeta (s,x) \vline_{\hspace{0.5pt} s=0} \right)
=\log{ \left( \frac{\Gamma (x) }{\sqrt{2\pi}} \right)}$ (\cite{L}) are known, but similar simple equations for the higher
order derivatives $\zeta^{(n)} (0,x) =\frac{\partial^n}{\partial s^n} \zeta (s,x) \vline_{s=0}$ are unknown.

\begin{corollary}\label{cor:Riemann0}
For any  $n\in \mathbb Z_{\geq 0}$, we have
\begin{align*}
 \zeta^{(n)}(0)  =\frac{(-1)^n}{ \pi  }  \sum_{a+b+c=n \atop  a,b,c \geq 0} \left\{ \begin{pmatrix}
n\\
a,b,c \\
\end{pmatrix}
 \Gamma^{(a)} (1) (-\log{(2\pi )})^b
  H^{(c)} (1,1) \right\},
\end{align*}
where
\[
\Gamma^{(a)}(1) = \sum_{\pi (a)} \frac{(-1)^a a!}{k_1! k_2!\cdots k_a!} \left( \frac{\gamma}{1} \right)^{k_1} \left(
\frac{\zeta (2) }{2} \right)^{k_2}  \left(
\frac{\zeta (3) }{3} \right)^{k_3} \cdots \left( \frac{\zeta (a) }{a} \right)^{k_a},
\]
and
\[
H^{(c)}(1,1)= \varepsilon_{c,1} + \sum_{i=0}^c (-1)^{i+1} \begin{pmatrix}
c\\
i\\
\end{pmatrix} \left( \frac{\pi}{2} \right)^{c-i}  \gamma_i \ \sin{  \frac{\pi (c-i)}{2}},
\]
\begin{align*}
\varepsilon_{c,1} &=\begin{cases}
\displaystyle{ \frac{(-1)^{\frac{c}{2}+1} }{c+1} \left( \frac{\pi}{2} \right)^{c+1} } & (c\in 2\mathbb Z),  \vspace{2mm} \\
0 & (c\not\in 2\mathbb Z).
\end{cases}
\end{align*}
\end{corollary}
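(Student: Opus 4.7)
The plan is to deduce Corollary~\ref{cor:Riemann0} as a direct specialization of Corollary~\ref{cor:Hur0}, using the identity $\zeta(s) = \zeta(s,1)$. Writing $x = 1$ in the form $x = X + u/v$ with $X \in \Z_{\geq 0}$ and $1 \leq u \leq v$ forces $X = 0$ and $u = v = 1$. I would simply feed this choice into the formula from Corollary~\ref{cor:Hur0} and simplify each ingredient.

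The simplifications go as follows. First, the finite "correction" sum $\sum_{m=0}^{X-1}\{\,-\log(m+u/v)\,\}^n$ is empty (since $X = 0$) and contributes $0$. Second, the prefactor $\tfrac{1}{\pi v}$ reduces to $\tfrac{1}{\pi}$, and $\log(2\pi v)$ reduces to $\log(2\pi)$. Third, in the expression
\[
H^{(c)}\!\left(1,\tfrac{u}{v}\right) = \varepsilon_{c,u/v} + \sum_{k=1}^{v} \sum_{i=0}^{c} (-1)^{i+1}\binom{c}{i}\left(\tfrac{\pi}{2}\right)^{c-i} \sin\!\left(\tfrac{\pi(c-i)}{2} - \tfrac{2\pi k u}{v}\right)\gamma_i\!\left(\tfrac{k}{v}\right),
\]
the outer sum $\sum_{k=1}^{v}$ collapses to the single term $k=1$, the angle $\tfrac{2\pi k u}{v}$ becomes $2\pi$, and $\sin(\theta - 2\pi) = \sin\theta$. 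Also $\gamma_i(k/v) = \gamma_i(1) = \gamma_i$ is the classical Stieltjes constant. Finally, the condition "$u = v$" defining the non-trivial branch of $\varepsilon_{c,u/v}$ is automatically satisfied, so $\varepsilon_{c,u/v}$ becomes $\varepsilon_{c,1}$ in the form asserted by the corollary.

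No further analytic input is needed: the expression for $\Gamma^{(a)}(1)$ is already the one provided by Corollary~\ref{cor:higherG1}, which is inherited verbatim through Corollary~\ref{cor:Hur0}. There is genuinely no obstacle of substance; the only thing to be careful about is the bookkeeping of the signs and of the trigonometric reduction $\sin(\theta - 2\pi) = \sin\theta$, together with verifying that $u = v$ is the correct (and only possible) case of $\varepsilon$ that can contribute when $x=1$. Once those routine checks are performed, the formula in Corollary~\ref{cor:Riemann0} follows immediately from Corollary~\ref{cor:Hur0}.
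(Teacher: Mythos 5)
Your proposal is correct and is exactly the paper's own route: the authors obtain Corollary~\ref{cor:Riemann0} by specializing Corollary~\ref{cor:Hur0} to $x=1$ (so $X=0$, $u=v=1$), with precisely the simplifications you list. All the routine checks you flag (the empty correction sum, $\sin(\theta-2\pi)=\sin\theta$, $\gamma_i(1)=\gamma_i$, and the $u=v$ branch of $\varepsilon_{c,u/v}$ with $v=1$) go through as you describe.
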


\begin{example}\label{ex:zeta}
\begin{align*} \vspace*{-0.8cm}
& \zeta (0)=-\frac{1}{2} \\
& \zeta'(0)=-\frac{1}{2} \log{(2\pi)} \\
& \zeta''(0)=\frac{\pi^3}{24}-\frac{1}{2} \log^2{(2\pi)}-\frac{1}{2} \zeta (2)+\frac{1}{2} \gamma^2-\gamma_1
\end{align*}
\end{example}

Apostol~\cite[Theorem~3]{A2} has given another formula for $\zeta^{(n)} (0)$ involving the
coefficients of the Laurent expansion of $\Gamma (s) \zeta (s)$ at $s=1$.

\begin{longtable}[H]{l}
 \caption{$\Gamma^{(n)}(s)$ for $n=0,\ldots ,10$}
 \label{table:dgamma-s}
 \endfirsthead
 \endhead
 \vspace{1mm}
 $\Gamma^{(0)}(s)  = \Gamma(s)$, \\ \vspace{1mm}
 $\Gamma^{(1)}(s) = \Gamma(s)\psi(s)$, \\ \vspace{1mm}
 $\Gamma^{(2)}(s) = \Gamma(s)(\psi(s)^{2} + \psi^{(1)}(s))$, \\ \vspace{1mm}
 $\Gamma^{(3)}(s) = \Gamma(s)(\psi(s)^{3} + 3\psi(s)\psi^{(1)}(s) + \psi^{(2)}(s))$, \\ \vspace{1mm}
 $\Gamma^{(4)}(s) = \Gamma(s)(\psi(s)^{4} + 6\psi(s)^{2}\psi^{(1)}(s) + 4\psi(s)\psi^{(2)}(s) + 3\psi^{(1)}(s)^{2} + \psi^{(3)}(s))$, \\ \vspace{1mm}
 $\Gamma^{(5)}(s)$ \\ \vspace{1mm}
 $= \Gamma(s)(\psi(s)^{5} + 10\psi(s)^{3}\psi^{(1)}(s) + 10\psi(s)^{2}\psi^{(2)}(s) + 15\psi(s)\psi^{(1)}(s)^{2} + 5\psi(s)\psi^{(3)}(s)$ \\ \vspace{1mm}
 $ + 10\psi^{(1)}(s)\psi^{(2)}(s) + \psi^{(4)}(s))$, \\ \vspace{1mm}
 $\Gamma^{(6)}(s)$ \\ \vspace{1mm}
 $= \Gamma(s)(\psi(s)^{6} + 15\psi(s)^{4}\psi^{(1)}(s) + 20\psi(s)^{3}\psi^{(2)}(s) + 45\psi(s)^{2}\psi^{(1)}(s)^{2}$ \\ \vspace{1mm}
 $+ 15\psi(s)^{2}\psi^{(3)}(s) + 60\psi(s)\psi^{(1)}(s)\psi^{(2)}(s) + 6\psi(s)\psi^{(4)}(s) + 15\psi^{(1)}(s)^{3}$ \\ \vspace{1mm}
 $ + 15\psi^{(1)}(s)\psi^{(3)}(s) + 10\psi^{(2)}(s)^{2} + \psi^{(5)}(s))$, \\ \vspace{1mm}
 $\Gamma^{(7)}(s)$ \\ \vspace{1mm}
 $= \Gamma(s)(\psi(s)^{7} + 21\psi(s)^{5}\psi^{(1)}(s) + 35\psi(s)^{4}\psi^{(2)}(s) + 105\psi(s)^{3}\psi^{(1)}(s)^{2}$ \\ \vspace{1mm}
 $+ 35\psi(s)^{3}\psi^{(3)}(s) + 210\psi(s)^{2}\psi^{(1)}(s)\psi^{(2)}(s) + 21\psi(s)^{2}\psi^{(4)}(s) + 105\psi(s)\psi^{(1)}(s)^{3}$ \\ \vspace{1mm}
 $+ 105\psi(s)\psi^{(1)}(s)\psi^{(3)}(s) + 70\psi(s)\psi^{(2)}(s)^{2} + 7\psi(s)\psi^{(5)}(s) + 105\psi^{(1)}(s)^{2}\psi^{(2)}(s)$ \\ \vspace{1mm}
 $+ 21\psi^{(1)}(s)\psi^{(4)}(s) + 35\psi^{(2)}(s)\psi^{(3)}(s) + \psi^{(6)}(s))$, \\ \vspace{1mm}
 $\Gamma^{(8)}(s)$ \\ \vspace{1mm}
 $= \Gamma(s)(\psi(s)^{8} + 28\psi(s)^{6}\psi^{(1)}(s) + 56\psi(s)^{5}\psi^{(2)}(s) + 210\psi(s)^{4}\psi^{(1)}(s)^{2}$ \\ \vspace{1mm}
 $+ 70\psi(s)^{4}\psi^{(3)}(s) + 560\psi(s)^{3}\psi^{(1)}(s)\psi^{(2)}(s) + 56\psi(s)^{3}\psi^{(4)}(s) + 420\psi(s)^{2}\psi^{(1)}(s)^{3}$ \\ \vspace{1mm}
 $+ 420\psi(s)^{2}\psi^{(1)}(s)\psi^{(3)}(s) + 280\psi(s)^{2}\psi^{(2)}(s)^{2} + 28\psi(s)^{2}\psi^{(5)}(s)$ \\ \vspace{1mm}
 $+ 840\psi(s)\psi^{(1)}(s)^{2}\psi^{(2)}(s) + 168\psi(s)\psi^{(1)}(s)\psi^{(4)}(s) + 280\psi(s)\psi^{(2)}(s)\psi^{(3)}(s)$ \\ \vspace{1mm}
 $+ 8\psi(s)\psi^{(6)}(s) + 105\psi^{(1)}(s)^{4} + 210\psi^{(1)}(s)^{2}\psi^{(3)}(s) + 280\psi^{(1)}(s)\psi^{(2)}(s)^{2}$ \\ \vspace{1mm}
 $+ 28\psi^{(1)}(s)\psi^{(5)}(s) + 56\psi^{(2)}(s)\psi^{(4)}(s) + 35\psi^{(3)}(s)^{2} + \psi^{(7)}(s))$, \\ \vspace{1mm}
 $\Gamma^{(9)}(s)$ \\ \vspace{1mm}
 $= \Gamma(s)(\psi(s)^{9} + 36\psi(s)^{7}\psi^{(1)}(s) + 84\psi(s)^{6}\psi^{(2)}(s) + 378\psi(s)^{5}\psi^{(1)}(s)^{2}$ \\ \vspace{1mm}
 $+ 126\psi(s)^{5}\psi^{(3)}(s) + 1260\psi(s)^{4}\psi^{(1)}(s)\psi^{(2)}(s) + 126\psi(s)^{4}\psi^{(4)}(s)$ \\ \vspace{1mm}
 $+ 1260\psi(s)^{3}\psi^{(1)}(s)^{3} + 1260\psi(s)^{3}\psi^{(1)}(s)\psi^{(3)}(s) + 840\psi(s)^{3}\psi^{(2)}(s)^{2}$ \\ \vspace{1mm}
 $+ 84\psi(s)^{3}\psi^{(5)}(s) + 3780\psi(s)^{2}\psi^{(1)}(s)^{2}\psi^{(2)}(s) + 756\psi(s)^{2}\psi^{(1)}(s)\psi^{(4)}(s)$ \\ \vspace{1mm}
 $+ 1260\psi(s)^{2}\psi^{(2)}(s)\psi^{(3)}(s) + 36\psi(s)^{2}\psi^{(6)}(s) + 945\psi(s)\psi^{(1)}(s)^{4}$ \\ \vspace{1mm}
 $+ 1890\psi(s)\psi^{(1)}(s)^{2}\psi^{(3)}(s) + 2520\psi(s)\psi^{(1)}(s)\psi^{(2)}(s)^{2} + 252\psi(s)\psi^{(1)}(s)\psi^{(5)}(s)$ \\ \vspace{1mm}
 $+ 504\psi(s)\psi^{(2)}(s)\psi^{(4)}(s) + 315\psi(s)\psi^{(3)}(s)^{2} + 9\psi(s)\psi^{(7)}(s) + 1260\psi^{(1)}(s)^{3}\psi^{(2)}(s)$ \\ \vspace{1mm}
 $+ 378\psi^{(1)}(s)^{2}\psi^{(4)}(s) + 1260\psi^{(1)}(s)\psi^{(2)}(s)\psi^{(3)}(s) + 36\psi^{(1)}(s)\psi^{(6)}(s) + 280\psi^{(2)}(s)^{3}$ \\ \vspace{1mm}
 $+ 84\psi^{(2)}(s)\psi^{(5)}(s) + 126\psi^{(3)}(s)\psi^{(4)}(s) + \psi^{(8)}(s))$, \\ \vspace{1mm}
 $\Gamma^{(10)}(s)$ \\ \vspace{1mm}
 $= \Gamma(s)(\psi(s)^{10} + 45\psi(s)^{8}\psi^{(1)}(s) + 120\psi(s)^{7}\psi^{(2)}(s) + 630\psi(s)^{6}\psi^{(1)}(s)^{2}$ \\ \vspace{1mm}
 $+ 210\psi(s)^{6}\psi^{(3)}(s) + 2520\psi(s)^{5}\psi^{(1)}(s)\psi^{(2)}(s) + 252\psi(s)^{5}\psi^{(4)}(s)$ \\ \vspace{1mm}
 $+ 3150\psi(s)^{4}\psi^{(1)}(s)^{3} + 3150\psi(s)^{4}\psi^{(1)}(s)\psi^{(3)}(s) + 2100\psi(s)^{4}\psi^{(2)}(s)^{2}$ \\ \vspace{1mm}
 $+ 210\psi(s)^{4}\psi^{(5)}(s) + 12600\psi(s)^{3}\psi^{(1)}(s)^{2}\psi^{(2)}(s) + 2520\psi(s)^{3}\psi^{(1)}(s)\psi^{(4)}(s)$ \\ \vspace{1mm}
 $+ 4200\psi(s)^{3}\psi^{(2)}(s)\psi^{(3)}(s) + 120\psi(s)^{3}\psi^{(6)}(s) + 4725\psi(s)^{2}\psi^{(1)}(s)^{4}$ \\ \vspace{1mm}
 $+ 9450\psi(s)^{2}\psi^{(1)}(s)^{2}\psi^{(3)}(s) + 12600\psi(s)^{2}\psi^{(1)}(s)\psi^{(2)}(s)^{2}$ \\ \vspace{1mm}
 $+ 1260\psi(s)^{2}\psi^{(1)}(s)\psi^{(5)}(s) + 2520\psi(s)^{2}\psi^{(2)}(s)\psi^{(4)}(s) + 1575\psi(s)^{2}\psi^{(3)}(s)^{2}$ \\ \vspace{1mm}
 $+ 45\psi(s)^{2}\psi^{(7)}(s) + 12600\psi(s)\psi^{(1)}(s)^{3}\psi^{(2)}(s) + 3780\psi(s)\psi^{(1)}(s)^{2}\psi^{(4)}(s)$ \\ \vspace{1mm}
 $+ 12600\psi(s)\psi^{(1)}(s)\psi^{(2)}(s)\psi^{(3)}(s) + 360\psi(s)\psi^{(1)}(s)\psi^{(6)}(s) + 2800\psi(s)\psi^{(2)}(s)^{3}$ \\ \vspace{1mm}
 $+ 840\psi(s)\psi^{(2)}(s)\psi^{(5)}(s) + 1260\psi(s)\psi^{(3)}(s)\psi^{(4)}(s) + 10\psi(s)\psi^{(8)}(s) + 945\psi^{(1)}(s)^{5}$ \\ \vspace{1mm}
 $+ 3150\psi^{(1)}(s)^{3}\psi^{(3)}(s) + 6300\psi^{(1)}(s)^{2}\psi^{(2)}(s)^{2} + 630\psi^{(1)}(s)^{2}\psi^{(5)}(s)$ \\ \vspace{1mm}
 $+ 2520\psi^{(1)}(s)\psi^{(2)}(s)\psi^{(4)}(s) + 1575\psi^{(1)}(s)\psi^{(3)}(s)^{2} + 45\psi^{(1)}(s)\psi^{(7)}(s)$ \\ \vspace{1mm}
 $+ 2100\psi^{(2)}(s)^{2}\psi^{(3)}(s) + 120\psi^{(2)}(s)\psi^{(6)}(s) + 210\psi^{(3)}(s)\psi^{(5)}(s) + 126\psi^{(4)}(s)^{2}$ \\ \vspace{1mm}
 $+ \psi^{(9)}(s))$. \vspace{1mm}
\end{longtable}

\begin{longtable}[H]{l}
\caption{$\Gamma^{(n)}(1)$ for $n=0,\ldots , 10$}
\label{table:dgamma}
\endfirsthead
\endhead
\vspace{1mm}
$\Gamma^{(0)}(1) = 1$, \\ \vspace{1mm}
$\Gamma^{(1)}(1) = -\gamma$, \\ \vspace{1mm}
$\Gamma^{(2)}(1) = \gamma^{2} + \zeta(2)$, \\ \vspace{1mm}
$\Gamma^{(3)}(1) = -\gamma^{3} - 3\gamma\zeta(2) - 2\zeta(3)$, \\ \vspace{1mm}
$\Gamma^{(4)}(1) = \gamma^{4} + 6\gamma^{2}\zeta(2) + 8\gamma\zeta(3) + 3\zeta(2)^{2} + 6\zeta(4)$, \\ \vspace{1mm}
$\Gamma^{(5)}(1)$ \\ \vspace{1mm}
$= -\gamma^{5} - 10\gamma^{3}\zeta(2) - 20\gamma^{2}\zeta(3) - 15\gamma\zeta(2)^{2} - 30\gamma\zeta(4) - 20\zeta(2)\zeta(3) - 24\zeta(5)$, \\ \vspace{1mm}
$\Gamma^{(6)}(1)$ \\ \vspace{1mm}
$= \gamma^{6} + 15\gamma^{4}\zeta(2) + 40\gamma^{3}\zeta(3) + 45\gamma^{2}\zeta(2)^{2} + 90\gamma^{2}\zeta(4) + 120\gamma\zeta(2)\zeta(3)$ \\ \vspace{1mm}
$+ 144\gamma\zeta(5) + 15\zeta(2)^{3} + 90\zeta(2)\zeta(4) + 40\zeta(3)^{2} + 120\zeta(6)$, \\ \vspace{1mm}
$\Gamma^{(7)}(1)$ \\ \vspace{1mm}
$= -\gamma^{7} - 21\gamma^{5}\zeta(2) - 70\gamma^{4}\zeta(3) - 105\gamma^{3}\zeta(2)^{2} - 210\gamma^{3}\zeta(4)$ \\ \vspace{1mm}
$- 420\gamma^{2}\zeta(2)\zeta(3) - 504\gamma^{2}\zeta(5) - 105\gamma\zeta(2)^{3} - 630\gamma\zeta(2)\zeta(4) - 280\gamma\zeta(3)^{2}$ \\ \vspace{1mm}
$- 840\gamma\zeta(6) - 210\zeta(2)^{2}\zeta(3) - 504\zeta(2)\zeta(5) - 420\zeta(3)\zeta(4) - 720\zeta(7)$, \\ \vspace{1mm}
$\Gamma^{(8)}(1)$ \\ \vspace{1mm}
$= \gamma^{8} + 28\gamma^{6}\zeta(2) + 112\gamma^{5}\zeta(3) + 210\gamma^{4}\zeta(2)^{2} + 420\gamma^{4}\zeta(4) + 1120\gamma^{3}\zeta(2)\zeta(3)$ \\ \vspace{1mm}
$+ 1344\gamma^{3}\zeta(5) + 420\gamma^{2}\zeta(2)^{3} + 2520\gamma^{2}\zeta(2)\zeta(4) + 1120\gamma^{2}\zeta(3)^{2} + 3360\gamma^{2}\zeta(6)$ \\ \vspace{1mm}
$+ 1680\gamma\zeta(2)^{2}\zeta(3) + 4032\gamma\zeta(2)\zeta(5) + 3360\gamma\zeta(3)\zeta(4) + 5760\gamma\zeta(7) + 105\zeta(2)^{4}$ \\ \vspace{1mm}
$+ 1260\zeta(2)^{2}\zeta(4) + 1120\zeta(2)\zeta(3)^{2} + 3360\zeta(2)\zeta(6) + 2688\zeta(3)\zeta(5)$ \\ \vspace{1mm}
$+ 1260\zeta(4)^{2} + 5040\zeta(8)$, \\ \vspace{1mm}
$\Gamma^{(9)}(1)$ \\ \vspace{1mm}
$= -\gamma^{9} - 36\gamma^{7}\zeta(2) - 168\gamma^{6}\zeta(3) - 378\gamma^{5}\zeta(2)^{2} - 756\gamma^{5}\zeta(4)$ \\ \vspace{1mm}
$ - 2520\gamma^{4}\zeta(2)\zeta(3) - 3024\gamma^{4}\zeta(5) - 1260\gamma^{3}\zeta(2)^{3} - 7560\gamma^{3}\zeta(2)\zeta(4) - 3360\gamma^{3}\zeta(3)^{2}$ \\ \vspace{1mm}
$ - 10080\gamma^{3}\zeta(6) - 7560\gamma^{2}\zeta(2)^{2}\zeta(3) - 18144\gamma^{2}\zeta(2)\zeta(5) - 15120\gamma^{2}\zeta(3)\zeta(4)$ \\ \vspace{1mm}
$- 25920\gamma^{2}\zeta(7) - 945\gamma\zeta(2)^{4} - 11340\gamma\zeta(2)^{2}\zeta(4) - 10080\gamma\zeta(2)\zeta(3)^{2}$ \\ \vspace{1mm}
$- 30240\gamma\zeta(2)\zeta(6) - 24192\gamma\zeta(3)\zeta(5) - 11340\gamma\zeta(4)^{2} - 45360\gamma\zeta(8)$ \\ \vspace{1mm}
$- 2520\zeta(2)^{3}\zeta(3) - 9072\zeta(2)^{2}\zeta(5) - 15120\zeta(2)\zeta(3)\zeta(4) - 25920\zeta(2)\zeta(7)$ \\ \vspace{1mm}
$- 2240\zeta(3)^{3} - 20160\zeta(3)\zeta(6) - 18144\zeta(4)\zeta(5) - 40320\zeta(9)$, \\ \vspace{1mm}
$\Gamma^{(10)}(1)$ \\ \vspace{1mm}
$= \gamma^{10} + 45\gamma^{8}\zeta(2) + 240\gamma^{7}\zeta(3) + 630\gamma^{6}\zeta(2)^{2} + 1260\gamma^{6}\zeta(4) + 5040\gamma^{5}\zeta(2)\zeta(3)$ \\ \vspace{1mm}
$ + 6048\gamma^{5}\zeta(5) + 3150\gamma^{4}\zeta(2)^{3} + 18900\gamma^{4}\zeta(2)\zeta(4) + 8400\gamma^{4}\zeta(3)^{2} + 25200\gamma^{4}\zeta(6)$ \\ \vspace{1mm}
$ + 25200\gamma^{3}\zeta(2)^{2}\zeta(3) + 60480\gamma^{3}\zeta(2)\zeta(5) + 50400\gamma^{3}\zeta(3)\zeta(4) + 86400\gamma^{3}\zeta(7)$ \\ \vspace{1mm}
$+ 4725\gamma^{2}\zeta(2)^{4} + 56700\gamma^{2}\zeta(2)^{2}\zeta(4) + 50400\gamma^{2}\zeta(2)\zeta(3)^{2} + 151200\gamma^{2}\zeta(2)\zeta(6)$ \\ \vspace{1mm}
$+ 120960\gamma^{2}\zeta(3)\zeta(5) + 56700\gamma^{2}\zeta(4)^{2}+ 226800\gamma^{2}\zeta(8) + 25200\gamma\zeta(2)^{3}\zeta(3)$ \\ \vspace{1mm}
$+ 90720\gamma\zeta(2)^{2}\zeta(5) + 151200\gamma\zeta(2)\zeta(3)\zeta(4) + 259200\gamma\zeta(2)\zeta(7) + 22400\gamma\zeta(3)^{3}$ \\ \vspace{1mm}
$+ 201600\gamma\zeta(3)\zeta(6) + 181440\gamma\zeta(4)\zeta(5) + 403200\gamma\zeta(9) + 945\zeta(2)^{5} + 18900\zeta(2)^{3}\zeta(4)$ \\ \vspace{1mm}
$+ 25200\zeta(2)^{2}\zeta(3)^{2} + 75600\zeta(2)^{2}\zeta(6) + 120960\zeta(2)\zeta(3)\zeta(5) + 56700\zeta(2)\zeta(4)^{2}$ \\ \vspace{1mm}
$+ 226800\zeta(2)\zeta(8) + 50400\zeta(3)^{2}\zeta(4) + 172800\zeta(3)\zeta(7) + 151200\zeta(4)\zeta(6)$ \\ \vspace{1mm}
$+ 72576\zeta(5)^{2}+ 362880\zeta(10)$. \vspace{1mm}
\end{longtable}
\noindent
{\bf Acknowledgements}
The authors would like to thank Professor Shinya Koyama and Professor  Nobushige Kurokawa for useful advice.
They also express their gratitude to the referee for suggesting the addition of the generalizetion of Kummer's formula in \S \ref{sec:Kummer}.

\end{document}